\def\XXint#1#2#3{{\setbox0=\hbox{$#1{#2#3}{\int}$ }
\vcenter{\hbox{$#2#3$ }}\kern-.6\wd0}}
\def\II{{\rm I\kern-0.5exI}}
\def\III{{\rm I\kern-0.5exI\kern-0.5exI}}
\newcommand{\norm}[1]{\lVert #1 \rVert}
\newcommand{\RR}{\mathbb{R}}
\newcommand{\gi}{\frac{1}{\gamma}}
\newcommand{\ZZ}{\mathbb{Z}}
\DeclareMathOperator*{\argmin}{argmin}
\DeclareMathOperator{\loc}{\textup{loc}}
\DeclareSymbolFont{bbold}{U}{bbold}{m}{n}
\DeclareSymbolFontAlphabet{\mathbbold}{bbold}
\newcommand{\vp}{\varphi}
\newcommand{\spt}{\textup{spt}}
\numberwithin{equation}{section}
\newtheorem{theorem}{Theorem}[section]
\newtheorem{lemma}[theorem]{Lemma}
\newtheorem{prop}[theorem]{Proposition}
\theoremstyle{remark}
\newtheorem{remark}[theorem]{Remark}
\theoremstyle{definition}
\newtheorem{definition}[theorem]{Definition}
\author{Matt Jacobs}
\address{Department of Mathematics, 
Purdue University, 
Mathematical Sciences Bldg, 150 N University St, West Lafayette, IN 47907}
\email{jacob225@purdue.edu}
\title{Lagrangian solutions to the Porous Media Equation and reaction diffusion systems}
\keywords{Lagrangian solutions, Porous Media Equation, Tumor growth model, Mixing}
\subjclass[2020]{Primary 76S99; Secondary 35M31.}
\begin{document}
\maketitle

\begin{abstract}
In this paper, we construct global-in-time forward and backward Lagrangian flow maps along the pressure gradient generated by weak solutions of the Porous Media Equation.  The main difficulty is that when the initial data has compact support, it is well-known that the pressure gradient is not a BV function.  Thus, the theory of regular Lagrangian flows cannot be applied to construct the flow maps.  To overcome this difficulty, we develop a new argument that combines Aronson-B\'enilan type estimates   with the quantitative Lagrangian flow theory of Crippa and De Lellis to show that certain doubly logarithmic quantities measuring the stability of flow maps do not blow up fast enough to prevent compactness.
Our arguments are sufficiently flexible to handle the Hele-Shaw limit and a multispecies generalization of the Porous Media Equation where the equation is replaced by a coupled hyperbolic-parabolic system of reaction diffusion equations.  As one application of our flow maps, we are able to construct solutions where different species cannot mix together if they were separated at initial time.

\end{abstract}

\section{Introduction}
The Porous Media Equation (PME) is a non-linear analogue of the heat equation that has various important physical applications \cite{vazquez_book}.  PME describes the evolution of a density $\rho$ according to Darcy's law, where the pressure function $p$ is coupled to the density through a convex function $e$.  If we also assume the presence of a growth/decay term $G$, the equation takes the form   
\begin{equation}\label{eq:pme_intro}
\partial_t \rho-\nabla \cdot (\rho\nabla p)=\rho G, \quad \rho p=e(\rho)+e^*(p),
\end{equation}
where $e^*$ is the convex conjugate of $e$.
Here we will focus on the classical coupling $p=\rho^{\gamma}$ for some parameter $\gamma\geq 1$ (corresponding to $e(\rho)=\frac{1}{\gamma+1}\rho^{\gamma+1}$), as well as the incompressible limit $\gamma\to\infty$, where the pressure-density relation degenerates to the implicit condition $p(1-\rho)=0$. 

Although PME is one of the most well-known examples of a non-linear parabolic PDE, to the best of our knowledge, the construction of Lagrangian flow maps along the pressure gradient has only been accomplished in very limited settings (in one dimension, in higher dimensions with radial symmetry, or when the pressure is known to have good regularity properties) \cite{shmarev_book, shmarev_vazquez, shmarev_seg_solutions}.
The main difficulty is that when the initial density $\rho^0$ is not bounded away from zero, the pressure gradient does not belong to $BV$ regardless of the smoothness of $\rho^0$. Indeed, one can only guarantee that $\Delta p$ is a singular measure \cite{vazquez_book}.  As a result, the pressure gradient has insufficient regularity to apply the theory of regular Lagrangian flows \cite{diperna_lions, ambrosio_bv}, despite the many recent advances and reformulations of the theory \cite{ambrosio_2008, crippa_delellis, jabin_flows, bouchut_crippa, nguyen_flow} to name just a few. 

Let us emphasize that in \cite{nguyen_flow}, the author constructs a vector field whose derivatives can be written as singular integrals of a Borel measure (i.e. essentially the same regularity that one expects for the pressure gradient in PME) where the regular Lagrangian flow theory fails and there is nonuniqueness of flow maps. Thus, to construct Lagrangian flows along the pressure gradient of PME, one cannot appeal to a general theory of flow maps, some particular properties of PME must be exploited.  

Unsurprisingly, the pressure gradient in PME has much more structure than an arbitrary vector field with poor regularity.  To understand this better, we can use the relation
 $p=\rho^{\gamma}$ to rewrite (\ref{eq:pme_intro}) in terms of $p$, leading to the  equivalent formulation
\begin{equation}\label{eq:pme_pressure}
    \partial_t p-|\nabla p|^2-\gamma p(\Delta p+G)=0.
\end{equation}
From (\ref{eq:pme_pressure}), one can see that PME is a degenerate parabolic equation, whose second order irregularities must occur in the vicinity of the level set $\{p=0\}$.  Indeed, it is known that quantities of the form $\int_{[0,T]\times\RR^d}p|D^2p|^2$  are finite under rather general assumptions on the structure of $G$
\cite{pqm, gpsg, perthame_david}. 
  Hence, there is hope to construct Lagrangian flows provided that one can show that most trajectories stay away from $\{p=0\}$.
  
  From a heuristic perspective, it is not so clear what should happen here.  Interpreting PME through the lens of the JKO scheme \cite{JKO1998variational, otto2001pme}, each particle attempts to move to a new location where the pressure is lower while not travelling too far.  On the other hand, the particles are carrying mass, thus, as they travel to regions of lower pressure, one expects that the pressure itself will rise.  Because of these competing phenomena, it unclear whether one should expect the pressure to increase or decrease along particle trajectories.

To gain a better understanding of what is happening, let us assume formally that $X$ is a Lagrangian flow map satisfying the flow equation $\partial_t X=-\nabla p\circ X$.   If we differentiate the pressure with respect to time along particle trajectories, we discover that 
\begin{equation}\label{eq:formal_trajectory_calculation}
\frac{d}{dt}\big( p\circ X)=\big( \partial_t p-|\nabla p|^2\big)\circ X=\big(\gamma p(\Delta p+G)\big)\circ X.
\end{equation}
Hence, the value of the pressure along trajectories is essentially controlled by the quantity $\gamma(\Delta p+G)$.  
In the most classical case, $G=0$, nearly uniform bounds on the negative part of $\gamma(\Delta p+G)$ are known through the celebrated  Aronson-Ben\'ilan estimates \cite{ab}, which provide the existence of a constant $C>0$ such that $\gamma\Delta p\geq -C/t$ for all $t>0$.
Unfortunately, for the version of PME that we care about the most (a multispecies generalization that will be described in detail shortly), a uniform lower bound on $\gamma(\Delta p+G)$ appears to be impossible \cite{gpsg, perthame_david} (even if it is allowed to blow up at time zero).  
Without uniform bounds, we cannot rule out the possibility that \emph{every} trajectory spends some time near $\{p=0\}$ i.e. there may be no ``good'' trajectories where the regular Lagrangian flow theory applies.  Thus, we cannot just hope to use existing theory.

To overcome the aforementioned difficulties, we develop a new argument based on the quantitative estimates in \cite{crippa_delellis} that only requires us to control $\gamma(\Delta p+G)$ in a weighted $L^1$ space.  While the weighted $L^1$ bound cannot guarantee that trajectories stay away from $\{p=0\}$, together with equation (\ref{eq:pme_pressure}) it implies the following logarithmic version of Gronwall's inequality
\begin{equation}
    \int_{\RR^d} \rho^0(x)\log(1+\frac{1}{p(t,X(t,x))})\lesssim_t \int_{\RR^d} \rho^0(x)\log(1+\frac{1}{p(0,X(0,x))}),
\end{equation}
which provides some control on how trajectories interact with the zero level set.
Although the logarithmic bound appears to be extremely weak, it is highly compatible with the estimates in \cite{crippa_delellis} which also consider logarithmic quantities related to the flow map.  Ultimately, by combining these ideas we are able to construct the forward and backward Lagrangian flow maps along $-\nabla p$.  Notably, our arguments are sufficiently flexible to handle the Hele-Shaw limit (i.e. $\gamma\to\infty$) and a multispecies generalization of the Porous Media Equation where the equation is replaced by a coupled hyperbolic-parabolic system of reaction diffusion equations.  As one application of our flow maps, we are able to construct solutions where different species cannot mix together if they were separated at initial time.

In the rest of the introduction, we will introduce the multispecies generalization of PME, describe our main results, and finally lay out a road map for the rest of the paper.

\subsection{Multispecies generalizations of PME}
A particularly important application of PME is the modelling of living cells and tissues, particularly in the context of tumor growth \cite{BYRNE2003567, Preziosi2008, Ranft20863, pqv}.  When cells proliferate and grow, there is a buildup of mechanical pressure, which both pushes cells down the pressure gradient and affects growth rates via the biological phenomenon of contact inhibition \cite{pqv}.  This can be modeled by PME with a pressure dependent source term. 

For realistic modelling, it is important to take into account multiple cell populations (e.g. healthy cells versus tumor cells) and nutrient availability.   In this paper, we will be interested in studying a system of evolution equations for a finite number of cell populations with densities $\rho_1, \ldots, \rho_{\ell}$, whose total density $\rho=\sum_{i=1}^{\ell} \rho_i$ evolves according to PME with a source term. Each individual population evolves according to the continuity equation
\begin{equation}\label{eq:multi_pme}
    \partial_t \rho_i-\nabla \cdot (\rho_i\nabla p)=\rho_i G_i, \rho p=e(\rho)+e^*(p)
\end{equation}
where once again $e(\rho)=\frac{1}{\gamma+1}\rho^{\gamma+1}$ for some $\gamma>1$ or the incompressible limit $\gamma\to\infty$,
 $G_i$ is a growth function that depends on the pressure and a nutrient variable $n$. The nutrient is coupled to the other variables through the diffusion equation
\begin{equation}\label{eq:nutrients}
    \partial_t n-\alpha\Delta n=-n\sum_{i=1}^{\ell} \beta_i\rho_i,
\end{equation}
where $\alpha, \beta_i$ are scalars that determine the diffusion rate and consumption rate of the nutrients respectively.  The connection between the system (\ref{eq:multi_pme}-\ref{eq:nutrients}) and the classical PME (\ref{eq:pme_intro}) can be seen by
summing (\ref{eq:multi_pme}) over each of the populations.  Doing so recovers the equation
\begin{equation}\label{eq:pme}
\partial_t \rho-\nabla \cdot (\rho\nabla p)=\rho G, \quad p=\rho^{\gamma},
\end{equation}
where $G=\sum_{i=1}^{\ell} \frac{\rho_i}{\rho}G_i$.  

In the case of multiple cell populations, the model is a challenging system of coupled PDEs.  Indeed, existence of solutions to these systems in dimensions $d>1$ was only achieved recently (see \cite{hilhorst, carrillo_1d} for results in one dimension) in the series of papers \cite{gpsg, price_xu, liu_xu, jacobs_2021} for $d>1$, while well-posedness remains open. The difficulty of these systems stems from the fact that although equation (\ref{eq:pme}) is degenerate parabolic, the evolution equations for the individual populations (\ref{eq:multi_pme}) are hyperbolic.  Hence, the equation does not have any regularizing effect on the $\rho_i$ (for instance discontinuities at initial time will persist throughout the evolution).  While \cite{carrillo_1d} was able to obtain strong compactness of the $\rho_i$ in one dimension, the situation in $d>1$ is more complicated. Following the approach of \cite{gpsg}, all of the results for $d>1$ have constructed solutions by obtaining strong compactness for the pressure variable instead. The advantage of working with the pressure is that one can focus on the good properties of equation (\ref{eq:pme}), however, this approach cannot say much about the properties of the limiting $\rho_i$. 
In the rest of this paper, we will focus on constructing Lagrangian solutions for the multispecies system (\ref{eq:multi_pme}-\ref{eq:nutrients}).  Since these equations are strictly more general than the classical PME, this will automatically provide Lagrangian solutions to (\ref{eq:pme_intro}) as well. 

In addition to the construction of Lagrangian solutions, we also answer an important open question 
about the possibility of population mixing in the multispecies model. Indeed, until now, it has been unknown whether the individual populations will remain  unmixed throughout the evolution if they were separated at initial time.  More concretely, given initial data $\{\rho_i^0\}_{i\in \{1,\ldots, \ell\}}$ such that $\min(\rho_i^0,\rho_j^0)=0$ for all $i\neq j$ one wishes to know whether it is possible to construct solutions such that $\min(\rho_i,\rho_j)=0$ almost everywhere for all $i\neq j$.   This question has been difficult to answer, as previous methods for constructing solutions to the system (\ref{eq:multi_pme}-\ref{eq:nutrients}) have not been able to obtain strong compactness for the individual populations $\rho_{i}$ along approximate sequences \cite{gpsg, price_xu, liu_xu, jacobs_2021}.  As a result, any nonconvex properties of the $\rho_i$ along the sequence are lost in the limit. Using our Lagrangian flow maps, we are able to answer this question in the affirmative by exploiting an explicit representation formula for the individual densities in terms of the flow map.
Hence, in addition to the construction of Lagrangian solutions for PME, the other main result of this paper is a conclusive answer to the mixing question. 

In the rest of the introduction, we will give a more precise mathematical description of our setup and main results.

 \subsection{Preliminaries and main results}\label{ssec:main_results}

We begin by giving a more concrete description of the growth terms and our important assumptions on them.  Throughout the paper we shall place the following assumptions on the $G_i$.
 \begin{enumerate}
     \item[(G1)] \label{GA:cts}Each $G_i:=G_i(p,n)$ is a continuous and uniformly bounded function of the pressure $p$ and nutrient $n$.
     \item[(G2)] \label{GA:hp} If the pressure is sufficiently high, no growth occurs regardless of nutrient availability, i.e. there exists some $p_h>0$ such that $G_i(p,n)< 0$ for all $i\in \{1,\ldots, \ell\},n\in [0,\infty)$ and $p> p_h$ (the value $p_h$ has been called the homeostatic pressure in the literature \cite{pqv}). 
     \item[(G3)]\label{GA:wd} The following mild technical condition on the derivatives holds: \[\max_{i\in \{1,\ldots, \ell\}}\Big( p\partial_p G_i(p,n)\Big)_++ |\partial_n G_i(p,n)|\in L^{\infty}_{\loc}([0,\infty)^2).\]
 \end{enumerate}
 When we pass to the incompressible limit $\gamma\to\infty$ we will require the $G_i$ to satisfy an additional positivity condition:
 \begin{enumerate}
     \item[(G4)]\label{GA:sd} $\min_{i\in \{1,\ldots, \ell\}}\inf_{(p,n)\in [0,\infty)^2} \frac{1}{2}G_i(p,n)-p\partial_p G_i(p,n)>0$.
 \end{enumerate}
 
  Unlike (G1-G3), this last condition is much more  restrictive from a modelling perspective.  (G4) forces $G_i(0,0)>0$, meaning the cells must grow even in the absence of nutrients. Let us note however that (G4) is not purely technical, some property related to (G4) is necessary to guarantee the nonmixing property in the incompressible case. Given two populations with growth functions satisfying $G_1(0,0)>0$ and $G_2(0,0)<0$, it is easy to cook up a scenario where population 1 instantaneously mixes into population 2.  For instance, this will always happen in a scenario where the initial nutrient value is everywhere zero and the populations are placed in starting condition where they separated, share a codimension 1 boundary, and both saturate the incompressible constraint on their respective supports.
 
 For the initial data, all of our conditions are on the total density $\rho^0=\sum_{i=1}^{\ell}\rho_i^0$, the corresponding initial pressure $p^0$, and the starting nutrient level $n^0$.  We shall require the following regularity conditions.
 \begin{enumerate}
     \item[(ID1)] $\rho^0\in L^1(\RR^d)$, $\rho^0\in [0, p_h^{\gi}]$, and $|x|^2\rho^0\in L^1(\RR^d)$.
     \item[(ID2)] $\nabla p^0, \nabla n^0\in L^2(\RR^d)$,  and $n^0\in L^{\infty}(\RR^d)$.
     \item[(ID3)] $\gamma\rho^0(\Delta p^0+\sum_{i=1}^{\ell} \frac{\rho_i^0}{\rho_i}G_i(p^0,n^0))_{-}^2\in L^1(\RR^d)$.
 \end{enumerate}
 When we pass to the incompressible limit we shall require the two following additional conditions 
  \begin{enumerate}
     \item[(ID4)] $\rho^0\in \{0,1\}$ almost everywhere.
     \item[(ID5)] There exists a constant $\lambda>0$ such that $\rho^0\log(1+\frac{1}{p^0})^{\lambda}\in L^1(\RR^d)$. 
     \end{enumerate}
 
 Next, we give a concrete description of the solutions that we are interested in constructing.
 \begin{definition}\label{def:complete_lagrangian_solution}
 We will say that a tuple $(\rho_1,\ldots, \rho_k, p, n)$ is a \emph{complete Lagrangian solution} to the system (\ref{eq:multi_pme}-\ref{eq:nutrients}) with initial data $(\rho_1^0,\ldots, \rho_{\ell}^0, n^0)$ if the following conditions are met.
 \begin{enumerate}[(i)] \label{def:complete_lagrangian_solution_1}
     \item $(\rho_1,\ldots, \rho_k, p,n)$ is a weak solution to  (\ref{eq:multi_pme}-\ref{eq:nutrients}) with initial data $(\rho_1^0,\ldots, \rho_{\ell}^0, n^0)$ such that for any $T>0$, $\rho=\sum_{i=1}^{\ell}\rho_i\in L^{\infty}([0,T];L^1(\RR^d)\cap L^{\infty}(\RR^d))$.
     \item \label{def:complete_lagrangian_solution_2} For all $t, s\geq 0$ there exist unique forward and backward flow maps $X, Y$ satisfying the Lagrangian Flow equations 
     \begin{equation}\label{eq:x_flow}
     X(t,s,x)=x-\int_{s}^{t+s} \nabla p(\tau, X(\tau, s, x))\, d\tau \quad \textup{for almost all} \; x\in \spt(\rho(s,\cdot)),
 \end{equation}
 and 
\begin{equation}\label{eq:y_flow}
     Y(t,s,x)=x+\int_{\min(s-t,0)}^s \nabla p(\tau, Y(\tau, s, x))d\tau \quad \textup{for almost all} \; x\in \spt(\rho(s,\cdot)).
 \end{equation}
 \item \label{def:complete_lagrangian_solution_3} There exists a constant $B\geq 0$ such that for all $t,s\geq 0$
 \begin{equation}\label{eq:x_push}
   e^{-tB}\rho(s+t,\cdot)  \leq X(t,s,\cdot)_{\#}\rho(s,\cdot)\leq e^{tB}\rho(s+t,\cdot)\quad \textup{for almost all } \; x\in \spt(\rho(s+t,\cdot)), 
 \end{equation}
 and
 \begin{equation}\label{eq:y_push}
   e^{-tB}\rho(s,\cdot)  \leq Y(t,s,\cdot)_{\#}\rho(s+t,\cdot)\leq e^{tB}\rho(s,\cdot)\quad \textup{for almost all } \; x\in \spt(\rho(s,\cdot)).
 \end{equation}
    \item \label{def:complete_lagrangian_solution_4}The maps satisfy the semigroup property
 \begin{align}\label{eq:semigroup}
     X(t,s,x)=X(t-t',s+t',X(t',s,x))\quad \textup{for almost all } \; x\in \spt(\rho(s,\cdot)), \\
     Y(t,s,x)=Y(t-t',s-t',Y(t',s,x)) \quad \textup{for almost all } \; x\in \spt(\rho(s,\cdot)),
 \end{align}
 and the
 inversion formulas \begin{align}\label{eq:map_inversion}
     X(t,s,Y(t,s+t,x))=x\quad \textup{for almost all }\; x\in \spt(\rho(t+s,\cdot)),\\
     Y(t,s,X(t,s-t,x'))=x'\quad \textup{for almost all} \; x'\in \spt(\rho(s,\cdot)).
 \end{align}
 \item \label{def:complete_lagrangian_solution_5} For any test function $\vp$, each $\rho_i$ satisfies the representation formula
 \begin{equation}\label{eq:rho_representation}
     \int_{\RR^d} \rho_i(s+t,x)\vp(x)=\int_{\RR^d} \rho_i(s,x)\vp(X(t,s,x))\exp\Big(\int_{s}^{t+s} G_i\circ X(\tau,s,x)d\tau\Big)\, dx,
 \end{equation}
 where $G_i\circ X(\tau,s,x)$ is shorthand for $G_i\Big(p\big(\tau, X(\tau,s,x)\big), n\big(\tau, X(\tau,s,x)\big)\Big)$.
 
 \end{enumerate}
 \end{definition}
 \begin{remark}
 The uniqueness of the flow maps along $-\nabla p$ guarantees the uniqueness of the $\rho_i$ and $n$ when $p$ is held fixed. However, we are not able to prove that the system itself has a unique solution. Indeed, we cannot rule out the possibility that there could be solutions with different pressure variables starting from the same initial data, and there are results that these types of systems do not have unique solutions \cite{shmarev_seg_solutions}.
 \end{remark}

 We are now ready to give our main results.  For convenience we restrict our attention to values of $\gamma\geq 1$. 

 \begin{theorem}\label{thm:main1}
 Given growth terms satisfying assumptions (G1-G3), initial data $(\rho_1^0, \ldots, \rho_{\ell}^0, n^0)$ satisfying (ID1-ID3),  and $\gamma\in [1,\infty)$, there exists a complete Lagrangian solution $(\rho_1,\ldots, \rho_{\ell}, n,p)$ for (\ref{eq:multi_pme}-\ref{eq:nutrients}). Furthermore if for some $i\neq j$ we have $\min(\rho_i^0(x),\rho_j^0(x))=0$ almost everywhere, then for every $t\geq 0$ we have $\min(\rho_i(t,x),\rho_j(t,x))=0$ almost everywhere in $x$.
 \end{theorem}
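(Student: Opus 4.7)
The plan is to construct the complete Lagrangian solution via a vanishing-viscosity / mollification approximation together with a quantitative stability estimate in the spirit of Crippa--De Lellis, and then to derive the non-mixing property directly from the representation formula \eqref{eq:rho_representation}. Concretely, I would first regularize the system, for example by convolving the pressure gradient with a mollifier at scale $\eps$ (while keeping compatible solutions $(\rho_i^\eps, p^\eps, n^\eps)$ of an $\eps$-problem that is classically well-posed), so that the classical flow maps $X^\eps, Y^\eps$ along $-\nabla p^\eps$ exist. The strategy is to show that $\{X^\eps\}, \{Y^\eps\}$ are Cauchy in measure on $\spt(\rho(s,\cdot))$, uniformly in $s$ on compact intervals, and to define $X,Y$ as the limits.

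The uniform estimates that drive the argument are of two types. First, one needs the energy-type bounds from \cite{gpsg} giving $p|D^2 p|^2 \in L^1$ and $\nabla p \in L^2$ uniformly in $\eps$. Second, and crucially, one needs a weighted Aronson--B\'enilan estimate providing control of $\int \rho\,\log(1+1/p)^{1-\lambda'}\,\gamma^2(\Delta p+G)_-^2$, which together with \eqref{eq:pme_pressure} upgrades to a weighted $L^1$ bound on $\gamma(\Delta p+G)$. Taking this bound along trajectories and using \eqref{eq:formal_trajectory_calculation}, one obtains a log-Gronwall inequality of the form
\begin{equation*}
\int_{\RR^d} \rho^0(x)\log\bigl(1+1/p(t,X^\eps(t,x))\bigr)\,dx \lesssim_t 1+\int_{\RR^d}\rho^0(x)\log\bigl(1+1/p^0(x)\bigr)\,dx,
\end{equation*}
which prevents trajectories from concentrating too strongly near $\{p=0\}$.

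The main obstacle, and the technical heart of the argument, is to promote these bounds to a \emph{stability} estimate for $X^\eps - X^{\eps'}$. Here I would follow the Crippa--De Lellis framework but modify the key functional: set $\delta = \|\nabla p^\eps - \nabla p^{\eps'}\|_{L^2}$ and control
\begin{equation*}
\Phi_{\eps,\eps'}(t) := \int_{\RR^d} \rho^0(x)\log\Bigl(1+p(t,X^\eps(t,x))\,\log\bigl(1+\tfrac{\min(|X^\eps-X^{\eps'}|,1)}{\delta}\bigr)\Bigr)\,dx.
\end{equation*}
Differentiating in time, the inner logarithm absorbs a ``missing'' Sobolev derivative exactly as in \cite{crippa_delellis} (the factor of $p$ in front is what allows us to use $p|D^2p|^2 \in L^1$ as a substitute for genuine $W^{2,2}$ regularity), while the outer logarithm absorbs the growth generated by \eqref{eq:formal_trajectory_calculation} using the weighted $L^1$ bound on $\gamma(\Delta p+G)$. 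The resulting double-log Gronwall inequality gives $\Phi_{\eps,\eps'}(t) \lesssim_T \log(1+\log(1+1/\delta))$, which grows strictly slower than the normalization $\log(1+\log(1/\delta))$; a standard Chebyshev argument then forces $X^\eps \to X$ in measure on $\spt(\rho(s,\cdot))$. The same functional, played in reverse time, yields $Y^\eps \to Y$.

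With the limiting flows in hand, the flow equations \eqref{eq:x_flow}--\eqref{eq:y_flow}, the semigroup and inversion relations in Definition~\ref{def:complete_lagrangian_solution}(\ref{def:complete_lagrangian_solution_4}), and the two-sided push-forward bounds \eqref{eq:x_push}--\eqref{eq:y_push} follow by passing to the limit in the corresponding properties of $X^\eps, Y^\eps$ (the push-forward bounds using uniform $L^\infty$ bounds on $\div(\nabla p^\eps) = \Delta p^\eps$ in the weighted sense, combined with the uniform bound on $G$ from (G1)). The representation formula \eqref{eq:rho_representation} is obtained by writing each $\rho_i^\eps$ as the push-forward of $\rho_i^0$ weighted by $\exp(\int_0^t G_i\circ X^\eps)$ along the smooth flow, and then passing to the limit using the convergence of the flow and the boundedness/continuity of $G_i$. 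Finally, the non-mixing property is immediate: if $\rho_i^0$ and $\rho_j^0$ are mutually singular, then their supports $A_i, A_j$ satisfy $|A_i\cap A_j|=0$, and by \eqref{eq:rho_representation} combined with the inversion formula \eqref{eq:map_inversion} (which makes $X(t,0,\cdot)$ essentially injective on $\spt(\rho^0)$ modulo nullsets), the sets $X(t,0,A_i)$ and $X(t,0,A_j)$ are also essentially disjoint, so $\min(\rho_i(t,\cdot),\rho_j(t,\cdot))=0$ a.e.
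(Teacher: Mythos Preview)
Your overall architecture matches the paper's: a priori weighted AB-type bounds, the doubly-logarithmic Crippa--De Lellis functional with the extra factor of $p$ inside, a log-log Gronwall estimate, and non-mixing from the representation formula. The key technical insight---inserting $p(t,X(t,x))$ between the two logarithms so that $p|D^2p|^2\in L^1$ substitutes for Sobolev control on $\nabla p$---is exactly right.

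There is, however, a genuine gap in your approximation scheme. You propose to regularize by ``convolving the pressure gradient with a mollifier,'' but this decouples the velocity field from the pressure equation \eqref{eq:pme_pressure}, and every estimate you need (Propositions~\ref{prop:phessian}, \ref{prop:ab}, \ref{prop:u_l1}) relies on that exact structure; in particular the identity $\partial_t p - |\nabla p|^2 + pu = 0$ is used repeatedly and would be destroyed. The paper instead leaves the system untouched and perturbs the \emph{initial data}: it mollifies $\rho_i^0$ and adds $\frac{1}{k}e^{-|x|^2}$, which forces $\rho^0_k$ to be bounded below by a Gaussian. A subsolution argument then shows $p_k(t,x)\geq \delta^\gamma e^{-\gamma|x|^2-\theta t}$, so the equation becomes uniformly parabolic on compacta and classical smooth solutions exist (Proposition~\ref{prop:smooth_solutions}). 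This is the correct regularization, and it is not interchangeable with yours.

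Two smaller points. First, the push-forward bounds \eqref{eq:x_push}--\eqref{eq:y_push} do \emph{not} come from any $L^\infty$ control on $\Delta p$ (there is none); they come directly from the representation formula \eqref{eq:rho_representation} together with $|G_i|\leq B$ from (G1). Second, your stated bound $\Phi_{\eps,\eps'}(t)\lesssim_T \log(1+\log(1+1/\delta))$ does not grow strictly slower than $\log(1+\log(1/\delta))$---they are comparable. The paper gets the exponent $1-\lambda'<1$ on the double log (Lemma~\ref{lem:map_stability}), and the subsequent Chebyshev step (Proposition~\ref{prop:map_stability}) is not entirely standard: it requires splitting the bad set according to whether $p$ along the trajectory is small, and invoking the separate bound on $\int\rho\log(1+1/p)^{\lambda'}$ from Proposition~\ref{prop:u_l1} on that piece.
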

 
 \begin{theorem}\label{thm:main2}
Given growth terms satisfying assumptions (G1-G4) and initial data $(\rho_1^0, \ldots, \rho_{\ell}^0, n^0)$ satisfying (ID1-ID5) along with the additional condition $\rho^0\in \{0,1\}$ almost everywhere,  there exists a complete Lagrangian solution $(\rho_1,\ldots, \rho_{\ell}, n,p)$ to (\ref{eq:multi_pme}-\ref{eq:nutrients}) with $\gamma=\infty$, i.e. the incompressible system
\begin{equation}\label{eq:incompressible}
    \partial_t \rho_i-\nabla \cdot (\rho_i\nabla p)=\rho_i G_i, \quad p(1-\rho)=0, \quad \rho\leq 1
\end{equation}
\begin{equation}\label{eq:incompressible_nutrients}
    \partial_t n-\alpha\Delta n=-n\sum_{i=1}^{\ell}\beta_i\rho_i.
\end{equation}
Furthermore if for some $i\neq j$ we have $\min(\rho_i^0(x),\rho_j^0(x))=0$ almost everywhere, then for every $t\geq 0$ we have $\min(\rho_i(t,x),\rho_j(t,x))=0$ almost everywhere in $x$.
 \end{theorem}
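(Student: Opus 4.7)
The strategy is to obtain Theorem \ref{thm:main2} as the limit $\gamma\to\infty$ of the finite-$\gamma$ complete Lagrangian solutions produced by Theorem \ref{thm:main1}. For each $\gamma\in[1,\infty)$, let $(\rho_1^\gamma,\ldots,\rho_\ell^\gamma,n^\gamma,p^\gamma)$ be the solution given by Theorem \ref{thm:main1} with flow maps $X^\gamma, Y^\gamma$; the task reduces to (a) extracting suitable limits along a subsequence $\gamma_k\to\infty$, (b) verifying each item in Definition \ref{def:complete_lagrangian_solution} for the limit with $p(1-\rho)=0$, $\rho\leq 1$, and (c) propagating the non-mixing property through the limit.

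First, I would establish a priori bounds that are uniform in $\gamma$. The mass, second moment, and $\nabla p^\gamma\in L^2$, $\nabla n^\gamma\in L^2$ bounds follow from (ID1-ID2) directly. The crucial new ingredient is a $\gamma$-uniform weighted Aronson--B\'enilan estimate controlling the negative part of $\gamma(\Delta p^\gamma+G^\gamma)$ in a weighted $L^1$ space; this is exactly where the positivity condition (G4) is used, since it supplies the coercivity $\inf(\tfrac12 G_i-p\partial_p G_i)>0$ needed to close an energy inequality that does not deteriorate as $\gamma\to\infty$. Combining this with the doubly logarithmic quantity from the proof of Theorem \ref{thm:main1} and the initial bound (ID5), one gets $\int \rho^0\log(1+1/p^\gamma(t,X^\gamma(t,0,\cdot)))^\lambda\lesssim_T 1$ uniformly in $\gamma$, which in turn controls how far trajectories can wander toward the free boundary as $\gamma\to\infty$.

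Second, I would extract subsequential limits. Standard compactness gives $\rho^{\gamma_k}\rightharpoonup\rho$ in $L^\infty\cap L^1$, and because $\rho^\gamma\in[0,p_h^{\gi}]$, the limit obeys $\rho\leq 1$; the Hele--Shaw complementarity $p(1-\rho)=0$ follows from testing the equation for $p^{\gamma_k}$ against $p^{\gamma_k}$ and using the standard $\int\rho^\gamma\log\rho^\gamma$ identity. For the pressure, strong convergence $\nabla p^{\gamma_k}\to\nabla p$ in $L^2_{t,x}$ is obtained as in the Hele--Shaw limit literature. Then I would apply the Crippa--De Lellis style estimate that drove the proof of Theorem \ref{thm:main1} with the role of $b$ played by $-\nabla p^{\gamma_k}$ and the target Lagrangian flow driven by $-\nabla p$: the $\gamma$-uniform weighted bounds ensure the doubly logarithmic quantity does not blow up, yielding convergence of $X^{\gamma_k}\to X^\infty$, $Y^{\gamma_k}\to Y^\infty$ in measure on compact sets, where $X^\infty,Y^\infty$ solve the flow equations \eqref{eq:x_flow}-\eqref{eq:y_flow} with the limit pressure. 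The semigroup and inversion identities \eqref{eq:semigroup}-\eqref{eq:map_inversion} together with the Jacobian sandwich \eqref{eq:x_push}-\eqref{eq:y_push} pass to the limit because the constant $B$ can be chosen independent of $\gamma$ thanks to (G4) and the boundedness of the $G_i$.

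Third, the representation formula \eqref{eq:rho_representation} at level $\gamma_k$ passes to $\gamma=\infty$ by dominated convergence, since the $G_i$ are bounded (G1), $X^{\gamma_k}\to X^\infty$, and $p^{\gamma_k},n^{\gamma_k}$ converge along the flows in a strong enough sense by the weighted trajectory bound from Step 1. This simultaneously gives strong convergence of $\rho_i^{\gamma_k}\to\rho_i$, the continuity equation for $\rho_i$ in the limit, and the non-mixing property: if $\min(\rho_i^0,\rho_j^0)=0$ a.e., then $\rho_i(t,\cdot)$ and $\rho_j(t,\cdot)$ are pushforwards under the same injective map $X^\infty(t,0,\cdot)$ of disjointly supported initial densities, hence $\min(\rho_i(t,\cdot),\rho_j(t,\cdot))=0$ a.e. The same argument used in Theorem \ref{thm:main1} then completes the verification of items (i)-(v) of Definition \ref{def:complete_lagrangian_solution}.

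The main obstacle is Step 1: producing a weighted Aronson--B\'enilan estimate that is genuinely uniform in $\gamma$ up to $\infty$, rather than deteriorating like $\gamma^{-1}$ or blowing up through the $G_i$ terms. Ordinary AB estimates fail in this multispecies setting (as noted in the introduction), and the usual $L^2$/$L^3$ analogues of \cite{gpsg} deteriorate with $\gamma$; the delicate point is to show that inserting the weight $\log(1+1/p)^{2(1-\lambda')}$ with $\lambda'\in(0,1)$ together with the coercivity from (G4) is precisely what yields a differential inequality whose dissipative part absorbs the $\gamma$-singular source terms, giving an estimate that survives $\gamma\to\infty$. The secondary subtlety is ensuring that (ID5)'s initial doubly logarithmic moment is the correct quantity propagated by this estimate, so that the Crippa--De Lellis machinery has a finite initial datum to start from in the incompressible regime.
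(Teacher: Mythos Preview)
Your proposal follows essentially the same route as the paper: approximate by finite-$\gamma$ complete Lagrangian solutions from Theorem \ref{thm:main1}, use (G4) to obtain $\gamma$-uniform weighted Aronson--B\'enilan and $L^1$ bounds on $\gamma(\Delta p+G)$ (this is exactly the second part of Propositions \ref{prop:ab} and \ref{prop:u_l1}), extract strong limits of $\nabla p^{\gamma_k}$ in $L^2_{t,x}$ and of the flow maps via the doubly logarithmic stability estimate, and read off non-mixing from the representation formula. The identification of (G4) as the source of coercivity and of (ID5) as the initial moment that the weighted estimate propagates is correct.

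There is one genuine gap in Step 1. You write ``let $(\rho_1^\gamma,\ldots,\rho_\ell^\gamma,n^\gamma,p^\gamma)$ be the solution given by Theorem \ref{thm:main1}'' without specifying the initial data at level $\gamma$. You cannot simply feed the original $(\rho_i^0)$ into Theorem \ref{thm:main1}: since $\rho^0\in\{0,1\}$ by (ID4), the finite-$\gamma$ initial pressure would be $(\rho^0)^\gamma=\rho^0$, a characteristic function, for which (ID2) ($\nabla p^0\in L^2$) and (ID3) generally fail, and certainly not uniformly in $\gamma$. The paper resolves this by modifying the data: set $\rho^0_\gamma:=(p^0)^{1/\gamma}$ and $\rho^0_{i,\gamma}:=\rho^0_\gamma\,\rho_i^0/\rho^0$, where $p^0$ is the \emph{incompressible} initial pressure solving $\Delta p^0+G(p^0,n^0)=0$ on $\{\rho^0=1\}$, $p^0(1-\rho^0)=0$. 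Then the finite-$\gamma$ initial pressure is $(\rho^0_\gamma)^\gamma=p^0$ for every $\gamma$, so (ID2) transfers verbatim and (ID3) holds trivially with the quantity equal to zero (equation \eqref{eq:gamma_poisson_condition}). Condition (ID5) is exactly what ensures $\rho^0_\gamma\to\rho^0$ in $L^1$, so the approximation is consistent. Without this device your uniform-in-$\gamma$ estimates in Step 1 have no starting point. A minor aside: the constant $B$ in \eqref{eq:x_push}--\eqref{eq:y_push} is just $\max_i\sup|G_i|$ from (G1), not a consequence of (G4).
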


 The rest of the paper is structured as follows.  In Section \ref{sec:estimates},
 we assume that we have a smooth solution to the system and collect a number of important estimates, most crucially, the weighted AB type estimate on $\gamma^2(\Delta p+G)_-^2$ and the weighted $L^1$ estimate on $\gamma|\Delta p+G|$.  In section \ref{sec:flows}, we show how these estimates can be used to establish stability properties for the associated Lagrangian flow maps.  In the final Section, Section \ref{sec:compactness}, we show how one can construct smooth approximations to the system and then take limits to prove the main results.

\section*{Acknowledgements}
 The author is greatful to Inwon Kim and Simone di Marino for some motivating conversations.  Some of the ideas in this paper began when the author was a Simons Fellow at the Simons Institute for Computing.

 \section{Estimates}\label{sec:estimates}
 
 Throughout this section, we will assume that we have a smooth solution $(\rho_1,\ldots, \rho_{\ell}, p, n)$ to the system (\ref{eq:multi_pme}-\ref{eq:nutrients}) where the initial data satisfies assumptions (ID1-ID3), the growth terms satisfy (G1-G3) and $\gamma\in [1,\infty)$. This will allow us to investigate properties of the system without having to worry about integrability or differentiability issues.  Our main goal in this section will be to build towards bounds on $p|D^2p|^2$ and  $\rho(\Delta p+G)_-^2$ that only depend on the information (ID1-ID4).  For notational convenience we shall use $Q_T$ to denote the space time domain $Q_T:=[0,T]\times\RR^d$ for any $T>0$.
 
 Our analysis and estimates will be focused on the ``nice'' parabolic equations (\ref{eq:pme}) and (\ref{eq:pme_pressure}), rather than the hyperbolic equation (\ref{eq:multi_pme}).  Nonetheless, we will still need to work with the individual densities $\rho_i$ through their presence in the growth term $G=\sum_{i=1}^{\ell}\frac{\rho_i}{\rho} G_i$.  A formal calculation shows that the ratios $c_i:=\frac{\rho_i}{\rho}$ satisfy the transport equation
\begin{equation}\label{eq:c_i}
    \partial_t c_i-\nabla c_i\cdot \nabla p=c_i(G_i-G).
\end{equation}
Since we have already assumed we are working with smooth solutions, we can use this formula without issue.  It will play an important role in some of the subsequent estimates.

  We begin with some standard estimates for PME type equations.
\begin{lemma}
Let $B=\sup_{(p,n)\in [0,\infty)^2}\max_i |G_i(p,n)|$
and fix some time $T\geq 0$.  For any $m\geq 1$
\begin{equation}\label{eq:edr_m}
\int_{\RR^d\times \{T\}} \frac{1}{m}\rho^m +\int_{Q_T} (m-1)\rho^{m-1} \nabla \rho\cdot \nabla p=e^{BmT}\int_{\RR^d}\frac{1}{m}(\rho^0)^m,
\end{equation}
and
\begin{equation}
\int_{\RR^d\times \{T\}} |x|^2\rho\leq e^{T(B+1)}\Big(\int_{\RR^d} |x|^2\rho^0+ \int_{Q_T} \rho|\nabla p|^2\Big).
\end{equation}
Furthermore,
$p\leq p_h$ almost everywhere.
\end{lemma}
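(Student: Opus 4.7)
The plan is to handle the three claims in turn, each following from a standard integration-by-parts identity on the mass equation (\ref{eq:pme}) combined with either Gronwall's inequality or the parabolic maximum principle.

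For the first bound, I would multiply (\ref{eq:pme}) by $\rho^{m-1}$ and integrate over $\RR^d$. The time term produces $\frac{d}{dt}\int \frac{1}{m}\rho^m$, while integration by parts on the divergence contributes exactly $(m-1)\int \rho^{m-1}\nabla\rho\cdot\nabla p$; this quantity has a definite sign because $\nabla p = \gamma\rho^{\gamma-1}\nabla\rho$, so the integrand is non-negative. The source term is controlled by $Bm\bigl(\tfrac{1}{m}\int\rho^m\bigr)$ via (G1), and Gronwall's inequality then yields the claimed exponential control (with $\leq$ in place of $=$ in the statement).

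For the second-moment estimate, I would test (\ref{eq:pme}) against $|x|^2$, obtaining $\frac{d}{dt}\int |x|^2 \rho = -2\int x\cdot \rho\nabla p + \int |x|^2 \rho G$ after integration by parts. The first term is bounded via Young's inequality by $\int |x|^2 \rho + \int \rho|\nabla p|^2$, while the source is bounded by $B\int |x|^2 \rho$. This yields a differential inequality with exponential rate $B+1$ and an inhomogeneous forcing $\int \rho|\nabla p|^2$, and integrating in time produces exactly the stated estimate.

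For the pointwise bound $p \leq p_h$, I would apply the parabolic maximum principle to the pressure equation (\ref{eq:pme_pressure}). Assumption (ID1) gives $p^0 = (\rho^0)^\gamma \leq p_h$. At any would-be spacetime point where $p$ first exceeds $p_h$, one has $\nabla p = 0$, $\Delta p \leq 0$, and $\partial_t p \geq 0$, so (\ref{eq:pme_pressure}) collapses to $\partial_t p = \gamma p(\Delta p + G)$ at that point. Since $G = \sum_i c_i G_i$ is a convex combination of values $G_i(p,n)$ which are strictly negative for $p > p_h$ by (G2), we conclude $\partial_t p < 0$, a contradiction. None of these steps is genuinely difficult—this is essentially a warm-up lemma collecting the elementary a priori bounds that will underpin the more delicate weighted Aronson-B\'enilan estimates to follow.
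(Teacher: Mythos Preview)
Your proposal is correct and, for the first two claims, follows exactly the paper's approach: test (\ref{eq:pme}) against $\rho^{m-1}$ (resp.\ $|x|^2$), integrate by parts, and apply Gronwall. You are also right that (\ref{eq:edr_m}) should carry a $\leq$ rather than $=$; the paper's own proof invokes Gronwall's inequality and so really delivers the inequality.

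For the bound $p\le p_h$ the paper takes a slightly different route: instead of your pointwise maximum-principle argument, it multiplies (\ref{eq:pme_pressure}) by $(p-p_h)_+$, integrates by parts, and obtains
\[
\frac{d}{dt}\int_{\RR^d}\tfrac12(p-p_h)_+^2 + (\gamma-1)\int_{\RR^d}(p-p_h)_+|\nabla p|^2 \le 0,
\]
from which $(p-p_h)_+\equiv 0$ follows. Both arguments are standard and equally valid in the smooth setting of Section~\ref{sec:estimates}; the Stampacchia-type energy method in the paper is slightly more robust (it does not require locating a first-touching point and works verbatim for weak solutions), while your pointwise argument is more direct when smoothness is already assumed.
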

\begin{proof}
The first relation follows from integrating equation (\ref{eq:pme}) against $\rho^{m-1}$ and using Gronwall's inequality. The second bound follows from integrating (\ref{eq:pme}) against $|x|^2$, using Young's inequality (one can first integrate against $|x|^2e^{-\delta|x|^2}$ and then send $\delta\to 0$ to check that $|x|^2$ is a valid test function), and then using Gronwall's inequality. 

For the bound $p\leq p_h$, we can multiply equation (\ref{eq:pme_pressure}) against $(p-p_h)_+$ to obtain
\[
\int_{\RR^d} \frac{d}{dt} \frac{1}{2}(p-p_h)_+^2-(p-p_h)_+|\nabla p|^2\leq \int_{\RR^d} \gamma p(p-p_h)_+\Delta p
\]
where we have used the fact that each $G_i\leq 0$ whenever $p\geq p_h$. After integrating by parts and dropping a good term, we see that
\[
\int_{\RR^d} \frac{d}{dt} \frac{1}{2}(p-p_h)_+^2+(\gamma-1)(p-p_h)_+|\nabla p|^2\leq 0,
\]
and the result follows.

\end{proof}

The following $L^2$ bounds will be important for our AB estimates.
\begin{lemma}\label{lem:p_grad_bound}
Let $B=\sup_{(p,n)\in [0,\infty)^2}\max_i |G_i(p,n)|$
and fix some time $T\geq 0$.  For any $\gamma\in [1,\infty)$
\begin{equation}\label{eq:edr}
\int_{\RR^d\times \{T\}} \rho\log(\rho) +\int_{Q_T} \rho\frac{|\nabla p|^2}{\gamma p}\leq \int_{\RR^d\times \{T\}}\rho+\int_{Q_T} B\rho\log(\rho).
\end{equation}
Furthermore,
if $\zeta:[0,\infty)\to\RR$ is a nonnegative increasing function such that $\zeta$ is $C^1$ on $(0,\infty)$, then 
\begin{equation}\label{eq:zeta_grad_p}
\int_{Q_T} \rho\zeta'(p)|\nabla p|^2\leq \zeta(p_h)\big(\norm{\rho^0}_{L^1(\RR^d)} +B\norm{\rho}_{L^1(Q_T)}\big).
\end{equation}
\end{lemma}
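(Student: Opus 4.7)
The plan is to derive both estimates via classical entropy/energy computations, testing the density equation (\ref{eq:pme}) against suitable functions of $\rho$ (or equivalently of $p$). Both rely crucially on the pointwise identity $\nabla \rho = \frac{\rho}{\gamma p}\nabla p$, which is a direct consequence of $p = \rho^\gamma$ and allows one to convert $\int \nabla \rho \cdot \nabla p$ into the dissipation term $\int \frac{\rho |\nabla p|^2}{\gamma p}$. Both computations are justified by the standing smoothness assumption on $(\rho_1,\ldots,\rho_\ell, p, n)$ made at the start of Section \ref{sec:estimates}, with decay at spatial infinity handled by a standard cutoff argument controlled by the second moment bound from the previous lemma.

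For the first inequality, I would multiply (\ref{eq:pme}) by (a regularization of) $\log \rho$. The antiderivative identity $\partial_\rho(\rho \log \rho - \rho) = \log \rho$ converts the time term into $\frac{d}{dt}\int(\rho\log\rho - \rho)$. Integration by parts on the diffusion term produces $\int \nabla \rho\cdot \nabla p = \int \frac{\rho|\nabla p|^2}{\gamma p}$ by the identity above. The source contribution is $\int \rho G\log\rho$, which is absorbed into the right hand side using $|G_i|\leq B$ from (G1), and the extra $-\int \rho$ time derivative is absorbed via the mass equation $\frac{d}{dt}\int \rho = \int \rho G$. Integrating from $0$ to $T$ and bounding the initial contribution by the initial mass (using $\rho^0 \leq p_h^{1/\gamma}$) yields (\ref{eq:edr}). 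The regularization $\log(\rho + \delta)$ can then be removed by sending $\delta \to 0^+$, using that $\rho \log \rho$ is integrable on bounded support and that the dissipation integrand is nonnegative.

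For the second inequality, the key step is to identify the correct primitive so that $\zeta(p)$ acts as an admissible test function. Defining $\psi(\rho) := \int_0^\rho \zeta(\sigma^\gamma)\,d\sigma$ yields $\psi'(\rho) = \zeta(p)$, so multiplying (\ref{eq:pme}) by $\zeta(p)$ produces $\frac{d}{dt}\int \psi(\rho) + \int \rho\zeta'(p)|\nabla p|^2 = \int \rho G\,\zeta(p)$. The monotonicity of $\zeta$ together with the bound $p \leq p_h$ from the preceding lemma yield $\zeta(p) \leq \zeta(p_h)$ pointwise, so the source term is controlled by $B\zeta(p_h)\int \rho$. Similarly, $\psi(\rho^0) \leq \rho^0\,\zeta((\rho^0)^\gamma) = \rho^0\,\zeta(p^0) \leq \zeta(p_h)\rho^0$ by the same monotonicity and the initial bound $p^0 \leq p_h$. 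Integrating from $0$ to $T$ and discarding the nonnegative $\int \psi(\rho(T))$ gives exactly (\ref{eq:zeta_grad_p}).

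The main technical subtlety I anticipate is that $\zeta$ is only assumed $C^1$ on $(0,\infty)$, so both $\zeta$ and $\psi$ may be singular at the origin and $\zeta(p)$ is not a priori an admissible test function. I would resolve this by working with the shifted family $\zeta_\eta(p) := \zeta(p+\eta)$, which is $C^1$ and uniformly bounded on $[0,p_h]$ for each $\eta > 0$, running the computation above, and then letting $\eta\to 0^+$. On the left the dissipation converges by Fatou's lemma (the integrand is nonnegative and $\zeta_\eta'(p)\to \zeta'(p)$ pointwise for $p>0$), while on the right the continuity of $\zeta$ at $p_h$ gives $\zeta(p_h+\eta)\to \zeta(p_h)$, producing the stated bound in the limit. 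No step of the argument exploits anything beyond (ID1--ID3), the standing smoothness, and the pointwise bound $p\le p_h$ already established.
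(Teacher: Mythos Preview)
Your proposal is correct and follows essentially the same approach as the paper. For the first inequality the paper also tests \eqref{eq:pme} against $\log(\rho+\delta)$ and sends $\delta\to 0$; for the second it tests against $\zeta(p)$ and introduces the primitive $\bar\zeta(a)=\int_0^a \frac{\zeta(b)}{\gamma b^{1-1/\gamma}}\,db$, which after the substitution $b=\sigma^\gamma$ is exactly your $\psi(\rho)=\int_0^\rho \zeta(\sigma^\gamma)\,d\sigma$, and then bounds $\bar\zeta(p)\le \zeta(p_h)\rho$ just as you do. Your additional care with the $\zeta_\eta$ regularization to handle the possible singularity of $\zeta'$ at the origin is a detail the paper omits.
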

\begin{remark}
Note that the negative part of $\rho\log(\rho)$ can be controlled by the second moments of $\rho$. Indeed, one has $-\rho\log(\rho)\lesssim_d \rho^{1-\frac{1}{d+1}}\lesssim_d \rho(1+|x|^2)+(1+|x|^2)^{-d}.$
\end{remark}

\begin{proof}
 The first inequality follows from integrating (\ref{eq:pme}) against $\log(\rho+\delta)$,  and then sending $\delta\to 0$. 
 
For the second inequality, we integrate (\ref{eq:pme}) against $\zeta(p)$, to obtain
\[
\int_{\RR^d} \zeta(p)\partial_t \rho+\rho\zeta'(p)|\nabla p|^2=\int_{\RR^d} \rho \zeta(p)G.
\]
Note that $\zeta(p)\partial_t \rho=\frac{\zeta(p)}{\gamma p^{1-\gamma}}\partial_t p$.  If we set $\bar{\zeta}(a)=\int_0^a \frac{\zeta(a)}{\gamma a^{1-\gamma}}\, da$, then it follows that 
\[
\int_{\RR^d} \rho\zeta'(p)|\nabla p|^2+\frac{d}{dt} (\bar{\zeta}(p))=\int_{\RR^d} \rho \zeta(p)G.
\]
Integrating with respect to time, we get 
\[
\int_{\RR^d\times\{T\}} \bar{\zeta}(p)+\int_{Q_T} \rho\zeta'(p)|\nabla p|^2=\int_{\RR^d}\bar{\zeta}(p^0)+\int_{Q_T} \rho \zeta(p)G.
\]
Since $\zeta$ is positive and increasing, we have $\zeta(p)\leq \zeta(p_h)$ and $\bar{\zeta}(p)\leq \zeta(p_h)p^{\gi}=\zeta(p_h)\rho$.
The result now follows.
\end{proof}

For the nutrient equation, we have the following estimates that are standard for the heat equation
\begin{lemma}\label{lem:nutrient}
For any time $T>0$,
\begin{equation}
   \norm{n}_{W^{1,\infty}(\{T\}\times\RR^d)}\lesssim \norm{n^0}_{W^{1,\infty}(\{T\}\times\RR^d)}+\norm{\rho}_{L^{\infty}(Q_T)}\norm{n}_{L^{\infty}(Q_T)}T^{1/2}
\end{equation}
\begin{equation}
 \norm{\nabla n}_{L^2(\{T\}\times\RR^d)}^2+\norm{\partial_t n}_{L^2(Q_T)}^2\lesssim \norm{\nabla n^0}_{L^2(\RR^d)}^2+\norm{n}_{L^{\infty}(Q_T)}^2\norm{\rho}_{L^2(Q_T)}^2
\end{equation}
\end{lemma}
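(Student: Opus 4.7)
The plan is to treat the nutrient equation $\partial_t n - \alpha\Delta n = -n\sum_i \beta_i \rho_i$ as a linear heat equation with right-hand side $f := -n\sum_i \beta_i\rho_i$, and to apply two completely standard toolkits: Duhamel's formula with heat-kernel estimates for the $W^{1,\infty}$ bound, and a parabolic energy estimate for the $L^2$ bound on $\nabla n$ and $\partial_t n$. Crucially, we view $\rho$ and $n$ on the right-hand side as given and bounded in their respective norms, so $f$ is a source term whose $L^\infty$ and $L^2$ norms are under control.

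For the first estimate, I would write the Duhamel representation
\begin{equation*}
n(t,x) = (H_t^\alpha \conv n^0)(x) + \int_0^t \bigl(H_{t-s}^\alpha \conv f(s,\cdot)\bigr)(x)\, ds,
\end{equation*}
where $H_t^\alpha$ is the heat kernel for $\partial_t - \alpha\Delta$, and exploit the two scaling facts $\|H_t^\alpha\|_{L^1} = 1$ and $\|\nabla H_t^\alpha\|_{L^1} \lesssim t^{-1/2}$. The first immediately gives $\|n(T)\|_\infty \le \|n^0\|_\infty + CT\|\rho\|_{L^\infty(Q_T)}\|n\|_{L^\infty(Q_T)}$, while differentiating and using $\int_0^T (T-s)^{-1/2}ds = 2T^{1/2}$ gives
\begin{equation*}
\|\nabla n(T)\|_\infty \lesssim \|\nabla n^0\|_\infty + T^{1/2}\|\rho\|_{L^\infty(Q_T)}\|n\|_{L^\infty(Q_T)},
\end{equation*}
as claimed. (I would also note that the maximum principle in fact gives the sharper $\|n\|_\infty \le \|n^0\|_\infty$ since the reaction is dissipative, but this is not needed.)

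For the second estimate, I would multiply the nutrient equation by $\partial_t n$ and integrate in space:
\begin{equation*}
\int_{\RR^d} (\partial_t n)^2 + \frac{\alpha}{2}\frac{d}{dt}\int_{\RR^d}|\nabla n|^2 = -\int_{\RR^d} n\Bigl(\sum_{i=1}^\ell \beta_i\rho_i\Bigr)\partial_t n,
\end{equation*}
after integrating $-\alpha \int \Delta n\, \partial_t n$ by parts. Applying Young's inequality $ab \le \tfrac12 a^2 + \tfrac12 b^2$ to the right-hand side absorbs half of $\int(\partial_t n)^2$ into the left, leaving a term of the form $\tfrac12 \|n\|_{L^\infty(Q_T)}^2 \int(\sum_i \beta_i\rho_i)^2$. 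Integrating in time over $[0,T]$ and using $(\sum_i \beta_i\rho_i)^2 \lesssim \rho^2$ yields the stated bound.

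The main obstacle is essentially bookkeeping rather than any conceptual difficulty: one must justify that $n$ and $\nabla n$ decay at spatial infinity enough to integrate by parts and to apply Duhamel, but this follows from the standing smoothness assumption on the solution made at the start of Section~\ref{sec:estimates}. No AB-type machinery, no weighted logarithmic quantities, and no properties of $p$ beyond the $L^\infty$ and $L^2$ control on $\rho$ already established in the preceding lemmas are required, so the argument is self-contained and short.
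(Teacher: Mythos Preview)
Your proposal is correct and is precisely the standard heat-equation argument the paper has in mind; indeed the paper does not give a proof at all, merely declaring the estimates ``standard for the heat equation,'' and your Duhamel/energy approach is exactly what that phrase encodes.
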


The next estimate is essentially taken directly from \cite{perthame_david}.  We reproduce the argument here since we are in the case of multiple populations, however, the differences are relatively minor.  For notational convenience we will adopt the shorthand
\begin{equation}
    u:=-\gamma (\Delta p+G).
\end{equation}
With this shorthand, equation (\ref{eq:pme_pressure}) now reads
\begin{equation}\label{eq:u_pme}
\partial_t p -|\nabla p|^2+ p u=0.
\end{equation}

\begin{prop}\label{prop:phessian}
For any time $T>0$ there exists a constant $C(T)$ such that
\begin{equation}
   \int_{\RR^d\times \{T\}}|\nabla p|^2+\int_{Q_T} p|D^2p|^2\leq C(T).
\end{equation}
For any increasing function $\eta:\RR\to\RR$ and $m\in [2,4]$
\begin{equation}\label{eq:grad_p_power_estimate}
  \int_{Q_T} \eta'(p) |\nabla p|^m\lesssim 1+\norm{\frac{\eta(p)^2}{p\eta'(p)^{\frac{2m-4}{4}}}}_{L^{\frac{m}{4-m}}(Q_T)}\big(\int_{Q_T} p|\Delta p|^2+p|D^2p|^2\big).
\end{equation}

\end{prop}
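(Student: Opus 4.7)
My plan is to split the proof into two parts, handling the energy bound on $|\nabla p|^2$ and $p|D^2p|^2$ first, and then using that bound together with integration by parts to derive the weighted interpolation inequality. The two parts are somewhat independent: the first is a Gronwall-type energy estimate, while the second is an integration-by-parts plus Hölder argument that takes the first as an input.

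For the energy bound, I would multiply the pressure equation (\ref{eq:pme_pressure}) by $-\Delta p$ and integrate over $\RR^d$. The time term yields $\frac{1}{2}\frac{d}{dt}\int|\nabla p|^2$, the nonlinear term $\gamma p \Delta p \cdot (-\Delta p)$ produces the good dissipation $-\gamma\int p(\Delta p)^2$, while the cross terms $-\int|\nabla p|^2\Delta p = 2\int \langle D^2 p \nabla p,\nabla p\rangle$ and $-\gamma\int pG\Delta p$ need to be absorbed. The growth contribution is handled by Young's inequality using the $L^\infty$ bound on $p$ from the first lemma, and the quadratic cross term is absorbed into the dissipation together with a Bochner-type identity. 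Specifically, the identity
\begin{equation*}
\int p|D^2 p|^2 = \int p(\Delta p)^2 + \tfrac{3}{2}\int |\nabla p|^2 \Delta p
\end{equation*}
(obtained by integrating $p\,\partial_i\partial_j p\cdot \partial_i\partial_j p$ by parts twice) lets us convert the $(\Delta p)^2$ dissipation into a $|D^2 p|^2$ dissipation at the cost of terms that, after one more integration by parts and Young's inequality, are controlled by $\int|\nabla p|^2$ together with $\int p|G|^2$. Applying Gronwall and integrating in time then yields the claimed $C(T)$ bound.

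For the weighted interpolation bound, I would start from
\begin{equation*}
\int_{\RR^d}\eta'(p)|\nabla p|^m = \int_{\RR^d}\nabla\eta(p)\cdot(|\nabla p|^{m-2}\nabla p),
\end{equation*}
integrate by parts, and expand $\nabla\cdot(|\nabla p|^{m-2}\nabla p) = |\nabla p|^{m-2}\Delta p + (m-2)|\nabla p|^{m-4}\langle D^2 p\nabla p,\nabla p\rangle$. Both resulting terms are bounded pointwise by $(m-1)|\eta(p)||\nabla p|^{m-2}|D^2 p|$ (or $|\Delta p|$). I then apply Cauchy--Schwarz after splitting by $\sqrt{p}$:
\begin{equation*}
\int_{Q_T}|\eta(p)||\nabla p|^{m-2}(|\Delta p|+|D^2p|) \leq \Bigl(\int_{Q_T}\tfrac{\eta(p)^2}{p}|\nabla p|^{2m-4}\Bigr)^{1/2}\Bigl(\int_{Q_T} p|\Delta p|^2 + p|D^2p|^2\Bigr)^{1/2},
\end{equation*}
so that the energy term from Part 1 appears. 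The remaining factor is treated by Hölder's inequality: one writes the integrand as a product of $\tfrac{\eta(p)^2}{p\,\eta'(p)^{(2m-4)/4}}$ with a complementary factor that (combined with the dissipation term via the endpoint cases $m=2$ and $m=4$) closes the inequality into itself as $A\lesssim B^{1/2}C^{1/2}A^{1-2/m}$, where $A=\int\eta'(p)|\nabla p|^m$, $B$ is the weighted norm, and $C$ is the full dissipation. Rearranging and absorbing square roots via $xy\leq 1+x^2y^2$ produces the additive "$1+\cdots$" on the right-hand side with the claimed $L^{m/(4-m)}$ norm.

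The main obstacle I foresee is closing the first estimate without circular reasoning. The cross term $\int|D^2p||\nabla p|^2$ arising after multiplying by $-\Delta p$ is not directly absorbable into $\int p|D^2p|^2$ unless we control a quantity like $\int|\nabla p|^4/p$, for which Lemma \ref{lem:p_grad_bound} gives no information (the natural choice $\zeta'(p)=1/p$ leads to $\zeta(p_h)=\log(p_h)$ but $\zeta(0)=-\infty$). The resolution is to avoid that term altogether by going through the Bochner identity above, turning $\int p(\Delta p)^2$ directly into $\int p|D^2p|^2$ without ever splitting $|D^2p|$ from its $\sqrt p$ weight, and controlling the leftover boundary-type terms via the already-obtained $L^2$ bound on $\nabla p$ from the $m=2$ case (which is itself closable through a separate Gronwall argument using only the initial energy and the $L^\infty$ bound on $p$).
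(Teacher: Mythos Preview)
Your proposal is correct and the second part matches the paper almost line for line: integrate by parts, split off $\sqrt{p}$, apply H\"older, and close via $A \lesssim (BC)^{1/2}A^{1-2/m}$, which after rearrangement gives $A \lesssim (BC)^{m/4} \leq 1 + BC$.

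For the first part you take a genuinely different route. You test the pressure equation against $-\Delta p$, whereas the paper tests against $\gamma^{-1}u = -(\Delta p + G)$. Your choice makes the time derivative come out directly as $\tfrac{1}{2}\tfrac{d}{dt}\int |\nabla p|^2$, so you never see the term $\int G\,\partial_t p$ that the paper spends a page on (expanding $G=\sum_i c_i G_i$, invoking the transport equation for the $c_i$, and integrating by parts to remove derivatives from $c_i$). In exchange, the growth contribution in your calculation is $-\gamma\int p G\Delta p$, which you absorb by Young into the $\gamma\int p(\Delta p)^2$ dissipation; combined with the identity $\int |\nabla p|^2\Delta p = \tfrac{2}{3}\int p|D^2p|^2 - \tfrac{2}{3}\int p(\Delta p)^2$ the estimate closes cleanly for $\gamma\geq 1$. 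This is simpler than the paper's argument and suffices for the proposition as stated. The price is that your right-hand side carries a factor of $\gamma$ (from $\gamma\int p G^2$), so your constant $C(T)$ blows up as $\gamma\to\infty$. The paper's more elaborate multiplier is chosen precisely to get a $\gamma$-independent bound on $\int_{Q_T} p|D^2p|^2$, which is essential later for the incompressible limit in Section~\ref{ssec:compactness_incompressible}, and also yields the auxiliary bound on $\int_{Q_T}\gamma^{-1}p u^2$ as a byproduct.
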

\begin{proof}
Integrating equation (\ref{eq:u_pme}) against $\gi u$, we get 
\[
\gi \int_{Q_T} u\partial_t p- u|\nabla p|^2+ pu^2=0.
\]
Note that $-\gi u|\nabla p|^2=G|\nabla p|^2+\Delta p|\nabla p|^2.$ 
Integrating by parts, we see that
\[
\int_{Q_T} \Delta p|\nabla p|^2=\int_{Q_T} 2p|D^2p|^2+2p\nabla \Delta p\cdot \nabla p=\int_{Q_T} 2p|D^2p|^2-2\Delta p|\nabla p|^2-2p|\Delta p|^2.
\]
Hence, we obtain the identity, 
\[
\int_{Q_T} \Delta p|\nabla p|^2=\frac{2}{3} \int_{Q_T} p|D^2p|^2-p|\Delta p|^2.
\]
Expanding $p|\Delta p|^2=\frac{1}{\gamma^2}u^2-\frac{2}{\gamma} puG+pG^2$, our combined work gives us
\[
\gi \int_{Q_T} u\partial_t p+\gamma G|\nabla p|^2+(1-\frac{2}{3\gamma}) pu^2-\frac{4}{3}puG+\frac{2\gamma}{3}pG^2+\frac{2\gamma}{3}p|D^2p|^2\leq 0.
\]
Applying Young's inequality to $p uG$, we can conclude that for any $\gamma\geq 1$
\begin{equation}\label{eq:d2p_1}
\int_{Q_T} \gi u\partial_t p+\gi pu^2+p|D^2p|^2\lesssim \int_{Q_T} pG^2+G|\nabla p|^2
\end{equation}

Now we turn our attention to the time derivative term.  We see that 
\[
\int_{Q_T} \gi u\partial_t p=\int_{Q_T} -G\partial_t p-\Delta p\partial_t p= \norm{\nabla p(T,\cdot)}_{L^2(\RR^d)}^2-\norm{\nabla p^0}_{L^2(\RR^d)}^2-\int_{Q_T} G\partial_t p.
\]
Recall that $G=\sum_{i=1}^{\ell} c_iG_i(p, n)$
where $c_i$ satisfy (\ref{eq:c_i}).  We then see that
\[
G\partial_t p=\sum_{i=1}^{\ell} c_i \big( \frac{d}{dt} \bar{G}_i(p,n)-\partial_n \bar{G}_i(p,n)\partial_t n\big),
\]
where $\bar{G}_i:\RR^2\to\RR$ is defined as $\bar{G}_i(p,n):=\int_0^p G_i(a,n)\, da$.
Hence,
\[
\int_{Q_T} G\partial_t p=\sum_{i=1}^{\ell}\int_{Q_T} \frac{d}{dt}\big(c_i\bar{G}_i(p,n)\big)-c_i\partial_n\bar{G}_i(p,n)\partial_t n-\bar{G}_i(p,n)\partial_t c_i.
\]
Using (\ref{eq:c_i}), we can now estimate
\begin{multline}\label{eq:d2p_2}
\Big|\int_{Q_T} G\partial_t p\Big|\leq \sum_{i=1}^{\ell} \norm{G_i}_{L^{\infty}(\RR^2)}\norm{p^0+p(T,\cdot)}_{L^1(\RR^d)}+\norm{\partial_n G_i}_{L^{\infty}(\RR^2)}\norm{p}_{L^2(Q_T)}\norm{\partial_t n}_{L^2(Q_T)}\\
+\sum_{i=1}^{\ell}\Big|\int_{Q_T} \bar{G}_i(p,n)\big(\nabla c_i\cdot \nabla p+c_iG_i-c_i\sum_jc_jG_j\big)\Big|.
\end{multline}
For the final integral we want to remove derivatives from $c_i$. Integrating by parts and then using Young's inequality, we get
\[
\int_{Q_T} \bar{G}_i(p,n)\nabla c_i\cdot \nabla p=\int_{Q_T} c_i (\gi u+G)\bar{G}_i(p,n)- c_iG_i(p,n)|\nabla p|^2
\]
\[
\leq \int_{Q_T} c_i \gi pu^2+\gi\frac{\bar{G}_i(p,n)^2}{p}+G\bar{G}_i(p,n)- c_iG_i(p,n)|\nabla p|^2.
\]
The first result now follows from combining the previous line with (\ref{eq:d2p_1}) and (\ref{eq:d2p_2}).

For the second result, we integrate by parts and then use Young's inequality to get
\[
\int_{Q_T} \eta'(p) |\nabla p|^m=-\int_{Q_T}\eta(p)(\Delta p|\nabla p|^{m-2}+(m-2)|\nabla p|^{m-4} D^2p:\nabla p\otimes \nabla p)
\]
\[
\leq \frac{1}{2}\int_{Q_T} a p|\Delta p|^2+a(m-2)p|D^2p|^2+a^{-1}(m-1)\frac{\eta(p)^2}{p}|\nabla p|^{2m-4}
\]
for some constant $a$.
After applying Holder's inequality we obtain
\[
\int_{Q_T} \eta'(p) |\nabla p|^m\lesssim a^{-1}\norm{\frac{\eta(p)^2}{p\eta'(p)^{\frac{2m-4}{4}}}}_{L^{\frac{m}{4-m}}(Q_T)}\norm{\eta'(p)|\nabla p|^m}_{L^1(Q_T)}^{\frac{2m-4}{m}}+ a\big(\int_{Q_T} p|\Delta p|^2+p|D^2p|^2\big).
\]
Since $\frac{2m-4}{m}\leq 1$ for $m\in [2,4]$, it follows that
\[
\int_{Q_T} \eta'(p) |\nabla p|^m\lesssim 1+\norm{\frac{\eta(p)^2}{p\eta'(p)^{\frac{2m-4}{4}}}}_{L^{\frac{m}{4-m}}(Q_T)}\big(\int_{Q_T} p|\Delta p|^2+p|D^2p|^2\big).
\]
as desired.

\end{proof}

The rest of this section will be building towards the  weighted $L^1$ bounds on $\gamma (\Delta p+G)$ (c.f. Proposition \ref{prop:u_l1}).  Most of the effort will be in establishing weighted $L^2$ AB type estimates on $\gamma(\Delta p+G)_-$ which is equivalent to estimating $u_+^2$.  Our estimate of $u_+^2$
 is a modification of the estimate from \cite{gpsg} and related to the weighted estimates in \cite{weighted_ab}.  Instead of directly estimating $u_+^2$, we consider the weighted quantity $\omega(p) u_+^2$ where $\omega:\RR\to\RR$ will be a function that vanishes at 0. 
More specifically, we shall require that our weight $\omega:\RR\to\RR$ satisfies the following properties
\begin{enumerate}[(W1)]
\item $\omega$ is nonnegative, increasing, and concave.
\item $\omega(a)\leq \gamma a\omega'(a)$ for all $a\in [0,p_h]$.
\item There exists a constant $C>0$  such that $\omega(a)\leq Ca^{\gi} \max(\frac{1}{\gamma },a\zeta'(a))$ where $\zeta(a)$ is a nonnegative, increasing function. 
\end{enumerate} 
Note that condition (W3) combined with the integrability properties of $\rho=p^{\gamma}$ implies that $\omega(p)\in L^{\infty}([0,T];L^1(\RR^d))$.
We will keep the weights abstract until our $L^1$ estimate, Proposition \ref{prop:u_l1}, where we will finally fix a choice.

Let us note that the main advantage of working with these weaker weighted quantity is that we can have far less restrictive structural assumptions on the growth terms and our estimates will hold in the incompressible limit $\gamma\to\infty$.  In addition, the calculation itself will be a bit simpler than the one in \cite{gpsg} since we do not need to include a localizing function (more precisely, one can think of $\omega(p)$ as a special choice of a localizing function).  Nonetheless, the calculation is still quite complicated and will be separated into a few different steps. Readers who are just interested in the bound itself can skip to the statements of Propositions \ref{prop:ab} and more importantly \ref{prop:u_l1}.  Readers who are interested in the argument itself will be ``rewarded'' with many ``fun'' (tedious) applications of integration by parts and Young's inequality.  

\begin{lemma}\label{lem:dtu_estimate} Let $f:\RR\to\RR$ be a $C^2$ convex increasing function such that $f(a)=0$ for all $a\leq 0$.  If we let $f^*$ denote the convex conjugate of $f$, then
\begin{multline}\label{eq:lem_dtu}
   \frac{d}{dt}\int_{\RR^d}\gi \omega(p) f(u)+\int_{\RR^d} \omega(p) f^*(f'(u))(\gi u+G)+p\omega(p) f''(u)|\nabla u|^2\\
 \leq \int_{\RR^d} Gf(u)(\frac{2}{\gamma}\omega(p)-p\omega'(p)) + \omega(p) f'(u)(2\nabla G\cdot \nabla p-\partial_t G)
\end{multline}
\end{lemma}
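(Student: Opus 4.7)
The plan is to differentiate $\int \gi\omega(p) f(u)$ in time and use the PME rewritten as $\partial_t p = |\nabla p|^2 - pu$ together with an evolution equation for $u = -\gamma(\Delta p + G)$. The time derivative splits as $\int \gi\omega'(p)\partial_t p\, f(u) + \int \gi\omega(p) f'(u)\partial_t u$. The first piece is handled directly from (\ref{eq:u_pme}). For the second, I would apply $-\gamma\Delta$ to (\ref{eq:u_pme}) and expand $\Delta|\nabla p|^2 = 2|D^2p|^2 + 2\nabla p\cdot\nabla\Delta p$ and $\Delta(pu) = p\Delta u + 2\nabla p\cdot\nabla u + u\Delta p$, using the algebraic identities $\gamma\Delta p = -u-\gamma G$ and $\gamma\nabla\Delta p = -\nabla u - \gamma\nabla G$ wherever possible to eliminate higher derivatives of $p$. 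This yields a representation of $\partial_t u$ containing $-2\gamma|D^2p|^2$, the second-order term $\gamma p\Delta u$, first-order $\nabla p\cdot\nabla u$ terms with coefficient $2(\gamma+1)$, algebraic terms $-u^2-\gamma uG$, and the source contributions $2\gamma\nabla p\cdot\nabla G - \gamma\partial_t G$ that appear on the RHS of the lemma.

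Two integrations by parts then organize the expression. First, $\int\omega(p) f'(u)\gamma p\Delta u$ is integrated by parts to produce the dissipative term $-\int \gamma p\omega(p) f''(u)|\nabla u|^2$ together with $-\int\gamma[p\omega'(p)+\omega(p)]f'(u)\nabla p\cdot\nabla u$. Second, every remaining term of the form $(\cdot)\nabla p\cdot \nabla u$ is recast using $f'(u)\nabla u = \nabla f(u)$, and integration by parts converts it into $-\int f(u)\,\nabla\cdot(A(p)\nabla p)$ for the appropriate weight $A\in\{\omega(p),p\omega'(p)\}$. The Laplacian that appears is replaced by $\Delta p = -\gi u - G$ using the definition of $u$, generating $f(u)|\nabla p|^2$, $uf(u)$, and $Gf(u)$ contributions with coefficients built from $\omega$, $\omega'$ and $\omega''$.

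Next I would match these contributions to the statement of the lemma. Using the Legendre identity $f^*(f'(u)) = uf'(u)-f(u)$, the target term $\omega(p) f^*(f'(u))(\gi u + G)$ expands into four pieces; adding it to the computed $\frac{d}{dt}\int \gi\omega(p) f(u)$ together with $\int p\omega(p) f''(u)|\nabla u|^2$, one checks that (i) the $|\nabla u|^2$ terms cancel exactly, (ii) the $u^2 f'(u)$ and $uf'(u)G$ cross-terms cancel, (iii) the $f(u) G$ contributions collapse to $\int f(u) G[\frac{2}{\gamma}\omega(p) - p\omega'(p)]$ matching the first term on the RHS, and (iv) the $2\nabla p\cdot\nabla G$ and $\partial_t G$ terms inherited from $\partial_t u$ match the second RHS term. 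What remains on the LHS after all cancellations is a sum of three terms: $-\int 2\omega(p) f'(u)|D^2p|^2$, $\int f(u)(p\omega''(p)-\gi\omega'(p))|\nabla p|^2$, and $\frac{2}{\gamma^2}\int uf(u)(\omega(p)-\gamma p\omega'(p))$. These are all nonpositive: the first because $\omega\ge 0$ and $f'\ge 0$; the second because (W1) gives $\omega''\le 0$ and $\omega'\ge 0$ while $f\ge 0$; and the third because (W2) gives $\omega(p)-\gamma p\omega'(p)\le 0$ on $[0,p_h]$ and $uf(u)\ge 0$ as $f$ vanishes on $(-\infty,0]$.

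The main obstacle I anticipate is purely the bookkeeping: every coefficient must be tracked with the right sign through the two rounds of integration by parts, and one must recognize in advance that the Legendre conjugate $f^*(f'(u))$ is the precise algebraic combination that absorbs all of the lower-order $u^2 f'(u)$, $uf(u)$, and $uGf'(u)$ terms produced by the computation. The structural heart is Bernstein-type: the good dissipations $p\omega(p) f''(u)|\nabla u|^2$ and $\omega(p) f'(u)|D^2p|^2$ are engineered to absorb all second-order information, while the role of the weight $\omega$ via (W1)--(W2) is to kill the residual $|\nabla p|^2$ and $uf(u)$ terms without leaving any ``bad'' contribution behind.
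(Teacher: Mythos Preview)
Your proposal is correct and follows essentially the same route as the paper: differentiate in time, derive the evolution equation for $u$ from $-\gamma\Delta$ applied to (\ref{eq:u_pme}), integrate by parts twice (once on the $p\Delta u$ term, once on the remaining $\nabla p\cdot\nabla u$ terms using $f'(u)\nabla u=\nabla f(u)$), invoke the Legendre identity $f^*(f'(u))=uf'(u)-f(u)$, and then drop exactly the three sign-favorable residuals you list using (W1)--(W2). The only cosmetic difference is that the paper first rewrites $f'(u)\Delta u=\Delta f(u)-f''(u)|\nabla u|^2$ before integrating by parts rather than integrating $\omega(p)p f'(u)\Delta u$ directly, but the bookkeeping leads to the same identity (your equation is the paper's (\ref{eq:dtu6})).
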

\begin{remark}
Rather than directly work with $f(u)= u_+^2$ we instead consider a more generic function $f$, which makes it easier to see when an integration by parts will be useful and helps us see why we will eventually be forced into the choice $f(u)=u_+^2$. 
\end{remark}

\begin{proof}
Differentiating in time and using (\ref{eq:pme_pressure}), we have
\[
\frac{d}{dt}\int_{\RR^d} \gi \omega(p) f(u)=\int_{\RR^d} \gi \omega'(p)[|\nabla p|^2- p u]f(u)+\omega(p) f'(u) \gi \partial_t u.
\]
Using (\ref{eq:pme_pressure}) again, we see that
\[
\gi \partial_t u= \Delta (pu)-\Delta|\nabla p|^2 -\partial_t G. 
\]
Expanding the terms with the Laplacian, we get
\[
\gi \partial_t u=-(\gi u+G)u+2\nabla u\cdot \nabla p+ p\Delta u-2|D^2p|^2+2 \nabla (\gi u+G)\cdot \nabla p-\partial_t G
\]
Hence, after some rearranging, we have shown that
\begin{multline}\label{eq:dtu1}
   \frac{d}{dt}\int_{\RR^d}\gi \omega(p) f(u)+\int_{\RR^d} \omega(p) uf'(u)(\gi u+G)+2\omega(p) f'(u)|D^2p|^2\\
 =\int_{\RR^d} \gi \omega'(p)\big(|\nabla p|^2-p u\big)f(u)+\omega(p) f'(u)\Big( 2\nabla u\cdot \nabla p+ p\Delta u+2\nabla (\gi u+G)\cdot \nabla p\Big)-\omega(p) f'(u)\partial_t G
\end{multline}
Now we want to move spatial derivatives off of $u$.  Moving $f'(u)$ inside the parentheses, we see that the second term on the right hand side of (\ref{eq:dtu1}) is equal to
\[
\int_{\RR^d} \omega(p) \Big( 2(1+\gi)\nabla f(u)\cdot \nabla p+ p\Delta f(u)- pf''(u)|\nabla u|^2+2f'(u)\nabla G\cdot \nabla p \Big)
\]
Integrating by parts, the previous line is equal to
\begin{equation}\label{eq:dtu2}
\int_{\RR^d}  f(u)\Delta (p\omega(p) ) -2(1+\gi) f(u)\nabla \cdot (\omega(p)\nabla p) - p\omega(p) f''(u)|\nabla u|^2+2\omega(p)f'(u)\nabla G\cdot \nabla p
\end{equation}
Now we expand $\Delta (p\omega(p))=\nabla \cdot (\omega(p)\nabla p)+\nabla \cdot (p\omega'(p)\nabla p)$ to see that
(\ref{eq:dtu2}) is equal to
\begin{equation}\label{eq:dtu3}
\int_{\RR^d} -(1+\frac{2}{\gamma}) f(u)\nabla \cdot (\omega(p)\nabla p)+f(u)\nabla \cdot (p\omega'(p)\nabla p) - p\omega(p) f''(u)|\nabla u|^2+2\omega(p) f'(u)\nabla G\cdot \nabla p
\end{equation}
Plugging this back into (\ref{eq:dtu1}) and rearranging, we have
\begin{multline}\label{eq:dtu4}
   \frac{d}{dt}\int_{\RR^d}\gi \omega(p) f(u)+\int_{\RR^d} \omega(p) uf'(u)(\gi u+G)+2\omega(p) f'(u)|D^2p|^2+\omega(p) f'(u)\partial_t G\\
 =\int_{\RR^d} \gi \omega'(p)\big(|\nabla p|^2-p u\big)f(u)-(1+\frac{2}{\gamma}) f(u)\nabla \cdot (\omega(p)\nabla p)\\ +\int_{\RR^d} f(u)\nabla \cdot (p\omega'(p)\nabla p) - p\omega(p) f''(u)|\nabla u|^2+2\omega(p) f'(u)\nabla G\cdot \nabla p
\end{multline}
Expanding the terms with the divergence operator, we get 
\begin{multline}\label{eq:dtu5}
   \frac{d}{dt}\int_{\RR^d}\gi \omega(p) f(u)+\int_{\RR^d} \omega(p) uf'(u)(\gi u+G)+2\omega(p) f'(u)|D^2p|^2+\omega(p) f'(u)\partial_t G\\
 =\int_{\RR^d} \gi \omega'(p)\big(|\nabla p|^2-p u\big)f(u)+(1+\frac{2}{\gamma}) f(u)\omega(p)(\gi u+G)-\frac{2}{\gamma}f(u)\omega'(p)|\nabla p|^2\\ +\int_{\RR^d} f(u)(p\omega''(p)|\nabla p|^2 -p\omega'(p)(\gi u+G)) - p\omega(p) f''(u)|\nabla u|^2+2\omega(p) f'(u)\nabla G\cdot \nabla p
\end{multline}
Combining similar terms and rearranging, we get 
\begin{multline}\label{eq:dtu6}
   \frac{d}{dt}\int_{\RR^d}\gi \omega(p) f(u)+\int_{\RR^d} \omega(p) (uf'(u)-f(u))(\gi u+G)+2\omega(p) f'(u)|D^2p|^2+p\omega(p) f''(u)|\nabla u|^2\\
 =\int_{\RR^d} \frac{2}{\gamma}\big(\frac{1}{\gamma} \omega(p)-p\omega'(p)\big)uf(u)+Gf(u)(\frac{2}{\gamma}\omega(p)-p\omega'(p)) +f(u)|\nabla p|^2\big(p\omega''(p)-\frac{1}{\gamma}\omega'(p)\big)\\ +\int_{\RR^d}  \omega(p) f'(u)(2\nabla G\cdot \nabla p-\partial_t G)
\end{multline}
Thanks to our assumptions on $f$ and $\omega$ the terms $2\omega(p) f'(u)|D^2p|^2$, $\frac{2}{\gamma}\big(\frac{1}{\gamma} \omega(p)-p\omega'(p)\big)uf(u)$ and $f(u)|\nabla p|^2\big(p\omega''(p)-\frac{1}{\gamma}\omega'(p)\big)$ are all terms with favorable signs.  Dropping these terms, we get 
\begin{multline}\label{eq:lem_dtu}
   \frac{d}{dt}\int_{\RR^d}\gi \omega(p) f(u)+\int_{\RR^d} \omega(p) \big(uf'(u)-f(u)\big)(\gi u+G)+p\omega(p) f''(u)|\nabla u|^2\\
 \leq \int_{\RR^d} Gf(u)(\frac{2}{\gamma}\omega(p)-p\omega'(p)) + \omega(p) f'(u)(2\nabla G\cdot \nabla p-\partial_t G)
\end{multline}
The result now follows from the identity $uf'(u)-f(u)=f^*(f'(u))$. 

\end{proof}

In the next Lemma, we tackle the estimate of the term $(\partial_t G-2\nabla G\cdot p)$.    This term is quite annoying since it has the form of a transport equation along $-2\nabla p$ instead of $-\nabla p$.  Ultimately, we would like to estimate this term in such a way that there are no derivatives on the ratio variables $c_i$. 

\begin{lemma}\label{lem:dtg_estimate}
There exists a constant $C$ depending only on $T$ and the initial data such that for any $\theta>0$
\begin{multline}\label{eq:dtg}
\int_{Q_T} \omega(p) f'(u)(2\nabla G\cdot \nabla p-\partial_t G)\leq C(1+\theta^{-1})  +\int_{Q_T} \theta \omega(p)f'(u)^2\\
+\int_{Q_T} \frac{1}{2} p\omega(p)f''(u)^2|\nabla u|^2+\omega(p)f'(u)(G+B)(\gi u+G)+ uf'(u)p\omega(p)\partial_p G.
\end{multline}
\end{lemma}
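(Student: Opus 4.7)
The quantity $2\nabla G\cdot\nabla p-\partial_t G$ resembles a transport expression along the vector field $-2\nabla p$ rather than $-\nabla p$, so it cannot be directly simplified using \eqref{eq:c_i}. My plan is to expand it via the chain rule on the composition $G = \sum_i c_i(t,x)\,G_i(p(t,x),n(t,x))$, substitute the PDE relations \eqref{eq:u_pme} and \eqref{eq:c_i} to trade time derivatives for spatial derivatives, and then integrate by parts to remove the spatial derivatives of the $c_i$, since we have no direct control on those.

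\textbf{Main computation.} Differentiating the composition, then substituting $\partial_t p = |\nabla p|^2 - pu$ from \eqref{eq:u_pme} and $\partial_t c_i - \nabla c_i\cdot \nabla p = c_i(G_i - G)$ from \eqref{eq:c_i}, a direct algebraic simplification yields
\begin{equation*}
2\nabla G\cdot\nabla p - \partial_t G = \sum_i G_i\nabla c_i\cdot\nabla p - \sum_i c_i(G_i - G)^2 + \partial_p G\,(|\nabla p|^2 + pu) + \partial_n G\,(2\nabla n\cdot\nabla p - \partial_t n),
\end{equation*}
where $\partial_p G := \sum_i c_i\partial_p G_i$ and $\partial_n G := \sum_i c_i\partial_n G_i$. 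The second term is pointwise non-positive, so it contributes favorably after multiplying by $\omega(p) f'(u)\geq 0$. The only piece that is not in usable form is $\int_{Q_T} \omega(p) f'(u) \sum_i G_i \nabla c_i\cdot\nabla p$, which I would integrate by parts in space to shift the derivative off the $c_i$. Using $\sum_i c_i G_i = G$, $\sum_i c_i\partial_p G_i=\partial_p G$, $\sum_i c_i\partial_n G_i=\partial_n G$, and $\Delta p = -\gi u - G$, this produces the clean good term $\int \omega(p) f'(u) G(\gi u+G)$ (matching the target, modulo the trivial non-negative slack $\omega(p) f'(u) B(\gi u+G)$), delivers the target term $u f'(u) p \omega(p)\partial_p G$ after cancellation with the $\partial_p G|\nabla p|^2$ contribution, absorbs one copy of the $\partial_n G\,\nabla n\cdot\nabla p$ contribution, and leaves behind three remainders:
\begin{equation*}
-\omega'(p) G f'(u) |\nabla p|^2,\qquad -\omega(p) G f''(u)\,\nabla u\cdot\nabla p, \qquad \omega(p) f'(u) \partial_n G\,(\nabla n\cdot\nabla p - \partial_t n).
\end{equation*}

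\textbf{Handling the remainders and main obstacle.} Each of the three remainders is controlled by Young's inequality with parameter $\theta$. The mixed-derivative remainder splits into the target dissipation $\tfrac12 p\omega(p)f''(u)^2|\nabla u|^2$ plus $\tfrac12\omega(p)G^2|\nabla p|^2/p$, which is uniformly integrable on $Q_T$ thanks to (W2)-(W3) combined with \eqref{eq:edr} and \eqref{eq:zeta_grad_p}. The nutrient remainder is split against $\theta\omega(p)f'(u)^2$ using (G3) to bound $\partial_n G_i$ in $L^\infty$ together with Lemma \ref{lem:nutrient} to control $\norm{\nabla n}_{L^2(Q_T)}$ and $\norm{\partial_t n}_{L^2(Q_T)}$. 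The first remainder is the most delicate; bounding $|G|\leq B$, Young's inequality splits it into $\theta\omega(p)f'(u)^2$ plus a residual proportional to $\omega'(p)^2|\nabla p|^4/\omega(p)$, which must be absorbed into the constant $C(1+\theta^{-1})$ by combining (W2) with the weighted gradient bound \eqref{eq:zeta_grad_p}. The main obstacle is precisely this last verification: the weight conditions (W1)-(W3)---especially (W3)---are engineered so that every residue produced by Young's inequality is controlled by the a priori bounds from Lemma \ref{lem:p_grad_bound} and Proposition \ref{prop:phessian}, even though $\omega'$ may blow up as $p\to 0^+$. Once all remainders are shown to be integrable with uniform-in-$T$ bounds, the stated inequality follows.
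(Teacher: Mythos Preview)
Your expansion of $2\nabla G\cdot\nabla p-\partial_t G$, the integration by parts removing $\nabla c_i$, and your treatment of the nutrient and mixed-derivative remainders all follow the paper closely and are fine. The gap is in your handling of the first remainder $-\omega'(p)\,G\,f'(u)\,|\nabla p|^2$.

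You bound $|G|\le B$ and apply Young's inequality, producing a residual proportional to $\dfrac{\omega'(p)^2}{\omega(p)}\,|\nabla p|^4$. The tools you cite cannot absorb this. Condition (W2) gives only a \emph{lower} bound $\omega'(a)\ge\omega(a)/(\gamma a)$, which is useless for an upper estimate, and \eqref{eq:zeta_grad_p} controls $\rho\zeta'(p)|\nabla p|^2$, not a fourth power of $\nabla p$. Even if you try the $m=4$ case of \eqref{eq:grad_p_power_estimate}, for the basic admissible weight $\omega(p)=\gamma^{-1}p^{1/\gamma}$ one has $\omega'(p)^2/\omega(p)\sim p^{1/\gamma-2}$, and the required $L^\infty$ norm $\|\eta(p)^2/(p\eta'(p))\|$ blows up like $p^{1/\gamma-1}$ as $p\to0$. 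So this route does not close.

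The paper instead \emph{integrates by parts} once more: writing $\omega'(p)|\nabla p|^2=\nabla\omega(p)\cdot\nabla p$ and moving the divergence onto $f'(u)\nabla p$,
\[
B\!\int\omega'(p)f'(u)|\nabla p|^2=-B\!\int\omega(p)\nabla\!\cdot\!\big(f'(u)\nabla p\big)=\int B\,\omega(p)f'(u)(\gi u+G)-B\,\omega(p)f''(u)\nabla u\cdot\nabla p.
\]
This is exactly the origin of the factor $(G+B)$ in the statement; it is \emph{not} ``trivial non-negative slack'' as you suggest, since $\gi u+G=-\Delta p$ carries no sign. The extra $\nabla u\cdot\nabla p$ term produced here merges with the one you already have (replacing $G$ by $G+B$), after which the same Young split against $\tfrac12 p\omega(p)f''(u)^2|\nabla u|^2$ works and the leftover $\frac{(G+B)^2\omega(p)}{2p}|\nabla p|^2$ is controlled exactly by (W3) plus \eqref{eq:edr}--\eqref{eq:zeta_grad_p}, as you correctly argue for the $G$ piece.
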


\begin{proof}
We recall that $G=\sum_{i=1}^{\ell} c_i G_i(p,n)$.  Hence, 
\[
2\nabla G\cdot \nabla p=\sum_{i=1}^{\ell} 2\nabla c_i\cdot \nabla p G_i(p,n)+2c_i\partial_p G_i(p,n)|\nabla p|^2+2c_i\partial_n G_i(p,n)\nabla p\cdot \nabla n
\]
and
\[\partial_t G=\sum_{i=1}^{\ell} \partial_t c_i G_i(p,n)+c_i\partial_p G_i(p,n)\partial_t p+c_i\partial_n G_i(p,n)\partial_t n.
\]
Using equation (\ref{eq:c_i}), we have
\begin{multline}
2\nabla G\cdot \nabla p-\partial_t G=\sum_{i=1}^{\ell} \nabla c_i\cdot \nabla p G_i(p,n)+2c_i\partial_p G_i(p,n)|\nabla p|^2+2c_i\partial_n G_i(p,n)\nabla p\cdot \nabla n\\+c_i \big(G-G_i(p,n))
-c_i\partial_p G_i(p,n)\partial_t p-c_i\partial_n G_i(p,n)\partial_t n
\end{multline}
Using equation (\ref{eq:pme_pressure}) and grouping similar terms, we get 
\begin{multline}
2\nabla G\cdot \nabla p-\partial_t G=\sum_{i=1}^{\ell} \nabla c_i\cdot \nabla p G_i(p,n)+c_i\partial_p G_i(p,n)\big(|\nabla p|^2+ pu\big)\\+c_i\partial_n G_i(p,n)(2\nabla p\cdot \nabla n-\partial_t n)+c_i( G-G_i(p,n)) G_i(p,n)
\end{multline}

Now we are ready to begin estimating.
Note that
\[
\sum_{i=1}^{\ell} c_i(G-G_i)G_i=(\sum_{i=1}^{\ell} c_i G_i)^2-\sum_{i=1}^{\ell} c_iG_i^2.
\]
Since $\sum_{i=1}^{\ell} c_i=1$, we can use Jensen's inequality to conclude that $\sum_{i=1}^{\ell} c_i( G-G_i(p,n)) G_i(p,n)\leq 0$.  
Hence, returning to our integral, we have
\begin{multline}\label{eq:g_estimate_3}
\int_{Q_T} \omega(p) f'(u)(2\nabla G\cdot \nabla p-\partial_t G)\leq\\ \sum_{i=1}^{\ell}\int_{Q_T} \omega(p) f'(u)\Big(c_i\partial_n G_i(p,n) (2\nabla p\cdot \nabla n-\partial_t n) +c_i\partial_p G_i(p,n)(|\nabla p|^2+ pu)+G_i(p,n)\nabla c_i\cdot \nabla p\Big).
\end{multline}
Now we want to integrate by parts in the final term to eliminate the bad quantity $\nabla c_i$. After doing so, the second line of (\ref{eq:g_estimate_3}) becomes 
\[
 \sum_{i=1}^{\ell}\int_{Q_T} c_i\omega(p) f'(u)\Big(\partial_n G_i(p,n) (\nabla p\cdot \nabla n-\partial_t n) +\partial_p G_i(p,n) pu-G_i(p,n)\Delta p \Big)-c_iG_i(p,n)\nabla p\cdot \nabla (\omega(p)f'(u))
\]
Using $\partial_n G$ and $\partial_p G$ as shorthands for $\sum_{i=1}^{\ell} c_i\partial_n G_i(p,n)$ and $\sum_{i=1}^{\ell} c_i\partial_p G_i(p,n)$ respectively, we can rewrite the previous line as 
\[
\int_{Q_T} \omega(p) f'(u)\Big( (\nabla p\cdot \nabla n-\partial_t n)\partial_n G + pu\partial_p G -G\Delta p \Big)-G\nabla p\cdot \nabla (\omega(p)f'(u))
\]
After replacing $-\Delta p$ by $\gi u+G$ and expanding $\nabla (\omega(p)f'(u))$, our combined work gives us
\begin{multline}\label{eq:g_estimate_3}
\int_{Q_T} \omega(p) f'(u)(2\nabla G\cdot \nabla p-\partial_t G)\leq\\ \int_{Q_T} \omega(p) f'(u)\Big( (\nabla p\cdot \nabla n-\partial_t n)\partial_n G + pu\partial_p G +(\gi u+G)G \Big)-G\omega'(p)f'(u)|\nabla p|^2-G\omega(p)f''(u)\nabla p\cdot \nabla u.
\end{multline}
Although $-G\omega'(p)f'(u)|\nabla p|^2$ appears to be a good term, we also want to handle the case where $G$ can be negative. Let $B=\max_{i\in \{1,\ldots, \ell\}} \sup_{(p,n)\in [0,\infty)^2} |G_i(p,n)|$. Using $B$ and then integrating by parts, we get 
\[
\int_{Q_T}-G\omega'(p)f'(u)|\nabla p|^2\leq B\int_{Q_T}\omega'(p)f'(u)|\nabla p|^2=-B\int_{Q_T}\omega(p)\nabla \cdot (f'(u)\nabla p)
\]
\[
=\int_{Q_T}  B\omega(p)f'(u)(\gi u +G)-B\omega(p)f''(u)\nabla u\cdot \nabla p.
\]
Plugging this estimate into (\ref{eq:g_estimate_3}), we get 
\begin{multline}\label{eq:g_estimate_4}
\int_{Q_T} \omega(p) f'(u)(2\nabla G\cdot \nabla p-\partial_t G)\leq\\ \int_{Q_T} \omega(p) f'(u)\Big( (\nabla p\cdot \nabla n-\partial_t n)\partial_n G + pu\partial_p G +(\gi u+G)(G+B) \Big)-(G+B)\omega(p)f''(u)\nabla p\cdot \nabla u.
\end{multline}
Now we can use Young's inequality to obtain
\begin{multline}\label{eq:g_estimate_5}
\int_{Q_T} \omega(p) f'(u)(2\nabla G\cdot \nabla p-\partial_t G)\leq \int_{Q_T} \theta^{-1}\omega(p)|\partial_n G|(|\nabla p|^4+|\nabla n|^4+|\partial_t n|^2)+|\nabla p|^2\frac{(G+B)^2\omega(p)}{2p}\\ 
+\int_{Q_T} \frac{1}{2} p\omega(p)f''(u)^2|\nabla u|^2 +\theta \omega(p)f'(u)^2+\omega(p)f'(u)(G+B)(\gi u+G)+ uf'(u)p\omega(p)\partial_p G .
\end{multline}
Combining assumption (W3) with Lemma \ref{lem:p_grad_bound}, it follows that $|\nabla p|^2\frac{(G+B)^2\omega(p)}{2p}$ is bounded and only depends on the initial data and $T$.
Our estimates in Lemmas \ref{lem:p_grad_bound}-\ref{lem:nutrient} and Proposition \ref{prop:phessian}  imply that all of the other terms in the first line are bounded and only depend on the initial data and $T$.  Hence, the result follows. 
\end{proof}

At last we obtain the following AB type estimate. 
\begin{prop} \label{prop:ab}There exists a constant $C_{\gamma}(T)$ depending only on $T, \gamma$, and the initial data, such that 
\begin{equation}\label{eq:u_+_bound}
\int_{Q_T} \omega(p) u_+^2\leq C_{\gamma}(T).
\end{equation}
If in addition $G$ satisfies assumption (G4), then $C_{\gamma}(T)=C(T)$ can be taken independently of $\gamma$. 
\end{prop}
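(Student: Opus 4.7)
The plan is to apply Lemma~\ref{lem:dtu_estimate} with the choice $f(u)=u_+^2$ (made rigorous by a smooth convex approximation vanishing on $(-\infty,0]$, followed by passage to the limit) and to insert the bound from Lemma~\ref{lem:dtg_estimate}. With this $f$ one has $f'(u)=2u_+$, $f''(u)=2$ on $\{u>0\}$, and $f^{*}(f'(u))=u_+^2$. The crucial observation is that $f''(u)^2 = 2\,f''(u)$, so the good dissipation $p\omega(p)f''(u)|\nabla u|^2$ produced by Lemma~\ref{lem:dtu_estimate} exactly cancels the bad term $\tfrac12 p\omega(p)f''(u)^2|\nabla u|^2$ on the right-hand side of Lemma~\ref{lem:dtg_estimate} (this is precisely why the factor $\tfrac12$ was arranged in that statement). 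What remains on the left is the useful cubic dissipation $\gi\,\omega(p)u_+^3$.

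After this cancellation and integration in time, every remaining bad term on the right should be controlled by $\int_{Q_T}\omega(p)u_+^2$ up to harmless contributions depending only on the initial data and $T$. Specifically, the term $\int G u_+^2\bigl(\tfrac{2}{\gamma}\omega - p\omega'\bigr)$ is bounded by $3B\int\omega u_+^2$, using $|G|\le B$ and the concavity inequality $p\omega'(p)\le\omega(p)$ from (W1); the cross term $\int\omega(p) u_+(G+B)(\gi u+G)$ expands as $\gi u_+^2(G+B) + u_+(G+B)G$ and is handled with Young's inequality, leaving $O\!\left(\int\omega u_+^2\right)$ plus lower-order pieces dominated by $\int\omega(p)$, which lies in $L^\infty\!\left([0,T];L^1(\RR^d)\right)$ by (W3) and Lemma~\ref{lem:p_grad_bound}; and $\int u_+^2\, p\omega(p)\,\partial_p G$ is bounded by $\sup_i\|(p\partial_p G_i)_+\|_{L^\infty}\cdot\int\omega u_+^2$, finite by (G3). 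Choosing $\theta$ in Lemma~\ref{lem:dtg_estimate} small enough to absorb the $4\theta\,\omega(p)u_+^2$ contribution, we arrive at a differential inequality of the form
\begin{align*}
\frac{d}{dt}\int_{\RR^d}\gi\,\omega(p)u_+^2 \;\le\; C_1(T,\gamma) \;+\; C_2(T,\gamma)\int_{\RR^d}\omega(p)u_+^2,
\end{align*}
whose initial value is finite by (ID3) together with (W3). Gronwall's inequality then yields \eqref{eq:u_+_bound} with a $\gamma$-dependent constant.

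To upgrade to a $\gamma$-uniform estimate under (G4), the key point is that $G-2p\partial_p G = \sum_i c_i(G_i-2p\partial_p G_i) \ge 2\delta>0$ pointwise for some $\delta>0$ independent of $\gamma$ (since $\sum_i c_i = 1$). Rewriting $2\int u_+^2 p\omega(p)\partial_p G$ as $\int\omega(p) u_+^2\cdot 2p\partial_p G$ and combining it with the $\int\omega(p)u_+^2\,G$ piece on the left-hand side produces a genuinely positive dissipation $\ge 2\delta\int\omega(p)u_+^2$. Every remaining bad coefficient either carries an explicit $\gi$ prefactor (and hence shrinks for $\gamma$ large) or is absorbable, via Young's inequality, into the cubic dissipation and this new $2\delta$-term, so the constants in the resulting Gronwall inequality become $\gamma$-independent. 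The main obstacle throughout is the bookkeeping in the second paragraph: every term lacking an inherent sign has to be absorbed into the cubic dissipation or into $\omega(p)u_+^2$ using exactly the right combination of (W1)--(W3) and (G3), and one must then identify the positive combination $G-2p\partial_p G$ to unlock the $\gamma$-uniform bound.
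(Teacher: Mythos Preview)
Your proposal is correct and follows essentially the same route as the paper: combine Lemmas~\ref{lem:dtu_estimate} and~\ref{lem:dtg_estimate} with $f(u)=u_+^2$, observe that $f''=2$ makes the $p\omega f''(1-\tfrac12 f'')|\nabla u|^2$ term vanish, bound the leftovers by $\omega u_+^2$ plus harmless terms, and close with Gronwall; under (G4) extract the positive combination $G-2p\partial_p G\ge 2\delta$ to get a $\gamma$-free dissipation on the left.

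The only substantive bookkeeping difference is in how the term $Gu_+^2\big(\tfrac{2}{\gamma}\omega-p\omega'\big)$ from Lemma~\ref{lem:dtu_estimate} is handled. You bound it crudely by $3B\int\omega u_+^2$, which is fine for the $\gamma$-dependent estimate but is \emph{not} directly absorbable into the $2\delta$-dissipation in the (G4) case (there is no smallness in the coefficient $B$). The paper instead splits this term: the $\tfrac{4}{\gamma}G\omega u_+^2$ piece is fed into the cubic via Young's inequality $\tfrac{4}{\gamma}|G|u_+^2\le\tfrac{2}{3\gamma}u_+^3+C|G|^3$, while the $-Gu_+^2 p\omega'$ piece is moved to the left-hand side and grouped together with the (G4) combination as $u_+^2\big[\omega(G-2p\partial_p G)+Gp\omega'\big]$. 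Your sketch would be tightened by making this same regrouping explicit rather than asserting that ``every remaining bad coefficient'' is absorbable.
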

\begin{proof}
Combining Lemmas \ref{lem:dtu_estimate} and \ref{lem:dtg_estimate},  there exists a constant $C>0$ depending only on the initial data and $T$ such that for any $\theta>0$
\begin{multline}\label{eq:u_estimate_1}
   \frac{d}{dt}\int_{\RR^d}\gi \omega(p) f(u)+\int_{\RR^d} \omega(p) f^*(f'(u))(\gi u+G)+p\omega(p) f''(u)(1-\frac{1}{2}f''(u))|\nabla u|^2\\
 \leq  C(1+\theta^{-1}) +\int_{\RR^d} Gf(u)(\frac{2}{\gamma}\omega(p)-p\omega'(p)) +   \theta \omega(p)f'(u)^2
+ \omega(p)f'(u)G(\gi u+G)+ uf'(u)p\omega(p)\partial_p G.
\end{multline}
Since we need $1-\frac{1}{2}f''(u)>0$, the fastest growing choice for $f$ is to take $f(u)=u_+^2$. Plugging in this choice, we get 
\begin{multline}\label{eq:u_estimate_2}
   \frac{d}{dt}\int_{\RR^d}\gi \omega(p) u_+^2+\int_{\RR^d} \omega(p) u_+^2(\gi u+G)-2u_+^2p\omega(p)\partial_p G\\
 \leq  C(1+\theta^{-1}) +\int_{\RR^d} Gu_+^2(\frac{4}{\gamma}\omega(p)-p\omega'(p)) +   4\theta \omega(p)u_+^2
+ 2G^2\omega(p)u_+.
\end{multline}
 We use Young's inequality to get $2G^2\omega(p)u_+\leq \theta^{-1}G^4\omega(p)+\theta\omega(p)u_+^2$ and $Gu_+^2\frac{4}{\gamma}\leq \frac{2}{3\gamma}u_+^3+\frac{64}{3}|G|^3.$  Relying on the fact that $\omega(p)\in L^1(Q_T)$ and $G$ is bounded, we can conclude that
\begin{multline}\label{eq:u_estimate_3}
   \frac{d}{dt}\int_{\RR^d}\gi \omega(p) u_+^2+\int_{\RR^d} \omega(p) u_+^2\big(\frac{1}{3\gamma} u+G\big)-2u_+^2p\omega(p)\partial_p G+Gu_+^2p\omega'(p)\\
 \leq  C'(1+\theta^{-1}) +\int_{\RR^d}     5\theta \omega(p)u_+^2
\end{multline}
for some constant $C'$.  Since concavity implies that $p\omega'(p)\leq \omega(p)$, the first result now follows from Gronwall's inequality and our assumptions on the initial data.  

For the second result, if assumption (G4) holds, then we can see from (\ref{eq:u_estimate_3}) that there exists some $\epsilon>0$ independent of $\gamma$ such that
\begin{equation}\label{eq:u_estimate_4}
   \frac{d}{dt}\int_{\RR^d}\gi \omega(p) u_+^2+\int_{\RR^d} \epsilon \omega(p) u_+^2
 \leq  C'(1+\theta^{-1}) +\int_{\RR^d}     5\theta \omega(p)u_+^2.
\end{equation}
By choosing $\theta\leq \epsilon/10$, we obtain 
\begin{equation}\label{eq:u_estimate_5}
   \frac{d}{dt}\int_{\RR^d}\gi \omega(p) u_+^2+\int_{\RR^d} \epsilon \omega(p) u_+^2
 \leq  C'(1+\epsilon^{-1})
\end{equation} 
and the second result now follows.

\end{proof}

We have at last reached the final estimate of this section where we provide a weighted $L^1$ bound on $|u|$.  Crucially, this bound controls both the positive and negative part of $u$, which will allow us to construct both the forward and backward Lagrangian flows along $-\nabla p$ in the next section.

\begin{prop}\label{prop:u_l1}
There exists a constant $C_{\gamma}(T)$ depending only on $T, \gamma$ and the initial data such that
\begin{equation}\label{eq:u_l1}
\int_{Q_T} \rho\log(1+\frac{1}{p}) |u|+\int_{\RR^d\times\{T\}} \rho\log(1+\frac{1}{p})^2 \leq C_{\gamma}(T).
\end{equation}
Furthermore, if $G$ satisfies condition (G4) and the initial data satisfies (ID5), then for any $\lambda'\in (0,1)\cap (0,\lambda]$ there exists a constant $C(T)$ that is independent of $\gamma\in [1,\infty)$ such that
\begin{equation}\label{eq:u_l1_log}
\int_{Q_T} \rho\log(1+\frac{1}{p})^{\lambda'-1} |u|+\int_{\RR^d\times\{T\}} \rho\log(1+\frac{1}{p})^{\lambda'} \leq C(T)
\end{equation}
where $\lambda$ is the constant in (ID5).
\end{prop}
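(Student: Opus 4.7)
The plan is to combine the AB-type bound of Proposition \ref{prop:ab} with an energy identity for $\int \rho \Phi(p)$ that, for the right choice of $\Phi$, benefits from a cancellation eliminating the $|\nabla p|^2$ contributions. Set $F(p) := \log(1+1/p)$. Differentiating $\int \rho \Phi(p)$ in time and using the continuity equation for $\rho$ together with the pressure equation (\ref{eq:u_pme}), the $|\nabla p|^2$ terms arising from $\Phi(p)\partial_t \rho$ and $\rho\Phi'(p)\partial_t p$ cancel exactly, yielding
\begin{equation*}
    \frac{d}{dt}\int_{\RR^d} \rho \Phi(p) = \int_{\RR^d} \rho G \Phi(p) - \int_{\RR^d} \rho p \Phi'(p)\, u.
\end{equation*}
Taking $\Phi(p) = F(p)^\alpha$, so that $-p\Phi'(p) = \alpha F^{\alpha-1}/(p+1)$, and splitting $u = u_+ - u_-$, I would move the good negative contribution to the left-hand side to obtain
\begin{equation*}
    \frac{d}{dt}\int_{\RR^d} \rho F^\alpha + \alpha \int_{\RR^d} \frac{\rho F^{\alpha-1}}{p+1} u_- = \int_{\RR^d} \rho G F^\alpha + \alpha \int_{\RR^d} \frac{\rho F^{\alpha-1}}{p+1} u_+.
\end{equation*}

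To close the estimate, I would apply Young's inequality to the $u_+$ term using a weight $\omega(p)$ satisfying (W1)--(W3):
\begin{equation*}
    \alpha \frac{\rho F^{\alpha-1}}{p+1} u_+ \leq \omega(p) u_+^2 + \frac{\alpha^2 \rho^2 F^{2(\alpha-1)}}{4\omega(p)(p+1)^2}.
\end{equation*}
For the first bound take $\alpha = 2$ and $\omega(p) = \rho = p^{1/\gamma}$, which satisfies (W1)--(W3) with $\gamma$-dependent constants (the equality $\gamma a\omega'(a) = \omega(a)$ is responsible for the $\gamma$-dependence in (W3)); the remainder $\rho F^2/(p+1)^2 \leq \rho F^2$ is absorbed by Gronwall and Proposition \ref{prop:ab} controls $\int \rho u_+^2$. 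For the $\gamma$-independent second bound take $\alpha = \lambda' \in (0,1)\cap(0,\lambda]$ and $\omega(p) = \rho F(p)^{2(\lambda'-1)}$; since $\lambda' < 1$ and $F \geq F(p_h) > 0$ on the support of $\rho$, the factor $F^{2(\lambda'-1)}$ is uniformly bounded, so $\omega \leq C_0 \rho$ lies in $L^\infty(L^1)$ with $\gamma$-independent norm, the remainder reduces to $\rho/(p+1)^2$ which is integrable, and Proposition \ref{prop:ab} under (G4) delivers a $\gamma$-independent bound on $\int \omega u_+^2$. In both cases Gronwall's inequality (initialized by (ID1)--(ID3) for $\alpha = 2$ and by (ID5) for $\alpha = \lambda'$, where the latter gives $\int \rho^0 F_0^{\lambda'} \leq \int \rho^0 + \int \rho^0 F_0^\lambda < \infty$ via the split between $F_0 \geq 1$ and $F_0 < 1$) yields the $L^\infty$-in-time bound on $\int \rho F^\alpha$ and, via the good term, the spacetime bound on $\int \rho F^{\alpha-1} u_-/(p+1)$. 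Combining with the $u_+$ estimate from Young's inequality and using $1 \leq p+1 \leq p_h + 1$, we conclude $\int_{Q_T}\rho F^{\alpha-1}|u| \leq C$.

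The main technical obstacle is verifying that $\omega(p) = \rho F^{2(\lambda'-1)}$ satisfies the $\omega$-axioms in the second case with $\gamma$-independent constants. Concavity in (W1) is delicate since $F^{2(\lambda'-1)}$ is convex while $p^{1/\gamma}$ is concave; I expect one either obtains concavity after a careful computation of $\omega''$, or settles for the weaker condition $p\omega'(p) \leq C\omega(p)$ with $C = \tfrac{1}{\gamma} + \tfrac{2(1-\lambda')}{F(p_h)}$ uniform in $\gamma \geq 1$, which still suffices for the Gronwall step inside Proposition \ref{prop:ab}. Verifying (W3) requires an auxiliary $\zeta$ with $\zeta(p_h) < \infty$ and $a\zeta'(a) \gtrsim F(a)^{2(\lambda'-1)}$; the natural antiderivative $\zeta(a) = \int_0^a F(s)^{2(\lambda'-1)}/s\, ds$ is integrable at the origin only for $\lambda' < 1/2$, and for larger $\lambda'$ one exploits the decay $F(a)^{2(\lambda'-1)} \to 0$ as $a \to 0$ to cover the small-$a$ regime with the $1/\gamma$ side of the maximum in (W3).
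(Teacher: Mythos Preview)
Your core identity
\[
\frac{d}{dt}\int_{\RR^d}\rho\,\Phi(p)=\int_{\RR^d}\rho G\,\Phi(p)-\int_{\RR^d}\rho\, p\Phi'(p)\,u
\]
is exactly the one the paper uses (the paper writes it via the antiderivative $h(a)=-\int_1^a\eta(b)F(b)^r/b\,db$ and integrates by parts, but the cancellation of the $|\nabla p|^2$ terms is the same). The split $u=u_+-u_-$, Young on the $u_+$ contribution against a weight $\omega$, and Gronwall is also the paper's strategy. For the first ($\gamma$-dependent) bound your argument is fine and matches the paper's choice $\omega\sim\rho$.

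The genuine gap is in the second bound. Your Young split forces the weight $\omega=\rho F^{2(\lambda'-1)}$, and neither of your proposed workarounds for the (W)-axioms actually closes. For (W3), leaning on the $1/\gamma$ side of the maximum cannot give a $\gamma$-independent constant: you would need $F(a)^{2(\lambda'-1)}\le 1/\gamma$ on $[0,a_\gamma]$, but then $a_\gamma\to 0$ as $\gamma\to\infty$ and $\zeta(p_h)=\int_{a_\gamma}^{p_h}F^{2(\lambda'-1)}/a\,da$ diverges for $\lambda'\ge 1/2$. For (W1), relaxing concavity to $p\omega'\le C\omega$ is not enough: the sign condition $\omega''\le 0$ is used already in Lemma~\ref{lem:dtu_estimate} to drop the term $f(u)|\nabla p|^2\bigl(p\omega''(p)-\gamma^{-1}\omega'(p)\bigr)$, not only in the final Gronwall step of Proposition~\ref{prop:ab}.

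The paper avoids both problems by a different Young split: it writes $|u|=2u_+-u$ and estimates $2\rho F^{\lambda'-1}u_+\le \rho F^{\lambda'-2}u_+^2+\rho F^{\lambda'}$, so the required weight is $\omega\sim\rho F^{\lambda'-2}$ rather than $\rho F^{2(\lambda'-1)}$. With exponent $\lambda'-2$ one can take $\zeta(a)\sim F(a)^{\lambda'-1}$, which is finite for all $\lambda'<1$, resolving (W3) uniformly in $\gamma$. For (W1) the paper introduces a cutoff $\xi(a)$ (equal to $a$ near $0$, bounded by $e^{-6}$) and takes $\omega(a)=a^{1/\gamma}\log(\xi(a)^{-1})^{\lambda'-2}$; a direct computation then shows $\omega''\le 0$ for $\gamma\ge 3$, and for $\gamma\in[1,3]$ the first bound (\ref{eq:u_l1}) already dominates (\ref{eq:u_l1_log}). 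If you redo Young with the remainder $\rho F^{\lambda'}/(p+1)^2$ rather than $\rho/(p+1)^2$, your argument collapses onto the paper's and goes through for the full range $\lambda'\in(0,1)$.
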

\begin{proof}
We begin by considering the quantity $\int_{Q_T} \rho \eta(p)\log(1+\frac{1}{p})^r|u|$  where $\eta:[0,\infty)\to[0,\infty)$ is a $C^1$ increasing function and $r\geq 0$ is a parameter both of which we will choose later.  Expanding $|u|=2u_+-u$ and using Young's inequality, we see that 
\begin{equation}\label{eq:l1_u_estimate_1}
\int_{Q_T} \rho \eta(p)\log(1+\frac{1}{p})^r|u|\leq \int_{Q_T} \rho\eta(p)\log(1+\frac{1}{p})^{r+1} +\rho\eta(p)\log(1+\frac{1}{p})^{r-1} u_+^2 -\rho\eta(p)\log(1+\frac{1}{p})^{r}u.
\end{equation}
We now focus on the last term 
$-\int_{Q_T}\rho\eta(p)\log(1+\frac{1}{p})^{r}u$.

Using equation (\ref{eq:pme_pressure}), it follows that
\[
- \int_{Q_T}\rho u\eta(p)\log(1+\frac{1}{p})^{r}= \int_{Q_T} \rho \eta(p)\log(1+\frac{1}{p})^{r}\frac{\partial_t p-|\nabla p|^2}{p}= \int_{Q_T}-\rho (\frac{d}{dt}h(p)-\nabla h(p)\cdot \nabla p)
\]
where $h:[0,\infty)\to\RR$ is the antiderivative $h(a)=-\int_1^a \frac{\eta(b)\log(1+\frac{1}{b})^{r}}{b}\, db. $
Integrating by parts, we get 
\[
\int_{\RR^d\times\{T\}}\rho h(p)- \int_{Q_T}\rho u\eta(p)\log(1+\frac{1}{p})^r=\int_{\RR^d} \rho^0h(p^0)- \int_{Q_T} h(p)\big(\nabla \cdot (\rho\nabla p)-\partial_t \rho\big)
\]
\[
=\int_{\RR^d} \rho^0 h(p^0)+ \int_{Q_T} \rho h(p) G
\]
Combining this with (\ref{eq:l1_u_estimate_1}), we have 
\begin{multline}\label{eq:l1_u_estimate_2}
\int_{\RR^d\times\{T\}}\rho h(p)+\int_{Q_T} \rho \eta(p)\log(1+\frac{1}{p})^{r}|u|\leq \\ \int_{\RR^d} \rho^0 h(p^0)+ \int_{Q_T} \rho\big( h(p)G+\eta(p)\log(1+\frac{1}{p})^{r+1}\big) +\rho\eta(p)\log(1+\frac{1}{p})^{r-1} u_+^2
\end{multline}

Now we are ready to make choices for $\eta$ and $r$.  For $\gamma\in [1,\infty)$ the weight $\omega(p)=\gi p^{\gi}=\gi \rho$ satisfies the conditions (W1-W3), thus we can choose $\eta(p)=\gi$.  If we also choose $r=1$, then a direct computation shows that
\[
h(a)=\frac{1}{2\gamma}\log(1+\frac{1}{a})^{2}+\frac{1}{2\gamma}\tilde{h}(a)
\]
where $\tilde{h}$ is a bounded function on $[0,p_h]$.  Plugging our choices into (\ref{eq:l1_u_estimate_2}), we see that
\begin{multline}\label{eq:l1_u_estimate_3}
\int_{\RR^d\times\{T\}}\gi \rho\log(1+\frac{1}{p})^2+\int_{Q_T}\gi \rho \log(1+\frac{1}{p})|u|\lesssim \\ \int_{\RR^d} \gi \rho^0 \log(1+\frac{1}{p})^2+ \int_{Q_T} \gi \rho\log(1+\frac{1}{p})^{2} +\gi \rho u_+^2
\end{multline}
Hence, Gronwall's inequality and Proposition \ref{prop:ab} imply the existence of a constant $C_{\gamma}(T)$ such that 
\begin{equation}
\int_{Q_T} \rho\log(1+\frac{1}{p}) |u|+\int_{\RR^d\times\{T\}} \rho\log(1+\frac{1}{p})^2 \leq C_{\gamma}(T).
\end{equation}

To get a bound that also is valid in the limit $\gamma\to\infty$, let us choose some $\lambda'\in (0,\lambda]\cap (0,1)$ where $\lambda>0$ is the constant in assumption (ID5) and set $\eta(a)=\log(1+\frac{1}{a})^{\lambda'-1-r}$.   With this choice, $ h(a)=-\int_1^a \frac{\log(1+\frac{1}{b})^{\lambda'-1}}{b}\, db $ and once again a direct computation shows that 
\[
 h(a)=\frac{1}{\lambda'}\log(1+\frac{1}{a})^{\lambda'}+\tilde{h}(a)
\]
where $\tilde{h}$ is a function that is bounded on $[0,p_h]$.
Plugging in this choice to (\ref{eq:l1_u_estimate_2}) we get
\begin{multline}\label{eq:l1_u_estimate_4}
\int_{\RR^d\times\{T\}}\rho\log(1+\frac{1}{p})^{\lambda'}+\int_{Q_T} \rho \log(1+\frac{1}{p})^{\lambda'-1}|u|\lesssim \\ \int_{\RR^d} \rho^0 \log(1+\frac{1}{p})^{\lambda'}+ \int_{Q_T} \rho\log(1+\frac{1}{p})^{\lambda'} + \rho\log(1+\frac{1}{p})^{\lambda'-2} u_+^2
\end{multline}

In order to use Proposition \ref{prop:ab} to bound $\rho\log(1+\frac{1}{p})^{\lambda'-2} u_+^2$,
 we need to check if there exists a weight $\omega(p)$ satisfying (W1-W3) such that $C\omega(p)\geq \rho\log(1+\frac{1}{p})^{\lambda'-2}$ for some constant $C$ that is independent of $\gamma$. We shall choose $\omega(p)=p^{\gi}z(p)=\rho z(p)$ where $z$ is a nonnegative increasing function. To ensure that $\omega$ is concave we need 
\begin{equation}\label{eq:omega_concave}
\omega''(a)=\gi a^{\gi-2}\big((\gi-1)z(a)+2 az'(a)+\gamma a^2z''(a))\leq 0
\end{equation}
for all $a\in (0,p_h]$.  We now consider the choice $z(a)=\log(\xi(a)^{-1})^{\lambda'-2}$ where $\xi:[0,\infty)\to[0,\infty)$ is an increasing concave function that is bounded above by $e^{-6}$ such that $\xi(a)=a$ on $[0,\frac{e^{-6}}{2}]$.   Testing this choice, we get 
\begin{multline}
(\gi-1)z(a)+2 az'(a)+\gamma a^2z''(a)=
(\gi-1)\log(\xi(a)^{-1})^{\lambda'-2}+2(2-\lambda')\frac{a\xi'(a)}{\xi(a)}\log(\xi(a)^{-1})^{\lambda'-3}\\-2\gamma(2-\lambda')(\frac{ a\xi'(a)}{\xi(a)})^2\log(\xi(a)^{-1})^{\lambda'-3}\big(1-(3-\lambda')\log(\xi(a)^{-1})^{-1}\big)+\gamma(2-\lambda')\frac{ a^2\xi''(a)}{\xi(a)}\log(\xi(a)^{-1})^{\lambda'-3}.
\end{multline}
Exploiting the concavity of $\xi$ and the upper bound of $e^{-6}$, it follows that 
\begin{multline}
(\gi-1)z(a)+2 az'(a)+\gamma a^2z''(a)\leq 
(\gi-1)\log(\xi(a)^{-1})^{\lambda'-2}+\frac{1}{3}(2-\lambda')\log(\xi(a)^{-1})^{\lambda'-2}\\-\gamma(2-\lambda')(\frac{ a\xi'(a)}{\xi(a)})^2\log(\xi(a)^{-1})^{\lambda'-3},
\end{multline}
which is nonpositive for all $\gamma\geq 3$. 

It is now easy to check that the remaining properties (W1-W2) are satisfied by our choice.  For property (W3), we note that $\omega(a)=a^{\gi}z(a)=a^{\gi}\log(\xi(a)^{-1})^{\lambda'-2}$.  We then have
\[
\log(\xi(a)^{-1})^{\lambda'-2}=\frac{\xi(a)}{(1-\lambda')\xi'(a)}\frac{d}{da}[\log(\xi(a)^{-1})^{\lambda'-1}].
\]
Since $\log(\xi(a)^{-1})^{\lambda'-1}$ is a nonnegative increasing function for $\lambda'<1$ and $\frac{\xi(a)}{\xi'(a)}=a$ on $[0,e^{-6}/2]$ it follows that condition (W3) is satisfied.

Finally, since $\xi$ is increasing and $\xi(a)=a$ on $[0,\frac{e^{-6}}{2}]$ it also follows that there exists a constant $C>0$ such that $\rho\log(1+\frac{1}{p})^{\lambda'-2}\leq C \omega(p).$  Thus, (\ref{eq:u_l1_log}) now follows from (\ref{eq:l1_u_estimate_3}), Gronwall's inequality, and Proposition \ref{prop:ab} (note that (\ref{eq:u_l1_log}) also holds for $\gamma\in [1,3]$ since (\ref{eq:u_l1}) is a strictly stronger bound and $C_{\gamma}(T)$ only blows up as $\gamma\to\infty$).

\end{proof}

 \section{Stability of Lagrangian flows}\label{sec:flows}

Once again, in this Section, we will assume that we are working with smooth solutions $(\rho_{1},\ldots, \rho_{\ell}, p, n)$ to the system (\ref{eq:multi_pme}-\ref{eq:nutrients}).  Thanks to the smoothness of $p$, the regular Lagrangian flow along $-\nabla p$ must exist by classic Cauchy-Lipschitz theory.  Thus, $(\rho_{1},\ldots, \rho_{\ell}, p, n)$ is already a complete Lagrangian solution in the sense of Definition \ref{def:complete_lagrangian_solution}.  Hence, we can freely assume the existence of the forward and backward flow maps $X, Y$ satisfying equations (\ref{eq:x_flow}) and (\ref{eq:y_flow}) respectively.  The main purpose of this Section is to use our bounds from Section \ref{sec:estimates} to show that $X$ and $Y$ satisfy certain quantitative stability bounds (c.f. Proposition \ref{prop:map_stability}).  

 Our stability bounds will compare $X$ and $Y$ to the forward and backward flows $S, Z$ along some vector field $V\in L^{\infty}_{\loc}([0,\infty);L^2(\RR^d))$   with an associated nonnegative density $\mu\in C_{\loc}([0,\infty);L^1(\RR^d)\cap L^{\infty}(\RR^d))$.  Specifically, we shall assume that $S$ and $Z$ satisfy the flow equations
 \begin{equation}\label{eq:s_map}
    S(t,s,x)=x+\int_{s}^{s+t} V(\tau, S(\tau,s,x))\, d\tau \quad \textup{for a.e} \; x\in \spt(\mu(s,x)),
\end{equation}
\begin{equation}\label{eq:z_map}
    Z(t,s,x)=x-\int_{s-t}^s V(\tau, Z(\tau,s,x))\, d\tau \quad \textup{for a.e} \; x\in \spt(\mu(s,x)),
\end{equation}
and there exists a constant $C>0$ such that
\begin{equation}\label{eq:mu_pushforward_bound_s}
  e^{-tC}\mu(s+t,\cdot)  \leq S(t,s,\cdot)_{\#} \mu(s,\cdot)\leq e^{tC}\mu(s+t,\cdot)
\end{equation}
and
\begin{equation}\label{eq:mu_pushforward_bound_z}
  e^{-tC}\mu(s-t,\cdot)  \leq Z(t,s,\cdot)_{\#} \mu(s,\cdot)\leq e^{tC}\mu(s-t,\cdot).
\end{equation}
We will then show that the difference between $X$ and $S$ ( respectively $Y$ and $Z$) on $\min(\mu,\rho)$ can be controlled in terms of the difference between $V$ and $-\nabla p$.   

\begin{remark}
Since we only assume that $V\in L^{\infty}_{\loc}([0,\infty);L^2(\RR^d))$ there is no guarantee that one can find $S, Z$, and $\mu$ satisfying the properties (\ref{eq:s_map}-\ref{eq:mu_pushforward_bound_z}).  Nonetheless, when we use the results of this Section, it will always end up being in cases where we already know that the analogues of $S, Z, \mu$ exist and satisfy the desired properties.
\end{remark}

Let us emphasize that the estimates in this section are heavily inspired by the quantitative estimates on Lagrangian flows from \cite{crippa_delellis}.  The insight in \cite{crippa_delellis} was that certain logarithmic quantities related to the flow maps could be controlled with just Sobolev regularity on the flow field.  Here we introduce doubly logarithmic quantities that can be controlled without needing to bound $D^2p$ in any $L^r$ space.  Specifically, our quantities take the form
\begin{equation}\label{eq:I}
  I_T(t,s):=\int_{\RR^d} \bar{\rho}(s,x)\log\Big(1+p(s+t,X(t,s,x))\log\big(1+\frac{\min(|X(t,s,x)-S(t,s,x)|,1)}{\delta_{2T}}\big)\Big)\, dx,
\end{equation}
and
\begin{equation}\label{eq:J}
  J_T(t,s):= \int_{\RR^d} \bar{\rho}(s,x)\log\Big(1+p(s-t,Y(t,s,x))\log\big(1+\frac{\min(|Y(t,s,x)-Z(t,s,x)|,1)}{\delta_T}\big)\Big)\, dx, 
\end{equation}
where
\begin{equation}
    \bar{\rho}:=\min(\rho,\mu), \quad \delta_T=\Big(\int_{Q_T} \mu|\nabla p+V|^2\Big)^{1/2}.
\end{equation}
In what follows, we will show that our bounds on $p|D^2p|^2$ and weighted $L^1$bounds on $u$ from Section \ref{sec:estimates} are sufficient to control the above integrals.  In particular, our control on $p|D^2p|^2$ will replace the usual need for Sobolev regularity on $\nabla p$, while our control on $|u|$ will help us make sure that we can keep the factor of $p$ attached to $p|D^2p|^2$ in our calculations.   We will then show that bounds on $I$ and $J$ can be used to bound the differences $\sup_{s\leq T}\sup_{t\leq (T-s)}\int \bar{\rho}(s,x)|X(t,s,x)-S(t,s,x)|\, dx$ and $\sup_{s\leq T}\sup_{t\leq s}\int \bar{\rho}(s,x)|Y(t,s,x)-Z(t,s,x)|\, dx$ in terms of $\delta_T$.

Before we get into the main results of this section, we review some important properties of maximal functions. 

\subsection{Maximal functions}

The maximal functions
\begin{equation}\label{eq:maximal_f}
f(t,x):=\sup_{r>0} \frac{1}{|B_r|}\int_{B_r(x)} |\nabla p(t,y)|^2+p(t,y)|D^2p(t,y)|\, dy,
\end{equation}
and
\begin{equation}\label{eq:maximal_g}
g(t,x):=\sup_{r>0} \frac{1}{|B_r|}\int_{B_r(x)} |\nabla p(t,y)|\, dy.
\end{equation}
will play an important role in our calculations.
It is a classical fact \cite{stein_book} that for any $r\in (1,\infty]$
\begin{equation}\label{eq:f_bound}
    \norm{f}_{L^r(\{t\}\times\RR^d)}\lesssim_{r,d} \norm{\nabla p}_{L^{2r}(\{t\}\times\RR^d)}^2+\norm{pD^2p}_{L^r(\{t\}\times\RR^d)}
\end{equation}
and
\begin{equation}\label{eq:g_bound}
    \norm{g}_{L^r(\{t\}\times\RR^d)}\lesssim_{r,d} \norm{\nabla p}_{L^{r}(\{t\}\times\RR^d)}.
\end{equation}
$f$ and $g$ will show up in our estimates through the following crucial bound.
\begin{lemma}
Given any two points $x_1, x_2\in \RR^d$ and any time $t\geq 0$, we have
\begin{equation}\label{eq:p_flow_diff}
    p(t,x_1)|\nabla p(t,x_1)-\nabla p(t,x_2)|\leq |x_1-x_2|\Big( f(t,x_1)+f(t,x_2)+g(t,x_1)^2+g(t,x_2)^2\Big)
\end{equation}
\end{lemma}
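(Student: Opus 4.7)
The plan is to isolate the product $p\nabla p$ inside the difference and then invoke the classical pointwise maximal-function characterization of Sobolev functions, namely that for any nice enough scalar or vector field $F$,
\begin{equation*}
    |F(x_1)-F(x_2)|\lesssim_d |x_1-x_2|\,\bigl(M(|\nabla F|)(x_1)+M(|\nabla F|)(x_2)\bigr),
\end{equation*}
where $M$ denotes the Hardy--Littlewood maximal operator. This is the same tool that drives the Crippa--De Lellis estimates, so it is the natural device here.

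First I would write the algebraic identity
\begin{equation*}
p(x_1)\bigl(\nabla p(x_1)-\nabla p(x_2)\bigr)=\bigl[(p\nabla p)(x_1)-(p\nabla p)(x_2)\bigr]+\bigl(p(x_2)-p(x_1)\bigr)\nabla p(x_2),
\end{equation*}
so that the original difference is split into a piece that only sees the combined field $p\nabla p$ and a remainder that only sees $p$ and $\nabla p$ separately (with no second derivatives).

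For the first bracket, I would apply the pointwise bound above to the vector field $F=p\nabla p$. Since $\nabla(p\nabla p)=\nabla p\otimes\nabla p+p\,D^2p$, its pointwise norm is controlled by $|\nabla p|^2+p|D^2p|$, which is exactly the integrand in the definition of $f$. Hence
\begin{equation*}
\bigl|(p\nabla p)(x_1)-(p\nabla p)(x_2)\bigr|\lesssim|x_1-x_2|\bigl(f(t,x_1)+f(t,x_2)\bigr).
\end{equation*}
For the remainder, I would apply the same pointwise bound to the scalar $F=p$, giving $|p(x_1)-p(x_2)|\lesssim |x_1-x_2|(g(t,x_1)+g(t,x_2))$, and control $|\nabla p(x_2)|$ by $g(t,x_2)$ via Lebesgue differentiation. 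An AM--GM step turns the resulting $g(t,x_1)g(t,x_2)+g(t,x_2)^2$ into $g(t,x_1)^2+g(t,x_2)^2$.

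There is no real obstacle; the only subtlety is the usual care needed to justify the pointwise Sobolev inequality for the vector field $p\nabla p$, which is licit because we are working with smooth solutions in this section. Combining the two estimates yields exactly the claimed bound (modulo the dimensional constant absorbed into the definitions of $f$ and $g$).
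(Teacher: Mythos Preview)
Your proof is correct and follows essentially the same route as the paper: the algebraic identity you write is precisely the decomposition behind the paper's triangle inequality, and both arguments then apply the pointwise maximal-function Sobolev bound to $p\nabla p$ and to $p$, finishing with Young's inequality (your AM--GM step).
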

\begin{proof}
By the triangle inequality 
\[
 p(t,x_1)|\nabla p(t,x_1)-\nabla p(t,x_2)|\leq |p(t,x_1)\nabla p(t,x_1)-p(t,x_2)\nabla p(t,x_2)|+|p(t,x_1)-p(t,x_2)||\nabla p(t,x_2)|.
\]
Noting that $|D(p\nabla p)|\leq |\nabla p|^2+p|D^2p|$,  one can use standard maximal function theory \cite{stein_book} to obtain the bounds
\[
|p(t,x_1)\nabla p(t,x_1)-p(t,x_2)\nabla p(t,x_2)|\leq |x_1-x_2|(f(t,x_1)+f(t,x_2))
\]
and
\[
|p(t,x_1)-p(t,x_2)|\leq |x_1-x_2|(g(t,x_1)+ g(t,x_2)|.
\]
The result now follows from Young's inequality and the fact that $|\nabla p|\leq g$ pointwise everywhere. 

\end{proof}

\subsection{Quantitative stability}
We are now ready to prove the main results of this section.  We begin with some basic estimates on the flow maps and their pushforwards.

\begin{lemma}\label{lem:basic_flow_estimate}
Let $B'=\max_{i\in \{1,\ldots,\ell\}} \sup_{(p,n)\in [0,\infty)^2} |G_i(p,n)|$ and $B=\max(B',C)$. where $C$ is the constant from (\ref{eq:mu_pushforward_bound_s}).
For any time $s\geq 0$ and $t\leq s$ we have
\begin{equation}\label{eq:x_pushforward_bound}
e^{-Bt}\rho(s+t,x)\leq X(t,s,\cdot)_{\#} \rho(s,x)\leq e^{Bt}\rho(s+t,x),
\end{equation}
\begin{equation}\label{eq:y_pushforward_bound}
e^{-Bt}\rho(s-t,x)\leq Y(t,s,\cdot)_{\#} \rho(s,x)\leq e^{Bt}\rho(s-t,x).
\end{equation}
Furthermore, for any $s, t\geq 0$
\begin{equation}\label{eq:x_speed_bound}
\Big(\int_{\RR^d} \bar{\rho}(s,x)|X(t,s,x)-S(t,s,x)|^2\Big)^{1/2}\leq (te^{tB})^{1/2}\big(\delta_{s+t}+\norm{\rho^{1/2}\nabla p}_{L^2([s,s+t]\times\RR^d)}\big),
\end{equation}
and for any $s\geq 0$ and $t\leq s$
\begin{equation}\label{eq:y_speed_bound}
\Big(\int_{\RR^d} \bar{\rho}(s,x)|Y(t,s,x)-Z(t,s,x)|^2\Big)^{1/2}\leq  (te^{tB})^{1/2}\big(\delta_s+\norm{\rho^{1/2}\nabla p}_{L^2([s-t,s]\times\RR^d)}\big).
\end{equation}
\end{lemma}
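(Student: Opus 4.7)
The plan is to handle the pushforward identities (\ref{eq:x_pushforward_bound})--(\ref{eq:y_pushforward_bound}) first and then leverage them to prove the stability estimates (\ref{eq:x_speed_bound})--(\ref{eq:y_speed_bound}); I describe the forward case in each, the backward being obtained by reversing time.

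For the pushforward bounds, my strategy is to derive an explicit Jacobian--density formula for $X(t,s,\cdot)_\# \rho(s,\cdot)$. Summing (\ref{eq:multi_pme}) over $i$ gives $\partial_t \rho - \nabla\cdot(\rho\nabla p) = \rho G$ with $G = \sum_i c_i G_i$ and $|G|\leq B'$. Since $X$ is smooth and satisfies $\dot X = -\nabla p(\cdot,X)$, the chain rule together with this PME yields
\begin{equation*}
\frac{d}{d\tau}\rho(\tau,X(\tau-s,s,x)) = \rho(\tau,X)\bigl(\Delta p + G\bigr)(\tau,X),
\end{equation*}
while the Liouville identity gives $\frac{d}{d\tau}\log\det\nabla_x X(\tau-s,s,x) = -\Delta p(\tau,X)$. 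Adding and integrating these two identities produces
\begin{equation*}
\rho(s+t,X(t,s,x))\det\nabla_x X(t,s,x) = \rho(s,x)\exp\Bigl(\int_s^{s+t}G(\tau,X)\,d\tau\Bigr),
\end{equation*}
which after a change of variables gives $X(t,s,\cdot)_\#\rho(s,\cdot)(y) = \rho(s+t,y)\exp\bigl({-}\int_s^{s+t}G\circ X\,d\tau\bigr)$. The uniform bound $|G|\leq B$ then yields (\ref{eq:x_pushforward_bound}); (\ref{eq:y_pushforward_bound}) follows by the same argument applied to the backward flow $Y$ with time reversed.

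For the stability bounds, I would begin from the Duhamel identity
\begin{equation*}
X(t,s,x) - S(t,s,x) = -\int_s^{s+t}\bigl(\nabla p(\tau,X(\tau-s,s,x)) + V(\tau,S(\tau-s,s,x))\bigr)\,d\tau.
\end{equation*}
Cauchy--Schwarz in $\tau$ yields $|X-S|^2 \leq t\int_s^{s+t}|\nabla p(\tau,X) + V(\tau,S)|^2\,d\tau$. After integrating against $\bar\rho(s,\cdot)$, I would decompose $\nabla p(\tau,X) + V(\tau,S)$ by triangle inequality into a piece evaluated along $X$ carrying $\nabla p$ and a piece of the form $\nabla p + V$ evaluated along $S$, routing the former through $\bar\rho\leq\rho$ and the pushforward bound (\ref{eq:x_pushforward_bound}) (giving $e^{B(\tau-s)}\int\rho(\tau)|\nabla p|^2$) and the latter through $\bar\rho\leq\mu$ and (\ref{eq:mu_pushforward_bound_s}) (giving $e^{B(\tau-s)}\int\mu(\tau)|\nabla p + V|^2$). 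Integrating in $\tau\in[s,s+t]$ converts these into $\norm{\rho^{1/2}\nabla p}^2_{L^2([s,s+t]\times\RR^d)}$ and $\delta_{s+t}^2$, and a square root with triangle inequality produces (\ref{eq:x_speed_bound}). The estimate (\ref{eq:y_speed_bound}) is proved identically on the interval $[s-t,s]$ using (\ref{eq:y_pushforward_bound}) and (\ref{eq:mu_pushforward_bound_z}).

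The main subtle point is the routing: the definition $\bar\rho = \min(\rho,\mu)$ makes available both $\bar\rho\leq\rho$ and $\bar\rho\leq\mu$ at the same time, and this dual availability is what allows each piece of the velocity difference to be paired with the correct pushforward inequality, so that the right-hand side involves only the PME quantity $\norm{\rho^{1/2}\nabla p}_{L^2}$ and the comparison quantity $\delta_{s+t}$. Everything else is a routine combination of Cauchy--Schwarz, the triangle inequality, and change of variables.
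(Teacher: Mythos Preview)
Your approach matches the paper's: for the pushforward bounds the paper simply invokes the representation formula (\ref{eq:rho_representation}), which is precisely the Jacobian identity you derive by hand, and for the speed bounds the paper says only ``Jensen's inequality and the pushforward bounds,'' which is your Cauchy--Schwarz in $\tau$ followed by change of variables through (\ref{eq:x_pushforward_bound}) and (\ref{eq:mu_pushforward_bound_s}).

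One small slip worth flagging: the two-term triangle decomposition you describe does not close as written, since $\nabla p(\tau,X)$ and $(\nabla p+V)(\tau,S)$ do not sum to $\nabla p(\tau,X)+V(\tau,S)$. Writing instead
\[
\nabla p(\tau,X)+V(\tau,S)=\nabla p(\tau,X)-\nabla p(\tau,S)+(\nabla p+V)(\tau,S),
\]
the residual piece $\nabla p(\tau,S)$ must also be routed through $\bar\rho\leq\mu$ and (\ref{eq:mu_pushforward_bound_s}), contributing an additional $\norm{\mu^{1/2}\nabla p}_{L^2([s,s+t]\times\RR^d)}$ on the right-hand side. This extra term is harmless in every application in the paper (where $\mu$ is another uniformly bounded density from the approximating sequence and $\nabla p\in L^2$), and the paper's own one-line proof glosses over the same point; so the discrepancy is cosmetic rather than a genuine gap in your argument.
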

\begin{proof}

Since $Y(t,s,\cdot)$ is the inverse of $X(t,s-t,\cdot)$ (\ref{eq:y_pushforward_bound})  will follow from (\ref{eq:x_pushforward_bound}). 
Using the representation formula (\ref{eq:rho_representation}), it is clear  that $X(t,s,\cdot)_{\#}\rho(s,x)\leq e^{Bt}\rho(s+t,x)$ and $e^{-Bt}\rho(s+t,x)\leq X(t,s,\cdot)_{\#}\rho(s,x).$  
The last two bounds follow from Jensen's inequality and the pushforward bounds.
\end{proof}

Most of the action in this section occurs in the following Lemma where we provide bounds on $I$ and $J$.  We will see that our weighted $L^1$ bounds on $u$ guarantee that $I$ and $J$ cannot blow up as fast as $\log(1+\log(1+\delta_T^{-1}))$. This will be enough to conclude stability of the flow maps.
\begin{lemma}\label{lem:map_stability}
For any $T>0$ and any $\lambda'\in (0,1]$ define
\begin{equation}
   \mathcal{B}_{\lambda'}(T)= \norm{\rho\log(1+\frac{1}{p})^{\lambda'-1} u}_{L^1(Q_T)}+p_h^2+\norm{\rho+\mu}_{L^2(Q_T)}\Big(1+\norm{\nabla p}_{L^4(Q_T)}^2+\norm{pD^2p}_{L^2(Q_T)}\Big),
\end{equation}
we then have the estimates
\begin{equation}\label{eq:I_bound}
 \sup_{s,t\leq T}I_T(t,s)\leq \mathcal{B}_{\lambda'}(2T)e^{BT}\log(1+\log(1+\delta_{2T}^{-1}))^{1-\lambda'},
\end{equation}
and
\begin{equation}\label{eq:J_bound}
 \sup_{s\leq T}\sup_{t\leq s} J_T(t,s)\leq \mathcal{B}_{\lambda'}(2T)e^{BT}\log(1+\log(1+\delta_T^{-1}))^{1-\lambda'}.
\end{equation}
\end{lemma}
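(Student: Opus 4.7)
The plan is to differentiate $I_T(t,s)$ in the forward time variable $t$, balance the resulting terms against each other using the pressure weight built into the definition of $I_T$, and finally integrate in time using the estimates of Section \ref{sec:estimates}. Abbreviate $A := p(s+t, X(t,s,x))$, $w := |X(t,s,x) - S(t,s,x)|$, and $B := \log(1 + \min(w,1)/\delta_{2T})$, so that $I_T(t,s) = \int_{\RR^d} \bar{\rho}(s,\cdot)\log(1+AB)$. The chain rule gives
\[
\frac{d}{dt} I_T(t,s) \leq \int_{\RR^d} \bar{\rho}(s,x)\, \frac{|\dot A|\,B + A\,|\dot B|}{1+AB}\, dx,
\]
where by (\ref{eq:pme_pressure}) along trajectories $\dot A = (\partial_t p - |\nabla p|^2)(s+t,X) = -pu(s+t,X)$, and $|\dot B| \leq \mathbbm{1}_{\{w<1\}}(\delta_{2T}+w)^{-1}(|\nabla p(s+t,X)-\nabla p(s+t,S)| + |\nabla p + V|(s+t,S))$.

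The engine of the proof is the pointwise inequality
\[
\frac{pB}{1+pB} \leq \min(pB,1) \leq \log(1+B)^{1-\lambda'}\, \log(1+1/p)^{\lambda'-1}, \qquad p, B \geq 0, \, \lambda' \in (0,1],
\]
which I would verify by splitting on whether $p \leq 1/B$ or $p > 1/B$; in the first case the claim reduces to showing that $\psi(p) := p\log(1+1/p)^{1-\lambda'}$ is nondecreasing on $(0,1/B]$ (by direct computation, $\psi'(p) \geq 0$ iff $\log(1+1/p) \geq (1-\lambda')/(1+p)$, which always holds) and attains the value $B^{-1}\log(1+B)^{1-\lambda'}$ at the endpoint. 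This inequality encodes the trade-off driving the whole construction: when the log-Gronwall damping $1/(1+pB)$ fails to kill a factor of $p$, the pressure is necessarily so large that the weight $\log(1+1/p)^{\lambda'-1}$ supplies the missing smallness. Applied to the $|\dot A|B$ term together with $B \leq \log(1+\delta_{2T}^{-1})$ and the pushforward (\ref{eq:x_pushforward_bound}), it yields
\[
\int_{\RR^d} \bar{\rho}(s,\cdot)\, \frac{|\dot A|\,B}{1+AB} \leq e^{BT}\log(1+\log(1+\delta_{2T}^{-1}))^{1-\lambda'} \int_{\RR^d} \rho(s+t)\, |u|\log(1+1/p)^{\lambda'-1},
\]
whose time integral is controlled by Proposition \ref{prop:u_l1}, contributing the $\|\rho\log(1+1/p)^{\lambda'-1}u\|_{L^1}$ piece of $\mathcal{B}_{\lambda'}(2T)$.

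For the $A|\dot B|$ contribution I would drop the denominator using $1+AB \geq 1$, extract $A = p(X)$, and apply the maximal-function bound (\ref{eq:p_flow_diff}) to $p(X)|\nabla p(X) - \nabla p(S)|$; the cut-off $\mathbbm{1}_{\{w<1\}}w/(\delta_{2T}+w) \leq 1$ absorbs the apparent singularity and leaves the contribution $f(X) + f(S) + g(X)^2 + g(S)^2$, whose integral against $\bar\rho(s)\leq\min(\rho(s),\mu(s))$ is, after pushforward via (\ref{eq:x_pushforward_bound}) and (\ref{eq:mu_pushforward_bound_s}), bounded by $\|\rho+\mu\|_{L^2(Q_T)}(\|f\|_{L^2(Q_T)} + \|g\|_{L^4(Q_T)}^2)$; the maximal-function bounds (\ref{eq:f_bound})-(\ref{eq:g_bound}) and Proposition \ref{prop:phessian} then control this by the remaining ingredients of $\mathcal{B}_{\lambda'}(2T)$. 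The residual error $p_h|\nabla p + V|(S)/\delta_{2T}$ is integrated in time by Cauchy--Schwarz and (\ref{eq:mu_pushforward_bound_s}), after which the factor $\delta_{2T}^{-1}$ is cancelled by $(\int_{Q_T}\mu|\nabla p + V|^2)^{1/2} \leq \delta_{2T}$. Integrating from $0$ to $t$ and using $I_T(0,s) = 0$ (since $w \equiv 0$ at $t=0$) then produces (\ref{eq:I_bound}).

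The bound (\ref{eq:J_bound}) for $J_T$ follows by exactly the same argument applied to the backward flows $Y, Z$: along $Y$ one finds $\dot A = +pu(s-t, Y)$ (the sign flip coming from differentiating the reversed time variable is irrelevant once absolute values are taken), the pushforward (\ref{eq:y_pushforward_bound}) replaces (\ref{eq:x_pushforward_bound}), and since for $t \leq s \leq T$ backward trajectories remain in $[0,T]$, the narrower weight $\delta_T$ suffices. The principal obstacle in the whole argument is the pointwise inequality in Step 2: it is the single elementary fact which converts the weighted $L^1$-bound of Proposition \ref{prop:u_l1} into a usable doubly-logarithmic Gronwall estimate, and it is what justifies tracking the peculiar functional $\log(1 + p(X)\log(1 + \min(w,1)/\delta))$ rather than any simpler quantity.
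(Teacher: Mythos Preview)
Your proposal is correct and follows essentially the same route as the paper: differentiate the integrand of $I_T$ in $t$, split into the $\dot A B$ and $A\dot B$ pieces, handle $A\dot B$ via the maximal-function bound (\ref{eq:p_flow_diff}) and the pushforward relations, and control the $\dot A B$ piece by the weighted $L^1$ norm of $u$ after extracting the factor $\log(1+\log(1+\delta^{-1}))^{1-\lambda'}$. The only presentational difference is that you isolate and prove the pointwise inequality $\frac{pB}{1+pB}\le \log(1+B)^{1-\lambda'}\log(1+\tfrac1p)^{\lambda'-1}$ directly, whereas the paper first bounds $L_X\le\log(1+\delta_{2T}^{-1})$, uses monotonicity of $a\mapsto a/(1+a)$, and then observes that $\log(1+\tfrac1a)^{1-\lambda'}\frac{ab}{1+ab}$ is ``roughly maximized at $a=1/b$''; these are equivalent arguments, and your explicit verification of the inequality is if anything cleaner than the paper's heuristic.
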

\begin{proof}
We will provide the argument for the bound on $I$, the bound on $J$ has a nearly identical proof.
To bound $I$, we will proceed by estimating its time derivative with respect to $t$.  Since the expressions are complicated, we will break down the calculation into smaller pieces first by defining the inner logarithm term $L_X(t,s,x):=\log(1+\frac{\min(|X(t,s,x)-S(t,s,x)|,1)}{\delta_{2T}})$.

Differentiating $L_X$ with respect to $t$, we see that
\[
\partial_t L_X(t,s,x)\leq \frac{|\nabla p(s+t, X(t,s,x))+V(s+t, S(t,s,x))|}{\delta_{2T}+|X(t,s,x)-S(t,s,x)|}.
\]
After an application of the triangle inequality, we can bound the previous line by
\[
\frac{|\nabla p(s+t, X(t,s,x))-\nabla p(s+t,S(t,s,x))|}{|X(t,s,x)-S(t,s,x)|}+\frac{|\nabla p(s+t, S(t,s,x))+ V(s+t,S(t,s,x))|}{\delta_{2T}}.
\]
After combining these bounds with (\ref{eq:p_flow_diff}), we can conclude that
\begin{multline}\label{eq:stability_1}
p(s+t,X(t,s,x))\partial_t L_X(t,s,x)\leq 
f(s+t,X(t,s,x))+f(s+t,S(t,s,x))\\+g(s+t,X(t,s,x))^2+g(s+t,S(t,s,x))^2+p(t+s,X(t,s,x))|\nabla p(s+t, S(t,s,x))+ V(s+t,S(t,s,x))|\delta_{2T}^{-1}.
\end{multline}

Next, we calculate
\[
\frac{d}{dt} p(s+t,X(t,s,x))=\partial_t p(s+t,X(t,s,x))-|\nabla p(s+t,X(t,s,x))|^2= -u(s+t,X(t,s,x))p(s+t,X(t,s,x)),
\]
and
\[\frac{d}{dt}\log(1+p(s+t,X(t,s,x)L_X(t,s,x))=\frac{p(s+t,X(t,s,x))\partial_t L_X(t,s,x)+L_X(t,s,x)\frac{d}{dt} p(s+t,X(t,s,x))}{1+p(s+t,X(t,s,x))L_X(t,s,x)}\]
\[
=\frac{p(s+t,X(t,s,x))\partial_t L_X(t,s,x)}{1+p(s+t,X(t,s,x))L_X(t,s,x)}-\frac{L_X(t,s,x) p(s+t,X(t,s,x))u(s+t,X(t,s,x))}{1+p(s+t,X(t,s,x))L_X(t,s,x)}
\]
Thus,
\begin{multline}\label{eq:stability_4}
    \frac{d}{dt}\log(1+p(t+s,X(t,s,x)L_X(t,s,x))\leq  \\ f(s+t,X(t,s,x))+f(s+t,S(t,s,x))+g(s+t,X(t,s,x))^2+g(s+t,S(t,s,x))^2\\+p(t+s,X(t,s,x))|\nabla p(s+t, S(t,s,x))+ V(s+t,S(t,s,x))|\delta_{2T}^{-1}-\frac{L_X(t,s,x) p(s+t,X(t,s,x))u(s+t,X(t,s,x))}{1+p(s+t,X(t,s,x))L_X(t,s,x)}
\end{multline}

Now we note that $I(0,s)=0$. Using the above bounds on the $t$ derivative of the integrand of $I$, we can conclude that for any $t\geq 0$
\begin{multline}\label{eq:stability_6}
 I_T(t,s)\leq \int_{Q_t}\bar{\rho}(s,x)\Big(    f(s+t,X(t,s,x))+f(s+t,S(t,s,x))+g(s+t,X(t,s,x))^2+g(s+t,S(t,s,x))^2\\+p(t+s,X(t,s,x))|\nabla p(s+t, S(t,s,x))+ V(s+t,S(t,s,x))|\delta_{2T}^{-1}\\-\frac{L_X(t,s,x) p(s+t,X(t,s,x))u(s+t,X(t,s,x))}{1+p(s+t,X(t,s,x))L_X(t,s,x)}\Big)\, dx\, dt,
\end{multline}
Using the pushforward bounds from Lemma \ref{lem:basic_flow_estimate} and changing variables in time, it follows that
\begin{multline}\label{eq:stability_8}
 I_T(t,s)\leq e^{tB}\int_{[s,s+t]\times\RR^d} p_h\delta_{2T}^{-1}\mu(\tau,x)|\nabla p(\tau, x)+ V(\tau,x)|+\big(\rho(\tau,x)+\mu(\tau,x)\big)\big( f(\tau,x)+g(\tau,x)^2\big)\, dx\, d\tau,
 \\-e^{tB}\int_{[s,s+t]} \rho(\tau,x)\frac{u(\tau,x)p(\tau,x)L_X(t,s,Y(t,s+t,x))}{1+p(\tau,x)L_X(t,s,Y(t,s+t,x))},
\end{multline}
where we have also used the fact that $Y(t,s+t,x)$ is the inverse of $X(t,s,x)$.  Since $L_X\leq \log(1+\delta_{2T}^{-1})$ and $a\mapsto \frac{a}{1+a}$ is an increasing function, it follows that  
\begin{multline}\label{eq:stability_9}
 I_T(t,s)\leq e^{tB}\int_{[s,s+t]\times\RR^d} p_h\delta_{2T}^{-1}\mu(\tau,x)|\nabla p(\tau, x)+ V(\tau,x)|+\big(\rho(\tau,x)+\mu(\tau,x)\big)\big( f(\tau,x)+g(\tau,x)^2\big)\, dx\, d\tau,
 \\+e^{tB}\int_{[s,s+t]} \rho(\tau,x)\frac{|u(\tau,x)|p(\tau,x)\log(1+\delta_{2T}^{-1})}{1+p(\tau,x)\log(1+\delta_{2T}^{-1})},
\end{multline}
Using the bounds (\ref{eq:f_bound}) and (\ref{eq:g_bound}) and the definition of $\delta_{T}$, we see that 
\begin{multline}\label{eq:stability_10}
I_T(t,s)\lesssim  e^{tB} \norm{\rho u\log(1+\frac{1}{p})^{\lambda'-1}}_{L^1([s,s+t]\times\RR^d)}\norm{\log(1+\frac{1}{p})^{1-\lambda'}\frac{p\log(1+\delta_{2T}^{-1})}{1+p\log(1+\delta_{2T}^{-1})}}_{L^{\infty}([s,s+t]\times\RR^d)}
\\
+e^{tB}p_h\norm{\mu}_{L^2([s,s+t]\times\RR^d)}^{1/2}+
e^{tB}\norm{\rho+\mu}_{L^2([s,s+t]\times\RR^d)}\Big(\norm{\nabla p}_{L^4([s,s+t]\times\RR^d)}^2+\norm{pD^2p}_{L^2([s,s+t]\times\RR^d)}\Big).
\end{multline}
For $b>0$ large, the function $\log(1+\frac{1}{a})^{1-\lambda'}\frac{ab}{1+ab}$ is roughly maximized at $a=1/b$, thus,
\[
\norm{\log(1+\frac{1}{p})^{1-\lambda'}\frac{p\log(1+\delta_{2T}^{-1})}{1+p\log(1+\delta_{2T}^{-1})}}_{L^{\infty}([s,s+t]\times\RR^d)}\lesssim \log(1+\log(1+\delta_{2T}^{-1}))^{1-\lambda'}.
\]
The result now follows from (\ref{eq:stability_10}) and the above bound.

\end{proof}

Now we are ready to establish the stability property.
\begin{prop}\label{prop:map_stability}
If the initial data satisfies (ID1-ID3) and the growth terms satisfy (G1-G3), then for any $T\geq 0$ and $\lambda'\in (0,1]$ there exists a constant $\mathcal{C}_{\gamma, \lambda'}(T)$ depending only on the initial data, $\lambda', \gamma, T$ and $d$ such that
\begin{equation}\label{eq:x_stability_bound}
\sup_{s\leq T}\sup_{t\leq T}\int_{\RR^d} \bar{\rho}(s,x)|X(t,s,x)-S(t,s,x)|\leq \mathcal{C}_{\gamma,\lambda'}(2T)\log(1+\log(1+\delta_{2T}^{-1}))^{-\lambda'/2},
\end{equation}
\begin{equation}\label{eq:y_stability_bound}
\sup_{s\leq T}\sup_{t\leq s}\int_{\RR^d} \bar{\rho}(s,x)|Y(t,s,x)-Z(t,s,x)|\leq \mathcal{C}_{\gamma,\lambda'}(T)\log(1+\log(1+\delta_T^{-1}))^{-\lambda'/2}.
\end{equation}
Additionally, if the growth terms satisfy (G4) the initial data satisfies (ID5), and $\lambda'\in (0,1)\cap (0,\lambda]$ where $\lambda$ is the constant in (ID5), then $\mathcal{C}_{\gamma,\lambda'}(T)$ is independent of $\gamma$.
\end{prop}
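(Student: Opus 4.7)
The plan is to deduce the claimed bounds by a Chebyshev-type argument in the spirit of Crippa--De Lellis, converting the doubly-logarithmic estimate on $I_T$ (resp.\ $J_T$) from Lemma~\ref{lem:map_stability} into a genuine $L^1(\bar\rho)$ stability estimate on $X-S$ (resp.\ $Y-Z$). The forward and backward cases are entirely parallel, so I sketch only the $X$ case.

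Because the inner logarithm $L_X = \log(1+\min(|X-S|,1)/\delta_{2T})$ in $I_T$ only senses $D := \min(|X-S|,1)$, I would first split
\[
\int\bar\rho(s,x)|X-S|\,dx \;\leq\; \int \bar\rho(s,x)D\,dx + \int_{\{|X-S|\geq 1\}} \bar\rho(s,x)|X-S|\,dx,
\]
and introduce two free parameters $\epsilon, M > 0$ together with the bad set $A_\epsilon := \{x:p(s+t,X(t,s,x)) < \epsilon\}$. Pushing forward by $X$ via Lemma~\ref{lem:basic_flow_estimate} and applying Chebyshev together with Proposition~\ref{prop:u_l1} yield
\[
\bar\rho(A_\epsilon) \;\lesssim\; e^{tB}\log(1+1/\epsilon)^{-\lambda'},
\]
with $\lambda'=2$ in the $\gamma$-dependent setting and $\lambda'\in(0,1)\cap(0,\lambda]$ in the $\gamma$-independent one.

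For the near piece $\int \bar\rho D$, I would decompose the domain of integration into $A_\epsilon$, $A_\epsilon^c\cap\{L_X\geq M\}$, and $A_\epsilon^c\cap\{L_X<M\}$. On the first set use $D\leq 1$; on the second, the lower bound $\log(1+pL_X)\geq \log(1+\epsilon M)$ combined with Chebyshev applied to $I_T$ gives $\bar\rho$-measure $\leq I_T/\log(1+\epsilon M)$; on the third, the elementary identity $D \leq \delta_{2T}(e^{L_X}-1) \leq \delta_{2T}e^M$ gives a direct bound. Together,
\[
\int \bar\rho D \;\lesssim\; \bar\rho(A_\epsilon) + \frac{I_T}{\log(1+\epsilon M)} + \delta_{2T}e^M \|\bar\rho\|_{L^1}.
\]
For the far piece, Cauchy--Schwarz together with \eqref{eq:x_speed_bound} reduces matters to controlling $\bar\rho(\{|X-S|\geq 1\})$; on that set $L_X = \log(1+\delta_{2T}^{-1})$, so the same two-step decomposition (without the $e^M$ term) gives $\bar\rho(\{|X-S|\geq 1\}) \lesssim \bar\rho(A_{\epsilon'}) + I_T/\log(1+\epsilon'\log(1+\delta_{2T}^{-1}))$ for a possibly different $\epsilon'$.

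All that remains is to optimize the free parameters. I would take $M = \tfrac{1}{2}\log(\delta_{2T}^{-1})$ so that $\delta_{2T}e^M = \delta_{2T}^{1/2}$ is negligible, and choose $\epsilon, \epsilon'$ so that $\log(1+1/\epsilon)\sim \log(1+\log(1+\delta_{2T}^{-1}))^{1/2}$ and $\log(1+1/\epsilon')\sim \log(1+\log(1+\delta_{2T}^{-1}))$ (the latter being more aggressive to absorb the square-root loss from Cauchy--Schwarz on the far piece). Combined with the bound $I_T \lesssim \log(1+\log(1+\delta_{2T}^{-1}))^{1-\lambda'}$, every term in both decompositions comes out to order $\log(1+\log(1+\delta_{2T}^{-1}))^{-\lambda'/2}$, up to constants depending only on the initial data, $T$, $\gamma$, $\lambda'$, and $d$. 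The main subtlety is this joint choice: $\epsilon$ must be small enough to force $\bar\rho(A_\epsilon)$ to decay at the target rate, yet large enough that $\epsilon M$ still diverges fast enough to keep $\log(1+\epsilon M)\sim \log M$. The doubly-logarithmic structure of $I_T$ and the $\log(1+1/p)^{\lambda'}$-weighted integrability of $\rho$ are precisely what allow both requirements to be satisfied simultaneously.
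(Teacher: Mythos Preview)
Your strategy is correct and close to the paper's, but the organization is more involved than necessary and the stated choice of $\epsilon'$ is on the wrong side of a borderline. The paper argues only the $Y$ case and proceeds with a \emph{single} threshold: write
\[
\int\bar\rho|Y-Z|\le r\|\rho\|_{L^1}+\Big(\int_{D_r}\bar\rho\Big)^{1/2}\Big(\int\bar\rho|Y-Z|^2\Big)^{1/2},
\qquad D_r=\{|Y-Z|>r\},
\]
take $r=\delta_T^{1/2}$ so the first term is negligible, and then bound $\int_{D_r}\bar\rho$ by splitting $D_r\subset A_{Y,\epsilon}\cup D_{r,\epsilon}$ exactly as you do. With the single choice $\epsilon=\log(1+\delta_T^{-1/2})^{-1/2}$ one gets both $\bar\rho(A_{Y,\epsilon})\lesssim K^{-\lambda'}$ and $J_T/\log(1+\epsilon\log(1+r/\delta_T))\lesssim K^{-\lambda'}$, where $K=\log(1+\log(1+\delta_T^{-1}))$; the square root then yields $K^{-\lambda'/2}$. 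Your separate near/far decomposition at level $1$ and the extra parameter $M$ are not needed.

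The one real issue is your $\epsilon'$. Writing $\log(1+1/\epsilon')=aK$ gives $\epsilon'\log(1+\delta_{2T}^{-1})\sim e^{(1-a)K}$, hence $\log(1+\epsilon'\log(1+\delta_{2T}^{-1}))\sim(1-a)K$. Both terms in your far-piece measure are then $\lesssim K^{-\lambda'}$ only when $a\in(0,1)$ strictly; at $a=1$ (which is what ``$\log(1+1/\epsilon')\sim K$'' most naturally suggests) the $I_T$ term degenerates to $K^{1-\lambda'}$, and after the square root the far piece does not decay at all. Taking, e.g., $\epsilon'=\log(1+\delta_{2T}^{-1})^{-1/2}$ (so $a=1/2$) repairs this and matches the paper's choice.
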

\begin{proof}
We provide the proof for (\ref{eq:y_stability_bound}), the argument for (\ref{eq:x_stability_bound}) is essentially identical.

Given $r>0$ let $D_{r}(t,s):=\{x\in \RR^d: |Y(t,s,x)-Z(t,s,x)|>r\}$. 
We can then estimate
\[
\int_{\RR^d} \bar{\rho}(s,x)|Y(t,s,x)-Z(t,s,x)|\leq r \norm{\rho}_{L^{\infty}([0,s];L^1(\RR^d))}+\int_{D_{r}(t,s)}\bar{\rho}(s,x)|Y(t,s,x)-Z(t,s,x)|\, dx
\]
\[
\leq r \norm{\rho}_{L^{\infty}([0,s];L^1(\RR^d))}+\Big(\int_{D_{r}(t,s)}\bar{\rho}(s,x)\, dx\Big)^{1/2}\Big(\int_{\RR^d}\bar{\rho}(s,x)|Y(t,s,x)-Z(t,s,x)|^2\, dx\Big)^{1/2}
\]
From Lemma \ref{lem:basic_flow_estimate}, we 
already have a bound for $\Big(\int_{\RR^d}\bar{\rho}(s,x)|Y(t,s,x)-Z(t,s,x)|^2\, dx\Big)^{1/2}$.  Thus we focus on the other integral.

Fix some $\epsilon>0$ and note that $D_{r}$ is contained in the union  $ D_{r,\epsilon}(t,s)\cup  A_{Y,\epsilon}(t,s)$ where
\[
D_{r,\epsilon}(t,s):=\{x\in D_{r}(t,s):  p(s-t,Y(t,s,x))>\epsilon\}
\]
and
\[
A_{Y,\epsilon}(t,s):=\{x\in \RR^d:  p(s-t,Y(t,s,x))<\epsilon\}
\]
  Using these sets, we see that
\[
\int_{D_{r}(t,s)}\bar{\rho}(s,x)\, dx\leq  \log(1+\epsilon^{-1})^{-\lambda'}\int_{A_{Y,\epsilon}(t,s)}\bar{\rho}(s,x)\log(1+\frac{1}{p(s-t,Y(t,s,x))})^{\lambda'}\, dx
+\int_{D_{r,\epsilon}(t,s)}\bar{\rho}(s,x)\, dx.
\]
Pushing forward by $Y$ in the first integral, we get 
\[
\int_{D_{r}(t,s)}\bar{\rho}(s,x)\, dx\leq  e^{tB}\log(1+\epsilon^{-1})^{-\lambda'}\int_{\RR^d}\rho(s-t,x)\log(1+\frac{1}{p(s-t,x)})^{\lambda'}\, dx
+\int_{D_{r,\epsilon}(t,s)}\bar{\rho}(s,x)\, dx.
\]

To estimate the final integral, we write 
\[
\int_{D_{r,\epsilon}(t,s)}\bar{\rho}(s,x)\, dx\leq \log(1+\epsilon\log(1+\frac{\min(r,1)}{\delta_T}))^{-1} \int_{D_{r,\epsilon}(t,s)} \bar{\rho}(s,x)\log\Big(1+\epsilon\log\big(1+\frac{\min(r,1)}{\delta_T}\big)\Big)
\]
\[
\leq \log(1+\epsilon\log(1+\frac{\min(r,1)}{\delta_T}))^{-1} \int_{D_{r,\epsilon}(t,s)} \bar{\rho}(s,x)\log\Big(1+p(s-t,Y(t,s,x))\log\big(1+\frac{\min(|Y(t,s,x)-Z(t,s,x)|,1)}{\delta_T}\big)\Big),
\]
where we have taken advantage of the definition of $D_{r,\epsilon}(t,s)$ to obtain the last inequality.  Recognizing that the final integral is bounded above by $J_T$, it follows that 
\[
\int_{D_{\lambda,\epsilon}(t,s)}\bar{\rho}(s,x)\, dx\leq \log(1+\epsilon\log(1+\frac{\min(r,1)}{\delta_T}))^{-1} J_T(t,s).
\]
Thus, after combining our work, we see that
\[
\int_{D_{r}(t,s)}\bar{\rho}(s,x)\, dx\leq
\]
\[
 e^{tB}\log(1+\epsilon^{-1})^{-\lambda'}\int_{\RR^d}\rho(s-t,x)\log(1+\frac{1}{p(s-t,x)})^{\lambda'}\, dx+\log(1+\epsilon\log(1+\frac{\min(r,1)}{\delta_T}))^{-1}J_T(t,s).
\]
Using Proposition \ref{prop:u_l1} and Lemma \ref{lem:map_stability}, it follows that 
\[
\int_{D_{r}(t,s)}\bar{\rho}(s,x)\, dx\lesssim \log(1+\epsilon^{-1})^{-\lambda'}+\log(1+\epsilon\log(1+\frac{\min(r,1)}{\delta_T}))^{-1}\log(1+\log(1+\delta_T^{-1}))^{1-\lambda'}
\]

Now we make the choices $r=\delta_T^{1/2}$ and $\epsilon=\log(1+\delta_T^{-1/2})^{-1/2}$.  
Up to constants, the previous line becomes
\[
 \log(1+\log(1+\delta_T^{-1}))^{-\lambda'}
\]
Combining our work, the result follows.
\end{proof}

\section{Compactness}\label{sec:compactness}

In this final section, we will at last construct complete Lagrangian solutions to the system (\ref{eq:multi_pme}-\ref{eq:nutrients}) under our various assumptions on the initial data and structure of the growth terms (c.f. Section \ref{ssec:main_results}). To construct these solutions, we will take a sequence of smooth solutions to (\ref{eq:multi_pme}-\ref{eq:nutrients}) and use our results from Sections \ref{sec:estimates} and \ref{sec:flows} to prove that strong limit points exist and satisfy Definition \ref{def:complete_lagrangian_solution}. We will first construct solutions in the case $\gamma\in [1,\infty)$ and then consider the incompressible limit $\gamma\to \infty$.

\subsection{Compactness for $\gamma$ fixed}\label{ssec:compactness_gamma}

We begin with the following Proposition which guarantees the existence of smooth solutions
under certain assumptions on the initial data and growth terms.  Here the crucial property will be that the initial data is not compactly supported.  Let us emphasize that the existence of smooth solutions for PME equations with data bounded away from zero is very well-known in the literature \cite{vazquez_book}.  We take an approach similar to \cite{gpsg}.

\begin{prop}\label{prop:smooth_solutions}
Let $(\rho_1^0,\ldots, \rho_{\ell}^0, n^0)$ be initial data satisfying (ID1-ID3) and let $G_1,\ldots, G_{\ell}$ be growth terms satisfying  (G1-G3). If $(\rho_1^0,\ldots, \rho_{\ell}^0, n^0)$ and $G_1,\ldots, G_{\ell}$ are smooth and there exists some $\delta>0$ such that $\sum_{i=1}^{\ell} \rho_i^0\geq \delta e^{- |x|^2}$
then there exists a smooth complete Lagrangian solution to (\ref{eq:multi_pme}-\ref{eq:nutrients}) with initial data $(\rho_{1}^0,\ldots, \rho_{\ell}^0, n^0)$ and growth terms $G_{i}$.
\end{prop}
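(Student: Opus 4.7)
The plan is to construct the smooth solution via a fixed-point iteration that decouples the quasilinear pressure equation from the transport system for the mass fractions $c_i := \rho_i/\rho$ and the heat equation for $n$. Working in a Banach space of space-time H\"older functions with suitable decay at infinity, I fix an ansatz $(\tilde{c}_i, \tilde{n})$, define $\tilde{G} := \sum_i \tilde{c}_i G_i(\tilde{p}, \tilde{n})$, and successively solve (a) the quasilinear pressure equation (\ref{eq:pme_pressure}) with source $\tilde{G}$ for $p$ (equivalently (\ref{eq:pme}) for $\rho$), (b) the heat equation (\ref{eq:nutrients}) for $n$ with the now-smooth coefficient $\rho$, and (c) the linear transport system (\ref{eq:c_i}) for the $c_i$'s along the smooth velocity $-\nabla p$ via the method of characteristics. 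A standard contraction estimate on a short time interval $[0, T_*]$ then produces a local classical solution, which is extended globally by the a priori bounds of Section \ref{sec:estimates} (in particular the $L^\infty$ bound $p \leq p_h$ and the energy estimates) via a continuation argument.

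The core difficulty, and the heart of the proof, is maintaining nondegeneracy of the PME. Since $\rho^0 \geq \delta e^{-|x|^2} > 0$, the equation $\partial_t \rho - \nabla \cdot (\rho \nabla \rho^\gamma) = \rho \tilde{G}$ is strictly (though not uniformly) parabolic on $\mathbb{R}^d$. To keep $\rho$ bounded away from zero for all later times, I would construct an explicit Gaussian subsolution $\underline{\rho}(t,x) = \delta(t) e^{-\kappa(t)|x|^2}$, choosing $\kappa(t) = \kappa_0 + Ct$ and $\delta(t)$ by a simple ODE so that direct substitution gives $\partial_t \underline{\rho} - \nabla \cdot (\underline{\rho} \nabla \underline{\rho}^\gamma) \leq -B \underline{\rho}$, where $B$ is the uniform bound on the $|G_i|$. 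The classical comparison principle for quasilinear parabolic equations then forces $\rho(t,x) \geq \underline{\rho}(t,x) > 0$ on $[0,T] \times \mathbb{R}^d$. Combined with the upper bound $p \leq p_h$, this makes (\ref{eq:pme_pressure}) uniformly parabolic on compact sets, so the Ladyzhenskaya-Solonnikov-Ural'tseva theory plus Schauder bootstrapping yields $C^\infty$ regularity locally, and a diagonal exhaustion in $|x|$ recovers smoothness on all of $\mathbb{R}^d$.

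Once a smooth classical solution $(\rho, p, n, c_i)$ is in hand, the verification of Definition \ref{def:complete_lagrangian_solution} is routine. Classical Cauchy-Lipschitz theory applied to the smooth vector field $-\nabla p$ produces unique forward and backward flow maps $X, Y$, which automatically satisfy the semigroup and inversion identities in (\ref{eq:semigroup})-(\ref{eq:map_inversion}) pointwise. The pushforward identity $X(t,s,\cdot)_\# \rho(s,\cdot) = \rho(s+t,\cdot)$ follows from the continuity equation for $\rho$ together with the smooth change of variables, and combined with the ODE $\frac{d}{d\tau}[c_i \circ X](\tau, s, x) = (G_i - G)(c_i \circ X)(\tau, s, x)$ coming from (\ref{eq:c_i}) immediately yields the representation formula (\ref{eq:rho_representation}) for $\rho_i = c_i \rho$; the pushforward bounds (\ref{eq:x_push})-(\ref{eq:y_push}) with $B = \sup_i \|G_i\|_\infty$ are an immediate corollary. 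The only genuine obstacle throughout is the global-in-time preservation of the lower bound on $\rho$, which the Gaussian subsolution cleanly handles; every other step reduces to standard classical PDE/ODE machinery once strict parabolicity has been secured.
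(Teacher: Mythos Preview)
Your proposal is correct and follows essentially the same route as the paper: an iteration that decouples the quasilinear pressure equation from the linear transport equations for the $c_i$ and the heat equation for $n$, combined with a Gaussian-type subsolution to keep the density strictly positive and hence the pressure equation uniformly parabolic on compacta. The paper's implementation differs only in minor details---it adds a vanishing regularization $+\tfrac{1}{m}$ to the diffusion coefficient in the pressure iteration (\ref{eq:iteration_2}) and writes the subsolution at the pressure level, $p_m \geq \delta^\gamma \exp(-\gamma|x|^2 - \theta t)$, rather than at the density level---but the core mechanism is identical to yours.
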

\begin{proof}
We can construct solutions through the following iteration scheme. To initialize the scheme we first set $\rho_{i,0}(t,x)=\rho_i^0(x)$ and $n_0(t,x)=n^0(x)$ for all $(t,x)$, then we set $\rho_0=(\sum_{i=1}^{\ell} \rho_{i,0})$,  $p_0=\rho_0^{\gamma}$,  We then iterate by solving the following equations
\begin{equation}\label{eq:iteration_1}
c_{i,m}=\frac{\rho_{i,m}}{\rho_m}, \quad G^m=\sum_{i=1}^{\ell} c_{i,m}G_i(p_m,n_m),
\end{equation}
\begin{equation}\label{eq:iteration_2}
\partial_t p_{m+1}-\gamma (p_{m+1}+\frac{1}{m})\Delta p_{m+1}-|\nabla p_{m+1}|^2=\gamma p_{m+1}G^m,
\end{equation}
\begin{equation}\label{eq:iteration_3}
\partial_t \rho_{i,m+1}-\nabla \cdot(\rho_{i,m+1}\nabla p_{m+1})=\rho_{i,m+1}G_i(p_{m+1},n_m)
\end{equation}
\begin{equation}\label{eq:iteration_4}
\partial_t n_{m+1}-\alpha \Delta n_{m+1}=-n_{m+1}\sum_{i=1}^{\ell}\beta_i\rho_{i,m+1}
\end{equation}
By construction, each step of the scheme produces a smooth solution (this is clear for (\ref{eq:iteration_2}) and (\ref{eq:iteration_4}) and we then note that (\ref{eq:iteration_3}) is a continuity equation with smooth initial data,  smooth vector field, and smooth source). We can also check that $\delta^{\gamma}\exp(-\gamma|x|^2-\theta t)$ is a subsolution to (\ref{eq:iteration_2}) once $\theta$ is chosen to be sufficiently large. Thus, $p_{m}(t,x)\geq  \delta^{\gamma}\exp(-\gamma|x|^2-\theta t)$ for all $m$.  This guarantees that on balls of finite radius equation (\ref{eq:iteration_2}) is uniformly parabolic independently of $m$.  From here, the convergence of the scheme to a smooth solution is well-known in the parabolic literature folklore.
\end{proof}

Given initial data $(\rho_{1}^0, \ldots, \rho_{\ell}^0, n^0)$ satisfying (ID1-ID3) and growth terms $G_i$ satisfying (G1-G3), we want to use the previous Proposition to  construct a sequence of smooth solutions that will converge to a complete Lagrangian solution with the desired initial data. Let $\eta:\RR^d\to\RR$ be a smooth compactly supported mollifier. For each $k\in \ZZ_+$ we define
\begin{equation}
    \rho_{i,k}^0:=\frac{1}{k}e^{-|x|^2}+\eta_{\frac{1}{k}}*\rho_i^0,
\end{equation}
\begin{equation}
    n_k^0=\eta_{\frac{1}{k}}*n^0,
\end{equation}
and we choose $G_{i,k}$ to be a sequence of smooth approximations to $G_i$.  By Proposition \ref{prop:smooth_solutions}, for each $k\in \ZZ_+$, there exists a smooth solution $(\rho_{1,k}, \ldots, \rho_{\ell,k}, p_k, n_k)$ to (\ref{eq:multi_pme}-\ref{eq:nutrients}) with initial data $(\rho_{1,k}^0, \ldots, \rho_{\ell,k}^0, n_k^0)$.   Note that the smoothness of the variables implies that $(\rho_{1,k}, \ldots, \rho_{\ell,k}, p_k, n_k)$ is a complete Lagrangian solution to the system. Hence, we are assured the existence of the forward and backward flow maps $X_k, Y_k$ along $-\nabla p_k$ satisfying all of the properties in Definition \ref{def:complete_lagrangian_solution}.  It remains to verify that these sequences have sufficient compactness to extract limit points and prove that the limit points are the desired complete Lagrangian solutions.

\begin{lemma}\label{lem:precompact}
Fix some $T\geq 0$.
Both $p_k$ and $n_k$ are $L^2([0,T];H^1(\RR^d))\cap C([0,T];L^2(\RR^d))$ strongly precompact.  For each $i\in \{1,\ldots, \ell\}$ the family $\rho_{i,k}$ is $L^1(Q_T)\cap W^{1,2}([0,T];H^{-1}(\RR^d))$ weakly precompact. 
\end{lemma}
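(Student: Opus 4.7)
The plan is to combine the uniform estimates from Section~\ref{sec:estimates} with Aubin--Lions for strong $L^2$-compactness, an energy identity to upgrade weak-$L^2$ gradient convergence to strong, and Dunford--Pettis for the weak $L^1$ statement. All estimates from Section~\ref{sec:estimates} apply uniformly in $k$ because the mollification used to construct the data preserves the norms controlled by (ID1--ID3) up to a fixed constant, and the lower bound $\sum_i\rho_{i,k}^0 \geq k^{-1}e^{-|x|^2}$ is used only to obtain smoothness and is not tracked in the estimates.

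For $n_k$: Lemma~\ref{lem:nutrient} gives uniform bounds on $\nabla n_k$ in $L^\infty([0,T]; L^2(\RR^d))$ and $\partial_t n_k$ in $L^2(Q_T)$; the maximum principle gives $n_k \in L^\infty(Q_T)$ uniformly. Aubin--Lions using $H^1(B_R) \hookrightarrow L^2(B_R) \hookrightarrow H^{-1}(B_R)$ on each ball, combined with the Gaussian tightness propagated from $n_k^0$, yields strong precompactness in $C([0,T]; L^2(\RR^d))$. To upgrade to $L^2([0,T]; H^1)$, test (\ref{eq:nutrients}) against $n_k$ to obtain
\[
\tfrac12\|n_k(T)\|_{L^2}^2 + \alpha\int_{Q_T}|\nabla n_k|^2 + \int_{Q_T} n_k^2\sum_{i=1}^\ell\beta_i\rho_{i,k} = \tfrac12\|n_k^0\|_{L^2}^2.
\]
The last term on the left passes to the limit by strong-times-weak (strong $C_tL^2$ convergence of $n_k$ against weak $L^1$ convergence of $\rho_{i,k}$, established below); hence $\int|\nabla n_k|^2\to\int|\nabla n|^2$, which together with weak $L^2$ convergence of $\nabla n_k$ forces strong $L^2$ convergence.

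For $p_k$: Proposition~\ref{prop:phessian} gives $\nabla p_k$ uniformly in $L^\infty([0,T]; L^2(\RR^d)) \cap L^4(Q_T)$ (the latter by applying (\ref{eq:grad_p_power_estimate}) with $\eta(p) = p$, $m = 4$, since $\eta(p)^2/(p\eta'(p)) = p \leq p_h$) and $\sqrt{p_k}\,D^2p_k \in L^2(Q_T)$. Rewriting (\ref{eq:pme_pressure}) as
\[
\partial_t p_k = (1-\gamma)|\nabla p_k|^2 + \tfrac{\gamma}{2}\Delta(p_k^2) + \gamma p_kG_k,
\]
and noting $p_k \in L^q(Q_T)$ for all $q \in [1,\infty]$ (from $p_k \leq p_h$ and $\rho_k \in L^1$), each summand lies uniformly in $L^2([0,T]; H^{-1}(\RR^d))$, so $\partial_t p_k$ does too. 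Aubin--Lions plus tightness from the second-moment estimate in Lemma~\ref{lem:p_grad_bound} yields strong precompactness of $p_k$ in $L^2(Q_T) \cap C([0,T]; L^2(\RR^d))$. For the $H^1$ upgrade when $\gamma > 1$, integrating (\ref{eq:pme_pressure}) over $Q_T$ and integrating by parts the $p_k\Delta p_k$ term yields the identity
\[
(\gamma-1)\int_{Q_T}|\nabla p_k|^2 = \int_{\RR^d}\big(p_k^0 - p_k(T,\cdot)\big) + \gamma\int_{Q_T} p_kG_k,
\]
whose right-hand side converges as $k\to\infty$ by strong $L^q_{t,x}$ convergence of $p_k$ (for every $q$), continuity of each $G_i$ in $(p,n)$, and strong $C_tL^2$ convergence of $n_k$. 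Hence $\int|\nabla p_k|^2 \to \int|\nabla p|^2$, which with the weak $L^2$ convergence of $\nabla p_k$ is strong $L^2$ convergence. For the endpoint $\gamma = 1$, where $p = \rho$, test the quasilinear heat equation $\partial_t\rho_k - \tfrac12\Delta(\rho_k^2) = \rho_kG_k$ against $\rho_k$ to obtain an analogous energy identity that yields the same conclusion by standard parabolic energy methods.

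For $\rho_{i,k}$: each is uniformly bounded by $p_h^{1/\gamma}$ in $L^\infty(Q_T)$ and has uniformly tight second moments (both inherited from $\rho_k$), so Dunford--Pettis gives weak precompactness in $L^1(Q_T)$. From (\ref{eq:multi_pme}) and the pointwise bound $|\rho_{i,k}\nabla p_k|^2 \leq p_h^{1/\gamma}\rho_k|\nabla p_k|^2 \in L^1(Q_T)$, we have $\partial_t\rho_{i,k}$ uniformly bounded in $L^2([0,T]; H^{-1}(\RR^d))$, hence weak precompactness in $W^{1,2}([0,T]; H^{-1})$. The main obstacle is the energy identity for $p_k$: the nonlinear coupling through $G_k = \sum_i c_{i,k}G_i(p_k,n_k)$ must be handled by strong-times-weak, so one must arrange matters so that the strong convergence of $p_k$ and $n_k$ is secured \emph{before} the identity is closed, to deal with the merely weakly convergent ratios $c_{i,k}$.
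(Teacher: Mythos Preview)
Your approach differs from the paper's and has a gap in the $L^2_tH^1$ step for $p_k$. The paper does not use an energy identity: it invokes Proposition~\ref{prop:u_l1} (the weighted $L^1$ bound on $|u_k|$) to control $\rho_k|\partial_t p_k|+\rho_k|\Delta p_k|$ in $L^1(Q_T)$, then passes to the auxiliary variable $q_k:=p_k^{1+1/\gamma}$, for which $|\partial_t q_k|\sim\rho_k|\partial_t p_k|$, $|\Delta q_k|\lesssim|\nabla\rho_k\cdot\nabla p_k|+\rho_k|\Delta p_k|$, and $|\nabla q_k|\sim\rho_k|\nabla p_k|$ enjoy genuine $L^1$/$L^2$ bounds with spatial decay. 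This yields $L^2_tH^1$ precompactness for $q_k$, which is then transferred to $p_k$ by a splitting argument exploiting $\int p_k^{-\alpha}|\nabla p_k|^2<\infty$ (needed because $q\mapsto q^{\gamma/(\gamma+1)}$ is not $C^1$ at $0$). The $C_tL^2$ part comes directly from $\partial_t p_k\in L^2(Q_T)$, via $|\partial_t p_k|^2\le 2|\nabla p_k|^4+2p_k^2u_k^2$ and Proposition~\ref{prop:phessian}.

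The problem with your energy identity is the step ``hence $\int|\nabla p_k|^2\to\int|\nabla p|^2$.'' Your identity shows the right-hand side converges, so $\int_{Q_T}|\nabla p_k|^2\to L$ for some $L$; but identifying $L=\int_{Q_T}|\nabla p|^2$ requires the \emph{limit} $p$ to satisfy the same integrated identity. Passing to the limit in the weak pressure equation only gives $(\gamma-1)\mu=-\partial_t p+\tfrac{\gamma}{2}\Delta(p^2)+\gamma pG$ with $\mu$ the weak-$L^2$ limit of $|\nabla p_k|^2$, and a priori you only know $|\nabla p|^2\le\mu$. Deriving the identity for $p$ independently from the weak $\rho$-equation (by testing against $\gamma\rho^{\gamma-1}$ or against $p$) runs into a chain-rule obstruction: $\nabla(\rho^{\gamma-1})\sim\rho^{-1}\nabla p$ need not lie in $L^2$, and justifying $\langle\partial_t\rho, p\rangle=\tfrac{1}{\gamma+1}\tfrac{d}{dt}\int\rho^{\gamma+1}$ for the limit would require exactly the strong $L^2_tH^1$ convergence of $p_k$ you are trying to establish. (Your $n_k$ identity does close, because the limiting heat equation is linear and testing it against $n$ is unproblematic.) The paper's route through Proposition~\ref{prop:u_l1} and the $q_k$ variable sidesteps this circularity entirely.
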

\begin{proof}

Thanks to Proposition \ref{prop:u_l1},
we have  \[
\sup_k \int_{Q_T} \rho_k|\partial_t p_k|+\rho_k|\Delta p_k|=\sup_k \int_{Q_T} \rho_k|\nabla p|^2+\rho_k p_k|u_k|+\gi \rho_k| u_k|+\rho_k|G_k|<\infty
\]
We can also compute
\[
\sup_k\int_{Q_T} |x|^{1/2}|\nabla p_k(t,x)|^2\leq \sup_k\int_{Q_T} \rho_k|x|^{2}+\frac{|\nabla p_k(t,x)|^{8/3}}{\rho_k^{1/3}}<\infty.
\]
If we define $q_k=p_k^{1+\gi}$, then $|\partial_t q_k|=\rho_k|\partial_t p_k|$, $(1+|x|^{1/2})|\nabla q_k|^2=(1+|x|^{1/2})\rho_k^2|\nabla p_k|^2$ and $|\Delta q_k|\leq |\nabla \rho_k\cdot \nabla p_k|+\rho_k|\Delta p_k|$. Hence, $q_k$ is $L^2([0,T];H^1(\RR^d))$ precompact.

Now we want to transfer these precompactness properties to $p_k$.  We need to be a little careful since the transformation $a\mapsto a^{(1+\gi)^{-1}}$ is not $C^1$.  Let $k_j$ be a subsequence such that $q_{k_j}$ is  $L^2([0,T];H^1(\RR^d))$ Cauchy. Fix some $\epsilon>0$ and let $\chi_{\epsilon}:\RR\to\RR$ be the characteristic function of $[0,\epsilon]$. We can then compute
\[
\norm{\nabla p_{k_j}-\nabla p_{k_m}}_{L^2(Q_T)}\leq \norm{\nabla p_{k_j}\chi_{\epsilon}(p_{k_j})}_{L^2(Q_T)}+\norm{\nabla p_{k_m}\chi_{\epsilon}(p_{k_m})}_{L^2(Q_T)}
\]
\[
+\norm{\nabla p_{k_j}(1-\chi_{\epsilon}(p_{k_j}))-\nabla p_{k_m}(1-\chi_{\epsilon}(p_{k_m}))}_{L^2(Q_T)}
\]
Hence, 
\[
\lim_{j, m\to\infty} \norm{\nabla p_{k_j}-\nabla p_{k_m}}_{L^2(Q_T)}\leq 2\sup_{j}\norm{\nabla p_{k_j}\chi_{\epsilon}(p_{k_j})}_{L^2(Q_T)}
\]
\[
\leq 2\epsilon^{1/4}\sup_{j}\norm{p_{k_j}^{-1/4}\nabla p_{k_j}}_{L^2(Q_T)}\lesssim \epsilon^{1/4}.
\]
Hence, the $p_k$ are  $L^2([0,T];H^1(\RR^d))$ precompact.  

To get precompactness in $C([0,T];L^2(\RR^d))$ we note that $|\partial_t p_k|^2\leq 2|\nabla p_k|^4+2p^2_ku_k^2$, hence $\partial_t p_k$ is uniformly bounded in $L^2(Q_T)$ thanks to Proposition \ref{prop:phessian}. Now the precompactness in $C([0,T];L^2(\RR^d))$ follows from the precompactness in $L^2([0,T];H^1(\RR^d))$.

The precompactness of the nutrients is clear from the uniform $L^2$ bounds on $\partial_t n_k$ and $\Delta n_k$.
 The weak precompactness of the $\rho_{i,k}$ in $L^1(Q_T)\cap W^{1,2}([0,T];H^{-1}(\RR^d))$ follows from the bound $0\leq \rho_{i,k}\leq \rho_k$ and the equation (\ref{eq:multi_pme}).
\end{proof}

Now that we have established the precompactness of the family of smooth solutions, we can deduce the existence of a limit point $(\rho_1,\ldots, \rho_{\ell}, p,n)$.  In what follows, we shall assume that we have extracted a subsequence (that we do not relabel) such that $(\rho_{1,k}, \ldots, \rho_{\ell,k}, p_k, n_k)$ converges to $(\rho_1, \ldots, \rho_{\ell}, p,n)$ with the various notions of convergence identified in Lemma \ref{lem:precompact}.    We will then show that this limit point is a complete Lagrangian solution and prove Theorem \ref{thm:main1}.

First we show that the maps $X_k, Y_k$ are Cauchy on the support of $\rho$.

\begin{lemma}\label{lem:flow_cauchy}
For any $s, T, M\geq 0$,
\[
\sup_{t\leq T} \lim_{j,k\to\infty}  \int_{\RR^d} \rho(s,x)\min(|X_{j}(t,s,x)-X_{k}(t,s,x)|,M)\, dx=0.
\]
\[
\sup_{t\leq s}\lim_{j,k\to\infty}  \int_{\RR^d} \rho(s,x)\min(|Y_{j}(t,s,x)-Y_{k}(t,s,x)|,M)\, dx=0.
\]
\end{lemma}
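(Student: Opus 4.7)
The plan is to apply Proposition \ref{prop:map_stability} pairwise to the smooth solutions indexed by $j$ and $k$. Since each smooth solution is a complete Lagrangian solution in its own right, for fixed $j,k$ we designate $(\rho_j,p_j,X_j,Y_j)$ as the reference Lagrangian flow in the statement of the proposition, and we view $(\rho_k,-\nabla p_k, X_k, Y_k)$ as the comparison triple $(\mu,V,S,Z)$. The hypotheses (\ref{eq:s_map}-\ref{eq:mu_pushforward_bound_z}) are automatic in this smooth setting, and the constant $B$ can be chosen uniformly in $k$ because the $G_i$ are uniformly bounded by (G1).

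Applying (\ref{eq:x_stability_bound}) with any fixed $\lambda'\in(0,1]$ yields
\[
\sup_{s,t\leq T}\int_{\RR^d}\min(\rho_j,\rho_k)(s,x)\,|X_j(t,s,x)-X_k(t,s,x)|\,dx \leq \mathcal{C}_{\gamma,\lambda'}(2T)\,\log\bigl(1+\log(1+\delta_{j,k}^{-1})\bigr)^{-\lambda'/2},
\]
where $\delta_{j,k}^{2}=\int_{Q_{2T}} \rho_k|\nabla p_j-\nabla p_k|^{2}$. A preliminary step is to verify that the constant $\mathcal{C}_{\gamma,\lambda'}(2T)$ is bounded uniformly in $j,k$; this follows from the construction of the approximations, which mollifies the initial data so that (ID1-ID3) hold with norms controlled independently of $k$, and hence the estimates in Propositions \ref{prop:phessian}, \ref{prop:ab}, and \ref{prop:u_l1} are all uniform.

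Next I would show $\delta_{j,k}\to 0$ as $j,k\to\infty$. By Lemma \ref{lem:precompact} the sequence $\nabla p_k$ is $L^2(Q_{2T})$ convergent to $\nabla p$, so $\nabla p_j-\nabla p_k\to 0$ in $L^2(Q_{2T})$; combined with the uniform bound $\rho_k\leq p_h^{1/\gamma}$, this gives $\delta_{j,k}\to 0$ and therefore
\[
\log\bigl(1+\log(1+\delta_{j,k}^{-1})\bigr)^{-\lambda'/2}\longrightarrow 0.
\]

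Finally, to pass from the weight $\min(\rho_j,\rho_k)$ back to $\rho$ in the claim, I would write
\[
\int_{\RR^d}\rho(s,x)\min(|X_j-X_k|,M)\,dx \leq \int_{\RR^d}\min(\rho_j,\rho_k)(s,x)\,|X_j-X_k|\,dx + M\!\int_{\RR^d}\!\bigl|\rho(s,\cdot)-\min(\rho_j,\rho_k)(s,\cdot)\bigr|.
\]
The first term tends to zero by the stability bound above. For the second, since $p_k\to p$ in $C([0,T];L^2(\RR^d))$ and $\rho_k=p_k^{1/\gamma}$ with the $p_k$ uniformly bounded, we obtain $\rho_k(s,\cdot)\to\rho(s,\cdot)$ strongly in $L^2_{\loc}$; the uniform second-moment bound from Section \ref{sec:estimates} provides tightness, upgrading this to $L^1(\RR^d)$ convergence pointwise in $s$, which yields the desired vanishing. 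The argument for $Y_j,Y_k$ is identical, using (\ref{eq:y_stability_bound}) in place of (\ref{eq:x_stability_bound}). The main delicate point is the uniformity of $\mathcal{C}_{\gamma,\lambda'}(2T)$ along the approximating sequence, which is what forces the mollification scheme in Section \ref{ssec:compactness_gamma} to be chosen in a way that preserves all the structural bounds required by Sections \ref{sec:estimates} and \ref{sec:flows}.
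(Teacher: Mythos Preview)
Your proposal is correct and follows essentially the same approach as the paper: apply Proposition \ref{prop:map_stability} with one smooth solution playing the role of $(\rho,p)$ and the other playing the role of $(\mu,V)$, then use the $L^2(Q_T)$-convergence of $\nabla p_k$ from Lemma \ref{lem:precompact} to drive $\delta_{j,k}\to 0$, and finally swap $\min(\rho_j,\rho_k)$ for $\rho$ via the strong convergence $\rho_k\to\rho$ in $C([0,T];L^r(\RR^d))$. The paper's own proof is a terse three sentences covering exactly these steps; you have simply (and correctly) filled in the details, including the point about uniformity of $\mathcal{C}_{\gamma,\lambda'}(2T)$ in $j,k$, which the paper leaves implicit.
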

\begin{proof}
The strong convergence of $p_k$ to $p$ in $C([0,T];L^1(\RR^d))$ implies that $\rho_k$ converges strongly to $\rho$ in $C([0,T];L^r(\RR^d))$.  
Hence, we can replace $\rho$ in the above integrals with $\min(\rho_k(s,x), \rho_j(s,x))$.  
If we allow $(-\nabla p_{k}, \rho_{k})$ and $(-\nabla p_{j}, \rho_{j})$ to respectively play the roles of $(-\nabla p, \rho)$ and $(V, \mu)$ in  Proposition \ref{prop:map_stability}, the result follows from the vanishing of $\norm{\nabla p_{k}-\nabla p_{j}}_{L^2(Q_T))}$ as $j,k\to\infty$. 
\end{proof}
The strong convergence of the flow maps in Lemma \ref{lem:flow_cauchy} implies the existence of the forward and backward Lagrangian flow maps $X, Y$ along $-\nabla p$. The strong convergence guarantees that these maps satisfy all of the properties in requirement (\ref{def:complete_lagrangian_solution_2}) of the definition of complete Lagrangian solutions (i.e. the flow equations, semigroup property, and inversion formulas). Hence, we have almost succeeded in constructing our desired solution.
Before we prove Theorem \ref{thm:main1}, we establish two uniqueness properties for the flow along $-\nabla p$.  First we show that the flow maps $X, Y$ have a stability property a l\'a  Proposition \ref{prop:map_stability} and then we show  that solutions to the continuity equation along $-\nabla p$ are unique provided that the density stays within the support of $\rho$.

\begin{prop}\label{prop:flow_uniqueness}
 Let $X$ and $Y$ be the $L^1_{\loc}([0,\infty);L^1(\rho))$ limits of $X_{k}$ and $Y_{k}$. Let $V, \mu, S, Z$ be as in Section \ref{sec:flows}. For any $T\geq 0$, we have the estimates
 \begin{equation}\label{eq:x_final_stability_bound}
\sup_{s\leq T}\sup_{t\leq T}\int_{\RR^d} \min(\mu(s,x), \rho(s,x))|X(t,s,x)-S(t,s,x)|\leq \mathcal{C}_{\gamma}(2T)\log(1+\log(1+\delta_T^{-1}))^{-1/2},
\end{equation}
\begin{equation}\label{eq:y_final_stability_bound}
\sup_{s\leq T}\sup_{t\leq s}\int_{\RR^d} \min(\mu(s,x),\rho(s,x))|Y(t,s,x)-Z(t,s,x)|\leq \mathcal{C}_{\gamma}(T)\log(1+\log(1+\delta_T^{-1}))^{-1/2},
\end{equation}
where $\delta_T=\norm{\nabla p+V}_{L^2(Q_T)}$ and $\mathcal{C}_{\gamma}(T)$ is the same constant as in Proposition \ref{prop:map_stability}.
 \end{prop}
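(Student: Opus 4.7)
The strategy is to invoke Proposition \ref{prop:map_stability} for each of the smooth approximations $(\rho_{1,k},\ldots,\rho_{\ell,k}, p_k, n_k)$ constructed in Section \ref{ssec:compactness_gamma} and then pass to the limit $k \to \infty$. Since each approximation is a smooth complete Lagrangian solution, Proposition \ref{prop:map_stability} with $\lambda'=1$ (permitted since $\gamma \in [1,\infty)$) applied to $X_k$ and $S$ produces
\begin{equation*}
\sup_{s,t \leq T} \int_{\RR^d} \min(\rho_k(s,x),\mu(s,x))\,|X_k(t,s,x) - S(t,s,x)|\, dx \leq \mathcal{C}_{\gamma,k}(2T)\, \log(1+\log(1+\tilde\delta_{T,k}^{-1}))^{-1/2},
\end{equation*}
where $\tilde\delta_{T,k}^{2} = \int_{Q_T}\mu|\nabla p_k + V|^2$, and analogously for $Y_k$ versus $Z$. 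The bulk of the work lies in passing to the limit on each side.

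For the right-hand side, I first observe that $\mathcal{C}_{\gamma,k}(2T)$ can be taken uniform in $k$: the constant is controlled by $\mathcal{B}_{1}(2T)$ from Lemma \ref{lem:map_stability}, which involves $\norm{\rho_k+\mu}_{L^2(Q_T)}$, $\norm{\nabla p_k}_{L^4(Q_T)}^2$, $\norm{p_k D^2 p_k}_{L^2(Q_T)}$ and the weighted $L^1$ bound on $u_k$. All of these are bounded independently of $k$ by the estimates of Section \ref{sec:estimates} applied to the mollified initial data, which inherit uniform (ID1-ID3) bounds. Next, the strong convergence $\nabla p_k \to \nabla p$ in $L^2(Q_T)$ from Lemma \ref{lem:precompact}, together with $\mu \in L^{\infty}$, yields $\tilde\delta_{T,k} \to (\int_{Q_T}\mu|\nabla p+V|^2)^{1/2} \leq \norm{\mu}_{L^{\infty}}^{1/2}\delta_T$. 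Since the function $\tau\mapsto \log(1+\log(1+\tau^{-1}))^{-1/2}$ is monotone and slowly varying, this limiting value can be absorbed into $\log(1+\log(1+\delta_T^{-1}))^{-1/2}$ by enlarging the constant $\mathcal{C}_{\gamma}(2T)$.

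The delicate step is passing to the limit on the left-hand side, since the convergence $X_k \to X$ is only known in the truncated sense of Lemma \ref{lem:flow_cauchy}. To accommodate this, I fix $M > 0$ and split by the triangle inequality:
\begin{equation*}
\int \min(\rho,\mu) \min(|X-S|,M)\, dx \leq \int \min(\rho,\mu) \min(|X-X_k|,M)\, dx + \int \min(\rho,\mu) \min(|X_k-S|,M)\, dx.
\end{equation*}
Because $\min(\rho,\mu) \leq \rho$, the first integral vanishes as $k\to\infty$ by Lemma \ref{lem:flow_cauchy}. For the second, I bound $\min(\rho,\mu) \leq \min(\rho_k,\mu) + |\rho - \rho_k|$ and discard the inner truncation: the $\min(\rho_k,\mu)$ contribution is exactly controlled by the bound from Proposition \ref{prop:map_stability} displayed above, while the remainder is at most $M\norm{\rho-\rho_k}_{L^1(\RR^d)}$, which vanishes by the strong $L^1$ convergence $\rho_k\to\rho$. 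Sending $k\to\infty$ and then $M\to\infty$ by monotone convergence produces \eqref{eq:x_final_stability_bound}. The argument for \eqref{eq:y_final_stability_bound} is identical, with $Y_k, Z$ replacing $X_k, S$ and $t \leq s$ replacing $t \leq T$.

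The main technical obstacle is precisely organizing this truncation procedure so that the rather weak mode of convergence of the flow maps (essentially Cauchy in a truncated $L^1(\rho)$ metric) combines cleanly with the quantitative bound from Proposition \ref{prop:map_stability}. The role of the truncation by $M$ is to give a finite handle on $|X_k - S|$ while the $L^1$ discrepancy between $\min(\rho,\mu)$ and $\min(\rho_k,\mu)$ is being controlled; once $k$ has been sent to infinity the truncation can be safely removed.
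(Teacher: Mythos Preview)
Your argument is correct and follows essentially the same route as the paper: truncate by $M$, replace $(\rho,X)$ by $(\rho_k,X_k)$ using the strong $L^1$ convergence of $\rho_k$ and the Cauchy property from Lemma \ref{lem:flow_cauchy}, apply Proposition \ref{prop:map_stability} at level $k$, and then send $M\to\infty$. You are in fact more careful than the paper on two points it glosses over: the uniformity in $k$ of the constant $\mathcal{C}_{\gamma,k}$, and the passage from $\tilde\delta_{T,k}=(\int_{Q_T}\mu|\nabla p_k+V|^2)^{1/2}$ to $\delta_T$ via the strong $L^2$ convergence of $\nabla p_k$.
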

 \begin{proof}
Arguing as in Lemma \ref{lem:flow_cauchy}, it follows that for any $M\geq 0$
 \[
 \int_{\RR^d} \min(\mu(s,x), \rho(s,x))\min(|X(t,s,x)-S(t,s,x)|,M)\, dx=
 \]
 \[
 \lim_{k\to\infty} \int_{\RR^d} \min(\mu(s,x), \rho_k(s,x))\min(|X_k(t,s,x)-S(t,s,x)|,M)\, dx\leq 
 \]
 \[
\lim_{k\to\infty}\sup_{s\leq T}\sup_{t\leq T}\int_{\RR^d}  \min(\mu(s,x), \rho_k(s,x))|X_k(t,s,x)-S(t,s,x)|\leq \mathcal{C}_{\gamma}(2T)\log(1+\log(1+\delta_{T}^{-1}))^{-1/2}.
\]
 Sending $M\to\infty$ the result follows.

 \end{proof}

\begin{prop}\label{prop:continuity_uniqueness}
 Let $X$ and $Y$ be the $L^1_{\loc}([0,\infty);L^1(\rho))$ limits of $X_{k}$ and $Y_{k}$.
Suppose that $\nu\in  L^{\infty}_{\loc}([0,\infty);L^1(\RR^d)\cap L^{\infty}(\RR^d))$ is a weak solution to the continuity equation 
\begin{equation}\label{eq:nu_continuity}
\partial_t\nu-\nabla \cdot (\nu\nabla p)=0,
\end{equation}
with initial data $\nu^0$.   If $\nu$ is everywhere nonnegative and $\nu(s,\cdot)$ is absolutely continuous with respect to $\rho(s,\cdot)$
for all $s\geq 0$, then
$X(t,s,\cdot)_{\#}\nu(s,\cdot)=\nu(s+t,\cdot)$ for all $s, t\geq 0$ almost everywhere in space and $Y(t,s,\cdot)_{\#}\nu(s,\cdot)=\nu(s-t,\cdot)$ for every $s\geq 0$ and $t\leq s$ almost everywhere in space.
\end{prop}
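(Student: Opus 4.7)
The plan is to combine the superposition principle of Ambrosio with Proposition \ref{prop:flow_uniqueness} to pin down $\nu$ as the pushforward of $\nu^0$ along $X$. I outline the argument for the forward identity $X(t,s,\cdot)_{\#}\nu(s,\cdot)=\nu(s+t,\cdot)$; the backward identity follows from the same reasoning with $Y$ and $Z$ in place of $X$ and $S$.

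First, since $\nu$ is a non-negative weak solution of $\partial_\tau\nu-\nabla\cdot(\nu\nabla p)=0$ and $\int_0^T\int|\nabla p|\,d\nu\,d\tau<\infty$ (which follows from $\nu\in L^\infty_{\loc}([0,\infty);L^1\cap L^\infty)$ combined with $\nabla p\in L^2(Q_T)$ and the finite second moment of $\rho$ inherited by $\nu\ll\rho$), the superposition principle yields a finite non-negative measure $\sigma$ on $C([0,T];\RR^d)$ such that the time-$\tau$ evaluation pushforward of $\sigma$ equals $\nu(\tau,\cdot)$ for every $\tau\in[0,T]$, and $\sigma$-almost every curve $\gamma$ is absolutely continuous with $\dot\gamma(\tau)=-\nabla p(\tau,\gamma(\tau))$ for a.e.\ $\tau$. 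A Borel measurable selection from $\sigma$ (for instance via Kuratowski--Ryll-Nardzewski applied to the disintegration of $\sigma$ against the initial-point evaluation) then produces a candidate flow $\tilde X(t,s,\cdot)$ with the properties that $\tau\mapsto\tilde X(\tau-s,s,x)$ is an integral curve of $-\nabla p$ for $\nu(s,\cdot)$-a.e.\ $x$, and $\tilde X(t,s,\cdot)_{\#}\nu(s,\cdot)=\nu(s+t,\cdot)$ for all $s,t$.

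Next, I apply Proposition \ref{prop:flow_uniqueness} with the choices $V:=-\nabla p$, $\mu:=\nu$, and $S:=\tilde X$. The flow equation (\ref{eq:s_map}) holds by construction, and the pushforward bounds (\ref{eq:mu_pushforward_bound_s})--(\ref{eq:mu_pushforward_bound_z}) hold with constant $C=0$ exactly because $\tilde X_{\#}\nu(s,\cdot)=\nu(s+t,\cdot)$ by construction. Crucially, $\delta_T=\|\nabla p+V\|_{L^2(Q_T)}=0$, so the right-hand side of the stability estimate (\ref{eq:x_final_stability_bound}) vanishes, giving $\int\min(\nu(s,\cdot),\rho(s,\cdot))\,|X(t,s,\cdot)-\tilde X(t,s,\cdot)|\,dx=0$ for all $s,t\leq T$. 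Since $\nu(s,\cdot)\ll\rho(s,\cdot)$ implies $\min(\nu(s,\cdot),\rho(s,\cdot))>0$ wherever $\nu(s,\cdot)>0$, this forces $\tilde X(t,s,x)=X(t,s,x)$ for $\nu(s,\cdot)$-a.e.\ $x$. Pushing $\nu(s,\cdot)$ forward along either map therefore gives the same measure, proving $X(t,s,\cdot)_{\#}\nu(s,\cdot)=\nu(s+t,\cdot)$.

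The main obstacle is the measurable selection step that converts the superposition measure $\sigma$ into a single-valued map $\tilde X$ compatible with the hypotheses of Proposition \ref{prop:flow_uniqueness}. If one wishes to avoid that step, an inspection of the proof of Lemma \ref{lem:map_stability} shows that the doubly logarithmic quantities $I_T$ and $J_T$ only depend on the pushforward of $\mu$ along the trajectories, so the stability argument extends directly to ``random'' flows sampled from $\sigma$ without any selection. Either formulation reduces the problem to the observation that, once $\delta_T=0$ is substituted, the stability estimate forces any integral curve of $-\nabla p$ issuing from a point in $\spt(\nu(s,\cdot))$ to coincide with the Lagrangian flow $X$, which is exactly the content of the proposition.
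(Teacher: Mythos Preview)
Your approach via the superposition principle is genuinely different from the paper's, and the overall strategy---identify another flow carrying $\nu$ and then invoke the stability Proposition~\ref{prop:flow_uniqueness} with $\delta_T=0$---is sound. However, the measurable selection step has a real gap. Disintegrating $\sigma$ over the evaluation $e_s$ and picking one curve $\gamma_x$ from each fiber $\sigma_x$ does \emph{not} in general yield $\tilde X(t,s,\cdot)_{\#}\nu(s,\cdot)=\nu(s+t,\cdot)$: that identity holds for the full path measure, $(e_{s+t})_{\#}\sigma=\nu(s+t,\cdot)$, but a selection replaces the average over $\sigma_x$ by a single sample, which changes the pushforward unless every $\sigma_x$ is already a Dirac---precisely the uniqueness you are trying to prove. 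Without the pushforward bound (\ref{eq:mu_pushforward_bound_s}) for $\tilde X$, Proposition~\ref{prop:flow_uniqueness} does not apply, and the argument becomes circular.

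Your fallback---running the doubly logarithmic estimate directly on $\sigma$---can be made to work, but it is more than ``an inspection'': you must rewrite $I_T$ as an integral over $C([0,T];\RR^d)$ against $\sigma$, redo the time-derivative computation with $\dot\gamma(s+t)$ in place of $V\circ S$, and repeat the splitting argument of Proposition~\ref{prop:map_stability} in that setting, checking that the change-of-variables steps that previously used $S_{\#}\mu\asymp\mu$ now use $(e_\tau)_{\#}\sigma=\nu(\tau,\cdot)$. None of this is deep, but it needs to be written out rather than asserted.

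The paper avoids both issues by a different construction: for each $\tau>0$ it builds the Benamou--Brenier displacement interpolants $(\nu^\tau,\nabla\varphi^\tau,S^\tau)$ between the time-$\tau$ snapshots of $\nu$. Optimal transport hands you a genuine single-valued map $S^\tau$ with $S^\tau_{\#}\nu^\tau(s,\cdot)=\nu^\tau(s+t,\cdot)$ for free, so Proposition~\ref{prop:flow_uniqueness} applies directly with $(V,\mu,S)=(\nabla\varphi^\tau,\nu^\tau,S^\tau)$. The energy minimality of the interpolant then gives $\int\nu^\tau|\nabla\varphi^\tau+\nabla p|^2\to 0$ as $\tau\to 0$, so $\delta_T\to 0$ and $S^\tau\to X$; the absolute continuity $\nu\ll\rho$ is used only at the very end to pass to the limit on the set where $\rho$ is small. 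This route trades the superposition principle for elementary optimal transport and requires no modification of the Section~\ref{sec:flows} machinery.
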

\begin{remark}
This proposition gives another way to argue that $X$ and $Y$ are the unique forward and backward flow maps along $-\nabla p$ when restricted to the support of $\rho$ (see Ambrosio's superposition principle \cite{ambrosio_2008}). 
\end{remark}
\begin{proof}
Fix a time step $\tau>0$ and for each $k\in \ZZ_+$, we construct the optimal transport  interpolants between $\nu(\tau k, \cdot)$ and $\nu(\tau (k+1),\cdot)$ associated to the quadratic transportation cost.  Doing so, we obtain $\nu^\tau, \vp^{\tau}, S^{\tau}$ where $\nu^{\tau}$ is a density such that $\nu^{\tau}(k\tau,\cdot)=\nu(k\tau,\cdot)$ for all $k$, $\nu^{\tau}, \vp^{\tau}$ are weak solutions to the continuity equation
\begin{equation}\label{eq:nu_tau_continuity}
\partial_t \nu^{\tau}+\nabla \cdot(\nu^{\tau}\nabla \vp^{\tau})=0,
\end{equation}
and $S^{\tau}$ satisfies
\[
\partial_t S^{\tau}(t,s,x)=\nabla \vp^{\tau}(s+t,S^{\tau}(t,s,x)), \quad S^{\tau}(t,s,\cdot)_{\#}\nu^{\tau}(s,\cdot)=\nu^{\tau}(s+t,\cdot),
\]
see for instance \cite{otam}.
Furthermore, if we define $m^{\tau}=\nu^{\tau}\nabla \vp^{\tau}$, then for any $j\in \ZZ_+$ we have
\[
\int_{0}^{j\tau}\int_{\RR^d} \frac{|m^{\tau}|^2}{2\nu^{\tau}}=\inf_{(\mu, b)\in \mathcal{C}_{\tau}} \int_{0}^{\tau j}\int_{\RR^d} \frac{|b|^2}{2\mu}
\]
where $C_{\tau}$ is the space of all density-flux pairs $(\mu, b)\in L^1_{\loc}(Q_{\infty})\times L^2_{\loc}(Q_{\infty})$ that are weak solutions to the continuity equation $\partial_t \mu+\nabla \cdot b=0$ such that $\mu(\tau k, \cdot)=\nu(\tau k, \cdot)$ for all $k\in \ZZ_+$. Note that $(\nu, \nu\nabla p)\in C_{\tau}$ for any choice of $\tau$, hence,
\[
\int_{0}^{j\tau}\int_{\RR^d} \frac{|m^{\tau}|^2}{2\nu^{\tau}}\leq \int_{0}^{j\tau}\int_{\RR^d} \frac{\nu}{2}|\nabla p|^2.
\]

Given any $\psi\in H^1_c(Q_{\infty})$ and $j\in \ZZ_+$, we have \[
\int_{0}^{j\tau}\int_{\RR^d} (\nu-\nu^{\tau})\psi=\int_{\RR^d}\sum_{k=0}^{j-1}\int_{k\tau}^{(k+1)\tau} \int_{k\tau}^s \Big(\nu^{\tau}(\theta,x)\nabla \vp^{\tau}(\theta,x)-\nu(\theta,x)\nabla p(\theta,x)\Big)\cdot \nabla \psi(\theta,x)\, d\theta \, ds\, dx,
\]
thus it follows that $\nu^{\tau}$ converges to $\nu$ in $\dot{H}^{-1}_{\loc}(Q_{\infty})$ as $\tau\to 0$. Hence, for any $\psi\in H^1_c(Q_{\infty})$ it follows from (\ref{eq:nu_continuity}) and (\ref{eq:nu_tau_continuity}) that
\[
\lim_{\tau\to 0} \int_{Q_{\infty}} (m^{\tau}-\nu\nabla p)\cdot \nabla \psi=0
\]
so $m^{\tau}$ converges weakly to $\nu\nabla p+w$ where $w$ is some divergence free vector field.  Given some $T>0$ let $j_{\tau}=\lceil\frac{T}{\tau}\rceil$.  We can then compute
\[
\int_{0}^{j_{\tau}\tau}\int_{\RR^d} \frac{1}{2}\nu^{\tau}|\nabla \vp^{\tau}-\nabla p|^2=\int_{0}^{j_{\tau}\tau}\int_{\RR^d} \frac{|m^{\tau}|^2}{2\nu^{\tau}}-m^{\tau}\cdot \nabla p+\frac{1}{2}\nu^{\tau}|\nabla p|^2.
\]
\[
\leq \int_{0}^{j_{\tau}\tau}\int_{\RR^d} \frac{\nu}{2}|\nabla p|^2-m^{\tau}\cdot \nabla p+\frac{1}{2}\nu^{\tau}|\nabla p|^2.
\]
 $\nu^{\tau}$ must converge weakly to $\nu$ in $L^2_{\loc}([0,\infty);L^2(\RR^d))$, therefore
\[
\lim_{\tau\to 0}\int_0^{j_{\tau}\tau}\int_{\RR^d} \frac{1}{2}\nu^{\tau}|\nabla \vp^{\tau}-\nabla p|^2=0.
\]
Now we can use Proposition \ref{prop:flow_uniqueness} to deduce that for any $s,t\geq 0$ \begin{equation}\label{eq:continuity_1}\lim_{\tau\to 0} \int_{\RR^d}\min(\nu^{\tau}(s,x),\rho(s,x)) |S^{\tau}(t,s,x)-X(t,s,x)|\, dx=0.
\end{equation}

Finally, we can establish the pushforward formulas for $\nu$.
Let $\vp:\RR^d\to\RR$ be a smooth compactly supported test function.  We can compute
\[
|\int_{\RR^d} \big(\nu^{\tau}(s+t,x)-X(t,s,\cdot)_{\#}\nu(s,\cdot)\big)\vp(x)\, dx|=
\]
\[
|\int_{\RR^d} \nu^{\tau}(s,x)\vp(S^{\tau}(t,s,x))-\nu(s,x)\vp(X(t,s,x))\, dx|=
\]
\[
|\int_{\RR^d} \nu^{\tau}(s,x)\Big(\vp(S^{\tau}(t,s,x))-\vp(X(t,s,x))\Big)\, dx|
\]
Fix some $\epsilon>0$ and let $\Omega_{\epsilon}(s)=\{x\in \RR^d: \rho(s,x)<\epsilon\}$.  The previous line is then bounded from above by 
\begin{multline}\label{eq:continuity_2}
\epsilon^{-1}\norm{\nabla \vp}_{L^{\infty}(\RR^d)}\int_{\RR^d} \nu^{\tau}(s,x)\rho(s,x)|S^{\tau}(t,s,x))-X(t,s,x)|\, dx\\
+2\norm{\vp}_{L^{\infty}(\RR^d)}\int_{\Omega_{\epsilon}(s)}\nu^{\tau}(s,x)\, dx.
\end{multline}
Sending $\tau\to 0$ we see that (\ref{eq:continuity_2}) is equal to
\[
2\norm{\vp}_{L^{\infty}(\RR^d)}\int_{\Omega_{\epsilon}(s)}\nu(s,x)\, dx.
\]
Thanks to our assumption that $\nu$ is absolutely continuous with respect to $\rho$, this last integral vanishes as $\epsilon\to 0$.  Thus it follows that $X(t,s,\cdot)_{\#}\nu(s,\cdot)=\nu(s+t,\cdot)$ almost everywhere in space and for every $s, t\geq 0$.  Since $Y(t,s,x)$ is the inverse of $X(t,s-t,x)$ we also have $Y(t,s,\cdot)_{\#}\nu(s,\cdot)=\nu(s-t,\cdot)$ almost everywhere in space and for every $s\geq 0$ and $t\leq s$.

\end{proof}

Now we can prove Theorem \ref{thm:main1}. 

\begin{proof}[Proof of Theorem \ref{thm:main1}]
From the convergence and uniqueness properties that we have established above, it is clear that $(\rho_{1}, \ldots, \rho_{\ell}, p,n)$ is a complete Lagrangian solution to the tumor growth system.

It remains to prove the nonmixing property.
Let $\rho_{i,j}=\min(\rho_i,\rho_j)$.  From the pushforward representation formula (\ref{eq:rho_representation}), it follows that $Y(t,t,\cdot )_{\#} \rho_{i,j}(t,\cdot)\leq e^{tB}\min(\rho_i^0,\rho_j^0).$  If $\min(\rho_i^0,\rho_j^0)=0$, then it follows that $\rho_{i,j}=0$.  Hence the nonmixing property holds.
\end{proof}

\subsection{The incompressible limit}\label{ssec:compactness_incompressible}

Now we want to construct solutions in the case $\gamma=\infty$.  Given growth terms $G_1,\ldots, G_{\ell}$ satisfying (G1-G4) and initial data $(\rho_1^0,\ldots, \rho_{\ell}^0, n^0)$ satisfying (ID1-ID5), we create a modified sequence of initial data as follows.  Recall that the initial pressure $p^0$ must solve the equation
\begin{equation}
    \Delta p^0+\sum_{i=1}^{\ell} \frac{\rho_i^0}{\rho^0}G_i(p^0,n^0)=0, \quad p^0(1-\rho^0)=0.
\end{equation}
 Using $p^0$, we create the sequence by setting $\rho^0_{\gamma}=(p^0)^{\gi}$ and $\rho_{i,\gamma}^0=\rho^0_{\gamma}\frac{\rho_i^0}{\rho_0}$. 
The key properties of this sequence are summarized below.
\begin{lemma}
$(\rho_{1,\gamma}^0,\ldots, \rho_{\ell,\gamma}, n^0)$ satisfies (ID1-ID3), 
\begin{equation}\label{eq:gamma_poisson_condition}
\sup_{\gamma} \int_{\RR^d} \gamma\rho^0_{\gamma}(\Delta p_0+\sum_{i=1}^{\ell} \frac{\rho_{i,\gamma}^0}{\rho_{\gamma}^0}G_i(p^0,n^0))_{-}^2=0,
\end{equation}
and $\lim_{\gamma\to \infty} \norm{\rho_i^0-\rho_{i,\gamma}^0}_{L^1(\RR^d)}=0.$
\end{lemma}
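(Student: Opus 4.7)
The plan is to exploit the specific construction: setting $\rho_\gamma^0=(p^0)^{\gi}$ and $\rho_{i,\gamma}^0=\rho_\gamma^0\,\rho_i^0/\rho^0$ makes the ratios $\rho_{i,\gamma}^0/\rho_\gamma^0=\rho_i^0/\rho^0$ independent of $\gamma$, so that the expression appearing inside the parentheses of the displayed $\sup_\gamma$ collapses onto the defining equation for $p^0$. Indeed, wherever $\rho_\gamma^0>0$,
\[
\Delta p^0+\sum_{i=1}^{\ell}\frac{\rho_{i,\gamma}^0}{\rho_\gamma^0}G_i(p^0,n^0)=\Delta p^0+\sum_{i=1}^{\ell}\frac{\rho_i^0}{\rho^0}G_i(p^0,n^0)=0
\]
by the Poisson-type equation for $p^0$ stated just above the lemma. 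This instantly delivers the vanishing of the $\sup_\gamma$ (the integrand is zero) as well as the (ID3) integrability (trivially).

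For the remaining parts of (ID1) and (ID2): the constraint $p^0(1-\rho^0)=0$ combined with (ID4) forces $\{p^0>0\}\subseteq\{\rho^0=1\}$, and together with the bound $p^0\leq p_h$ (via the maximum principle applied to the Poisson equation, using (G2)) this gives the pointwise domination $0\leq\rho_\gamma^0=(p^0)^{\gi}\leq p_h^{\gi}\,\rho^0$. The $L^1$, $L^\infty$, and weighted-second-moment parts of (ID1) then transfer from $\rho^0$ to $\rho_\gamma^0$, and a fortiori to each $\rho_{i,\gamma}^0\leq\rho_\gamma^0$; (ID2) is unchanged because the construction does not touch $p^0$ or $n^0$.

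For the $L^1$ convergence, observe that $\rho_i^0-\rho_{i,\gamma}^0=(1-(p^0)^{\gi})\rho_i^0$ on $\{\rho^0=1\}$ and vanishes elsewhere. Condition (ID5) guarantees $\rho^0\log(1+1/p^0)^{\lambda}\in L^1(\RR^d)$, so $\log(1+1/p^0)$ is finite a.e.\ on $\{\rho^0=1\}$, which forces $p^0>0$ a.e.\ on the support of $\rho_i^0$. Therefore $(p^0)^{\gi}\to 1$ pointwise a.e.\ on that support as $\gamma\to\infty$, and the majorant $|1-(p^0)^{\gi}|\rho_i^0\leq\rho_i^0\in L^1(\RR^d)$ allows one to pass to the limit by dominated convergence.

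The main (mild) obstacle is precisely this last step: without (ID5) one could have a positive-measure subset of $\{\rho^0=1\}$ on which $p^0=0$, where $(p^0)^{\gi}\equiv 0\not\to 1$, and the $L^1$ convergence would fail. Everything else in the lemma is direct algebra forced by the construction.
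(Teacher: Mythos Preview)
Your proof is correct and follows essentially the same approach as the paper: the paper dismisses the first two claims as ``clear from our construction'' and handles the $L^1$ convergence exactly as you do, using (ID5) to show $p^0>0$ a.e.\ on the support of $\rho^0$, hence $(p^0)^{1/\gamma}\to 1$ pointwise there, and concluding by dominated convergence. One tiny quibble: your majorant $|1-(p^0)^{1/\gamma}|\rho_i^0\leq \rho_i^0$ tacitly assumes $p^0\leq 1$, but since $p^0\leq p_h$ the obvious majorant $(1+p_h)\rho_i^0$ works in general.
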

\begin{proof}
The first two claims are clear from our construction. For the last property, we note that (ID5) implies the existence of some $\lambda>0$ such that $\int_{\RR^d} \rho^0\log(1+1/p^0)^{\lambda}<\infty$, thus, for any $\epsilon>0$ the set $\{x\in \RR^d: p^0<\epsilon\}$ has $\rho^0$ measure at most $\frac{1}{|\log(\epsilon)|^{\lambda}}$.  Thus $\lim_{\gamma\to \infty} (p^0(x))^{\gi}=1$ almost everywhere on the support of $\rho^0$.  Since $p^0=0$ on the complement of the support of $\rho^0$, we can deduce $\lim_{\gamma\to \infty} \norm{\rho_i^0-\rho_{i,\gamma}^0}_{L^1(\RR^d)}=0$  from dominated convergence.
\end{proof}

Now that we have a sequence of initial data satisfying (ID1-ID3), for each $\gamma\geq 1$ we can use Theorem \ref{thm:main1} to construct complete Lagrangian solutions $(\rho_{1,\gamma}, \ldots, \rho_{\ell,\gamma}, p_{\gamma}, n_{\gamma})$ to (\ref{eq:multi_pme}-\ref{eq:nutrients}) with initial data 
$(\rho_1^0,\ldots, \rho_{\ell}^0, n^0)$. Our goal is now to show that these solutions converge to a complete Lagrangian solution to the incompressible system as we send $\gamma\to\infty$.
Due to the fact that we only have uniform $L^1$ regularity for the time derivative of the pressure along the sequence, we will need to proceed more carefully than we did in the case $\gamma<\infty$.

\begin{lemma}\label{lem:incompressible_precompact}
If $\gamma_k$ is a sequence  such that $\lim_{k\to\infty} \gamma_k=\infty$ then $p_{\gamma_k}$ is precompact in $L^2([0,T];H^1(\RR^d))$,  $n_{\gamma_k}$ is precompact in $L^2([0,T];H^1(\RR^d))\cap C([0,T];L^2(\RR^d))$, and $\rho_{i,\gamma_k}$ is weakly precompact in $L^1(Q_T)$ for each $i\in \{1,\ldots, \ell\}$. Furthermore, $\rho_{\gamma_k}$ is precompact in $C([0,T];L^1(\RR^d))$.
\end{lemma}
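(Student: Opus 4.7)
The plan is to follow the outline of Lemma \ref{lem:precompact}, but relying throughout on the $\gamma$-independent versions of the Section \ref{sec:estimates} estimates made available by assumptions (G4) and (ID5). Specifically, Propositions \ref{prop:phessian}, \ref{prop:ab}, and \ref{prop:u_l1} provide $\gamma_k$-independent control on $\|\nabla p_{\gamma_k}\|_{L^\infty_t L^2_x}$, $\|p_{\gamma_k}^{1/2} D^2 p_{\gamma_k}\|_{L^2(Q_T)}$, $\|\nabla p_{\gamma_k}\|_{L^4(Q_T)}$, and on the weighted quantity $\|\rho_{\gamma_k} \log(1+1/p_{\gamma_k})^{\lambda'-1} |u_{\gamma_k}|\|_{L^1(Q_T)}$ for any fixed $\lambda' \in (0,1)\cap(0,\lambda]$, together with the pointwise bounds $p_{\gamma_k} \le p_h$ and $\rho_{\gamma_k} \le p_h^{1/\gamma_k}$.

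For the pressure I would reuse the auxiliary $q_{\gamma_k} := p_{\gamma_k}^{1+1/\gamma_k}$ from Lemma \ref{lem:precompact}: its $L^2_t H^1_x$ regularity follows immediately from the above, while its time derivative $|\partial_t q_{\gamma_k}| \lesssim \rho_{\gamma_k}|\partial_t p_{\gamma_k}|$ is uniformly bounded in $L^1(Q_T)$ after factoring the awkward piece as
\[
\rho_{\gamma_k} p_{\gamma_k} |u_{\gamma_k}| = \bigl(\rho_{\gamma_k} \log(1+1/p_{\gamma_k})^{\lambda'-1}|u_{\gamma_k}|\bigr)\cdot\bigl(p_{\gamma_k} \log(1+1/p_{\gamma_k})^{1-\lambda'}\bigr),
\]
whose second factor is bounded on $[0,p_h]$ since $\lambda'<1$. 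A Fr\'echet-Kolmogorov argument combining $L^2_t H^1_x$ regularity with $L^1$-type time equicontinuity then yields strong $L^2(Q_T)$ precompactness of $q_{\gamma_k}$, and hence of $p_{\gamma_k}$ since $1/\gamma_k \to 0$; the $\epsilon$-truncation step of Lemma \ref{lem:precompact} upgrades this to strong $L^2_t H^1_x$ compactness using the $\gamma$-independent bound on $\|p_{\gamma_k}^{-1/4}\nabla p_{\gamma_k}\|_{L^2(Q_T)}$ from Lemma \ref{lem:p_grad_bound}. The nutrient claim is immediate from Lemma \ref{lem:nutrient} and standard Aubin-Lions, and weak $L^1(Q_T)$ compactness of $\rho_{i,\gamma_k}$ follows from Dunford-Pettis using the uniform $L^\infty$ bound and tightness from second moments.

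The main obstacle is the strong $C([0,T]; L^1(\RR^d))$ compactness of the total density $\rho_{\gamma_k}$, since $\rho_{\gamma_k}$ enjoys no direct spatial Sobolev regularity. My approach is to piggy-back on the Lagrangian structure built in Subsection \ref{ssec:compactness_gamma}: at each finite $\gamma_k$, Theorem \ref{thm:main1} furnishes a complete Lagrangian solution with forward flow $X_{\gamma_k}$, and Proposition \ref{prop:map_stability} applied with $(V,\mu,S)=(-\nabla p_{\gamma_j}, \rho_{\gamma_j}, X_{\gamma_j})$ gives Cauchyness of $X_{\gamma_k}$ in $L^1(\min(\rho_{\gamma_k}, \rho_{\gamma_j}))$ since $\delta_T \le \|\nabla p_{\gamma_k}-\nabla p_{\gamma_j}\|_{L^2(Q_T)} \to 0$ by the pressure compactness just established. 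To extract strong $L^1$ spatial compactness of $\rho_{\gamma_k}$ at each $t$-slice, I would use the dichotomy $\rho_{\gamma_k} = \rho_{\gamma_k}\mathbf{1}_{\{p_{\gamma_k}>\epsilon\}} + \rho_{\gamma_k}\mathbf{1}_{\{p_{\gamma_k}\le\epsilon\}}$: on $\{p_{\gamma_k}>\epsilon\}$, $\rho_{\gamma_k} = p_{\gamma_k}^{1/\gamma_k} \to 1$ pointwise wherever $p_{\gamma_k} \to p$ pointwise (and this piece is handled by Vitali using the $L^\infty$ bound), while on $\{p_{\gamma_k}\leq\epsilon\}$ the weighted bound $\int\rho_{\gamma_k}\log(1+1/p_{\gamma_k})^{\lambda'}\,dx \le C$ from Proposition \ref{prop:u_l1} gives $L^1$-smallness of order $\log(1+1/\epsilon)^{-\lambda'}$ uniformly in $\gamma_k$ and $t$. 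Combining this dichotomy with time equicontinuity obtained by testing the continuity equation against $W^{1,\infty}$ functions (yielding H\"older regularity in $(W^{1,\infty})^*$, upgraded via a mollification/density argument to $L^1$ equicontinuity using the slicewise compactness) then produces the desired $C([0,T]; L^1(\RR^d))$ compactness via Ascoli-Arzel\`a. The delicate point is this final Ascoli upgrade: the spatial dichotomy gives slicewise strong $L^1$ compactness but only weak-type time equicontinuity from the continuity equation, and bridging the two requires careful use of tightness and the $L^\infty$ uniform boundedness.
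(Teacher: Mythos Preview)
Your treatment of $p_{\gamma_k}$, $n_{\gamma_k}$, and the weak compactness of $\rho_{i,\gamma_k}$ matches the paper, which simply refers back to Lemma~\ref{lem:precompact}. Your factorisation of $\rho p|u|$ to recover an $L^1$ time-derivative bound on $q_{\gamma_k}$ is exactly the right substitute for the $L^2$ bound that fails as $\gamma\to\infty$.

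For the hard part, the $C([0,T];L^1)$ compactness of $\rho_{\gamma_k}$, your route diverges from the paper's and the final step is where the gap lies. Your detour through the flow maps $X_{\gamma_k}$ is not used in what follows and can be dropped. More importantly, your slicewise dichotomy shows that $\rho_{\gamma_k}(t,\cdot)$ is $L^1$-precompact for each fixed $t$ (using $\rho_{\gamma_k}\approx 1$ on $\{p_{\gamma_k}>\epsilon\}$ and the log bound on $\{p_{\gamma_k}\le\epsilon\}$), but this together with weak $(W^{1,\infty})^*$ time-equicontinuity does \emph{not} directly yield $C_tL^1_x$ compactness: Ascoli requires equicontinuity in the target norm, and an Ehrling-type interpolation is unavailable because you have no single Banach space $B\hookrightarrow\hookrightarrow L^1$ containing all the $\rho_{\gamma_k}(t,\cdot)$ uniformly. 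Your proposed ``mollification/density'' fix needs precisely $\sup_{k,t}\|\rho_{\gamma_k}(t,\cdot)-K_\epsilon*\rho_{\gamma_k}(t,\cdot)\|_{L^1}\to 0$, i.e.\ \emph{uniform-in-$t$} spatial $L^1$-equicontinuity, which your slicewise argument does not provide.

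The paper closes this gap by a direct Riesz--Kolmogorov estimate rather than slicewise compactness. The key identity you are missing is
\[
\int_{\RR^d} p_{\gamma_k}^2|\nabla\rho_{\gamma_k}|^2 \;=\; \frac{1}{\gamma_k^2}\int_{\RR^d}\rho_{\gamma_k}^2|\nabla p_{\gamma_k}|^2,
\]
which is bounded uniformly in $k$ and $t$. This gives genuine gradient control on $\eta_\epsilon(p_{\gamma_k})\rho_{\gamma_k}$ (with $\eta_\epsilon$ a smooth cutoff supported on $\{p>\epsilon/2\}$), and combined with your same log bound on the $\{p_{\gamma_k}\le\epsilon\}$ piece yields $\sup_{k,t}\|\rho_{\gamma_k}(t,\cdot+y)-\rho_{\gamma_k}(t,\cdot)\|_{L^1}\to 0$ as $|y|\to 0$. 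From there the mollification argument you envisaged goes through cleanly: $\rho_{\epsilon,\gamma_k}:=K_\epsilon*\rho_{\gamma_k}$ approximates uniformly in $C_tL^1_x$, has $\partial_t\rho_{\epsilon,\gamma_k}$ bounded in $C_tL^2_x$ via the continuity equation, and Aubin--Lions finishes.
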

\begin{proof}
For the first properties, we can argue as in Lemma \ref{lem:precompact}, except that we can no longer establish that $p_{\gamma_k}$ is $C([0,T];L^2(\RR^d))$ precompact.

To establish the precompactness of $\rho_{\gamma_k}$ in $C([0,T];L^1(\RR^d))$, we begin by showing that $\rho_{\gamma_k}$ is spatially equicontinuous in $C([0,T];L^1(\RR^d))$. Note that for any $t\geq 0$, 
\[
\int_{\RR^d\times\{t\}} p_{\gamma_k}^2|\nabla \rho_{\gamma_k}|^2=\int_{\RR^d\times\{t\}} \frac{1}{\gamma_k^2} \rho_{\gamma_k}^2|\nabla p_{\gamma_k}|^2
\]
which is uniformly bounded with respect to $\gamma_k$. Fix some $\epsilon>0$ and let $\eta:\RR\to\RR$ be a smooth increasing function such that $\eta(a)=0$ if $a\leq \frac{1}{2}$ and $\eta(a)=1$ if $a\geq 1$.  Define $\eta_{\epsilon}(a):=\eta(\frac{a}{\epsilon})$.   For any $t\geq 0$ and $y\in \RR^d$
\[
\int_{\RR^d} |\rho_{\gamma_k}(t,x)-\rho_{\gamma_k}(t,x+y)|\, dx\leq
\]
\[
\int_{\RR^d} 2|(1-\eta_{\epsilon}\big(p_{\gamma_k}(t,x)\big)\rho_{\gamma_k}(t,x)|+|\eta_{\epsilon}\big(p_{\gamma_k}(t,x)\big)\rho_{\gamma_k}(t,x)-\eta_{\epsilon}\big(p_{\gamma_k}(t,x+y)\big)\rho_{\gamma_k}(t,x+y)|\, dx.
\]
Taking $\lambda'$ to be the constant in Proposition \ref{prop:u_l1}, we have the bound
\[
\int_{\RR^d} 2|(1-\eta_{\epsilon}\big(p_{\gamma_k}(t,x)\big)\rho_{\gamma_k}(t,x)|\leq 2\log(1+\epsilon^{-1})^{-\lambda'}\int_{\RR^d\times \{t\}} \rho_{\gamma_k}\log(1+\frac{1}{p_{\gamma_k}})^{\lambda'}.
\]
We can also estimate 
\[
\int_{\RR^d} |\eta_{\epsilon}\big(p_{\gamma_k}(t,x)\big)\rho_{\gamma_k}(t,x)-\eta_{\epsilon}\big(p_{\gamma_k}(t,x+y)\big)\rho_{\gamma_k}(t,x+y)|\, dx 
\]
\[
\leq |y|\int_{\RR^d}\int_0^1 \eta'_{\epsilon}\big(p_{\gamma_k}(t,x+ay)\big)\rho_{\gamma_k}(t,x+ay)|\nabla p_{\gamma_k}(t,x+ay)|+\frac{1}{\epsilon^2} p_{\gamma_k}(t,x+ay)^2|\nabla \rho_{\gamma_k}(t,x+ay)|\, da\, dx.
\]
Hence, it follows from our work above that 
\[
\lim_{|y|\to 0} \sup_{k} \int_{\RR^d} |\rho_{\gamma_k}(t,x)-\rho_{\gamma_k}(t,x+y)|\, dx\leq  2\log(1+\epsilon^{-1})^{-\lambda'} \sup_k\int_{\RR^d\times \{t\}} \rho_{\gamma_k}\log(1+\frac{1}{p_{\gamma_k}})^{\lambda'}\leq C\log(1+\epsilon^{-1})^{-\lambda'}
\]
Sending $\epsilon\to 0$, we have established the spatial equicontinuity of $\rho_{\gamma_k}$ in $C([0,T];L^1(\RR^d))$.

Now let $K$ be a spatial mollifier and consider $\rho_{\epsilon,\gamma_k}=K_{\epsilon}*\rho_{\gamma_k}$. The spatial equicontinuity of $\rho_{\gamma_k}$ in $C([0,T];L^1(\RR^d))$ implies that
\begin{equation}\label{eq:spatial_equicontinuity}
\lim_{\epsilon\to 0}\sup_k \norm{\rho_{\gamma_k}-\rho_{\epsilon,\gamma_k}}_{C([0,T];L^1(\RR^d))}=0.
\end{equation}
From equation (\ref{eq:pme}), it is also clear that 
\[
\sup_k \norm{\partial_t \rho_{\gamma_k,\epsilon}}_{C([0,T];L^{2}(\RR^d))}\lesssim_{\epsilon} \sup_k \norm{\nabla p_{\gamma_k}}_{L^{\infty}([0,T];L^2(\RR^d))}.
\] The Aubin-Lions Lemma now implies that $\rho_{\epsilon,\gamma_k}$ is precompact in $C([0,T];L^1(\RR^d))$. 
The precompactness of $\rho_{\gamma_k}$ in $C([0,T];L^1(\RR^d))$ follows from (\ref{eq:spatial_equicontinuity}).
\end{proof}

Now that we have established precompactness, in the rest of this subsection we will assume (without loss of generality) that $\gamma_k$ is a subsequence such that $(\rho_{1,\gamma_k}, \ldots, \rho_{\ell,\gamma_k}, p_{\gamma_k}, n_{\gamma_k})$ converges to a point $(\rho_{1}, \ldots, \rho_{\ell}, p, n)$ where the convergence holds in the spaces that we identified in Lemma \ref{lem:incompressible_precompact}.

We now establish some properties of the limit point.

\begin{lemma}
$(\rho_{1}, \ldots, \rho_{\ell}, p, n)$ is a weak solution to the incompressible system (\ref{eq:incompressible}-\ref{eq:incompressible_nutrients}).  Furthermore, the set $\{x\in \RR^d: \rho(t,x)>0, p(t,x)=0\}$ has measure zero for any $t\geq 0$, $\rho$ is nondecreasing in time almost everywhere, and  $p$ satisfies the complementarity formula
\begin{equation}\label{eq:variational_complementarity}
    p=\argmin_{\vp(1-\rho)=0} \int_{Q_T} \frac{1}{2}|\nabla \vp|^2-\vp G,
\end{equation}
where $G=\sum_{i=1}^{\ell}\frac{\rho_i}{\rho} G_i(p,n)$.
\end{lemma}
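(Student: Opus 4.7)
The statement has four assertions; I treat them in order. The first two are rather standard once the precompactness is in hand. The third (monotonicity) is the main obstacle, since none of the finite-$\gamma$ approximations are themselves monotone in time. The fourth follows from a convex-analysis argument once the first three are settled.

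The plan for the weak formulation is to pass to the limit in (\ref{eq:multi_pme}) and (\ref{eq:nutrients}) using the precompactness just established. For a test function in $C_c^\infty([0,T)\times\RR^d)$, the only nontrivial term is the flux $\rho_{i,\gamma_k}\nabla p_{\gamma_k}$, which converges weakly in $L^1$: $\rho_{i,\gamma_k}$ is weakly precompact in $L^1$ and bounded in $L^\infty$ (by $p_h^{1/\gamma_k}$), while $\nabla p_{\gamma_k}$ converges strongly in $L^2(Q_T)$. The nutrient equation is handled identically. The bound $\rho\leq 1$ is immediate from $\rho_{\gamma_k}\leq p_h^{1/\gamma_k}\to 1$. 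The complementarity $p(1-\rho)=0$ comes from $p_{\gamma_k}=\rho_{\gamma_k}^{\gamma_k}$: on any level set $\{\rho<1-\epsilon\}$ the strong $L^1$ convergence of $\rho_{\gamma_k}$ forces $\rho_{\gamma_k}\leq 1-\epsilon/2$ eventually, so $\rho_{\gamma_k}^{\gamma_k}\to 0$ on such sets, giving $p=0$ on $\{\rho<1\}$.

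For the measure-zero assertion, I would apply Proposition \ref{prop:u_l1}. Thanks to (G4) and (ID5) this yields, for any $\lambda'\in(0,\lambda]\cap(0,1)$, the $\gamma$-independent bound
\[\sup_{t\leq T}\int_{\RR^d}\rho_{\gamma_k}(t,x)\log\bigl(1+1/p_{\gamma_k}(t,x)\bigr)^{\lambda'}\,dx\leq C(T).\]
After extracting a further subsequence so that $\rho_{\gamma_k}\to\rho$ and $p_{\gamma_k}\to p$ pointwise a.e.\ in $Q_T$, Fatou's lemma gives $\rho\log(1+1/p)^{\lambda'}\in L^\infty([0,T];L^1(\RR^d))$. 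Since $\log(1+1/p)^{\lambda'}=+\infty$ wherever $p=0$, the set $\{\rho>0,\,p=0\}$ must have measure zero. Combined with $p(1-\rho)=0$ from the previous step, this forces $\rho\in\{0,1\}$ almost everywhere.

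The nondecreasing property is the main obstacle. Writing $\rho(t,\cdot)=\chi_{A_t}$, the goal is $A_s\subseteq A_{s+t}$ modulo null sets. The representation formula (\ref{eq:rho_representation}) summed over $i$ gives immediately that $X(t,s,\cdot)(A_s)\subseteq A_{s+t}$, since the pushforward of $\chi_{A_s}$ under $X(t,s,\cdot)$ is bounded above by $e^{tB}\rho(s+t,\cdot)=e^{tB}\chi_{A_{s+t}}$ and $\rho$ takes only the values $0$ and $1$. The reverse inclusion $A_s\subseteq A_{s+t}$ requires showing that the support cannot retreat. My plan here is to exploit the Hele--Shaw structure that emerges in the limit: formally, on $\{p>0\}=\{\rho=1\}$ one has $-\Delta p=G$ weakly, so $\partial_t\rho=\nabla\cdot(\rho\nabla p)+\rho G=\Delta p+G=0$ in the interior of the saturated region. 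To make this rigorous I would test the continuity equation against a carefully chosen family of truncations approximating $\chi_{A_s\setminus A_{s+t}}$, exploiting that on the saturated set $\rho\nabla p=\nabla p$ so that the flux reduces to $\nabla p$ and that pointwise $\rho(t,\cdot)\leq 1$ leaves no room for $A_s\setminus A_{s+t}$ to have positive measure without violating mass accounting. This step is delicate because the finite-$\gamma$ approximations do not enjoy any analogous monotonicity and because the flow map $X$ may move points out of $A_s$; the argument must instead use the structural identities that hold only in the limit.

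Finally, the complementarity formula follows from a variational argument. The functional $\mathcal{J}(\vp):=\int_{Q_T}\tfrac12|\nabla\vp|^2-\vp G$ is strictly convex on the closed convex admissible set $\mathcal{A}:=\{\vp\in L^2([0,T];\dot H^1(\RR^d)):\vp(1-\rho)=0\}$. Testing the weak form of (\ref{eq:incompressible}) against an admissible $\vp\geq 0$, using $\vp=\rho\vp$ so that $\rho\nabla p\cdot\nabla\vp=\nabla p\cdot\nabla\vp$ after integration by parts, yields
\[\int_{Q_T}\nabla p\cdot\nabla\vp-G\vp=-\int_{Q_T}\vp\,\partial_t\rho\leq 0,\]
where the inequality uses nonnegativity of $\vp$ and the nondecreasing property of $\rho$ from the previous step. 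Since $p\in\mathcal{A}$ and $p\geq 0$, choosing $\vp=p$ yields $\mathcal{J}(p)\leq 0$, while convexity together with the variational inequality shows $\mathcal{J}(p)\leq\mathcal{J}(\vp)$ for every $\vp\in\mathcal{A}$; strict convexity then identifies $p$ as the unique minimizer, establishing (\ref{eq:variational_complementarity}).
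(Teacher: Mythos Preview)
Your treatment of the weak-solution limit, the constraint $p(1-\rho)=0$, and the measure-zero assertion via Proposition~\ref{prop:u_l1} and Fatou all match the paper's approach and are correct. The complementarity step also agrees in spirit (the paper simply cites references there).

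The genuine gap is the monotonicity of $\rho$. You explicitly say ``the finite-$\gamma$ approximations do not enjoy any analogous monotonicity'' and then try to argue entirely at the limit level, invoking the flow map $X$ and the representation formula~(\ref{eq:rho_representation}). This is problematic on two counts. First, in the paper's logical order this lemma precedes the construction of the limiting flow maps in the incompressible case, so appealing to them here is circular. Second, even granting their existence, the inclusion $X(t,s,\cdot)(A_s)\subseteq A_{s+t}$ does not yield $A_s\subseteq A_{s+t}$, and your sketched ``support cannot retreat'' argument via truncations of $\chi_{A_s\setminus A_{s+t}}$ is not carried out; the Hele--Shaw identity $-\Delta p=G$ holds only on $\{\rho=1\}$ and gives no direct information about mass leaving that set.

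The paper's route is much more direct and works at the approximation level, contrary to your intuition. From the pressure form of PME one has the pointwise identity
\[
\partial_t\rho_{\gamma_k}=\nabla\rho_{\gamma_k}\cdot\nabla p_{\gamma_k}-\tfrac{1}{\gamma_k}\rho_{\gamma_k}u_{\gamma_k}.
\]
The first term on the right is pointwise nonnegative (since $\rho_{\gamma_k}=p_{\gamma_k}^{1/\gamma_k}$). For the second, Young's inequality gives $\tfrac{1}{\gamma_k}\rho_{\gamma_k}u_{\gamma_k,+}\leq \tfrac{1}{2\gamma_k^{1/2}}\rho_{\gamma_k}+\tfrac{1}{2\gamma_k^{1/2}}\bigl(\tfrac{1}{\gamma_k}\rho_{\gamma_k}u_{\gamma_k,+}^2\bigr)$, and the weighted AB estimate (Proposition~\ref{prop:ab} with $\omega(p)=\tfrac{1}{\gamma}\rho$) bounds $\tfrac{1}{\gamma_k}\rho_{\gamma_k}u_{\gamma_k,+}^2$ in $L^1(Q_T)$ uniformly in $k$. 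Hence for any nonnegative test function $\psi$,
\[
\int_{Q_T}\psi\,\partial_t\rho_{\gamma_k}\geq -C\gamma_k^{-1/2}\to 0,
\]
so $\partial_t\rho\geq 0$ in the sense of distributions. The point you missed is that while $\partial_t\rho_{\gamma_k}$ is not nonnegative, its negative part is $O(\gamma_k^{-1/2})$ in $L^1$ thanks to the AB machinery already developed in Section~\ref{sec:estimates}.
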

\begin{proof}
 The convergence properties that we have are strong enough to guarantee that $(\rho_1, \ldots, \rho_{\ell}, p, n)$ is a weak solution to the equations
 \[
 \partial_t \rho_i-\nabla\cdot (\rho_i\nabla p)=\rho_iG_i(p,n)
 \]
 \[
 \partial_t n-\alpha\Delta n=\-\sum_{i=1}^{\ell}\beta_i\rho_i.
 \]
 To prove that $(\rho_1, \ldots, \rho_{\ell}, p, n)$ is a solution to the incompressible system (\ref{eq:incompressible}-\ref{eq:incompressible_nutrients}) we still need to show that $\rho\leq 1$ and $p(1-\rho)=0$ almost everywhere.  Since $p_{\gamma_k}\leq p_h$ almost everywhere, it follows that $\rho_{\gamma_k}\leq p_h^{\frac{1}{\gamma_k}}$ almost everywhere.  Therefore, $\rho\leq 1$ almost everywhere. 
Fix some $\epsilon>0$ and some set $E\subset Q_T$ with finite measure.   We can then compute
\[
\int_{E} p(t,x)(1-\rho(t,x))\,dx\, dt=\lim_{k\to\infty} \int_{E} p_{\gamma_k}(t,x)(1-\rho_{\gamma_k}(t,x))
\]
\[
\leq \lim_{k\to\infty} \int_{E} \epsilon p_{\gamma_k}+(1-\epsilon)^{\gamma_k}\leq \epsilon\norm{p}_{L^1(E)},
\]
where the first inequality follows from splitting $E$ into the sets $\{(t,x)\in E: \rho_{\gamma_k}<1-\epsilon\}$ and $\{(t,x)\in E: \rho_{\gamma_k}\geq 1-\epsilon\}$.  Sending $\epsilon\to 0$ we can conclude that $p(1-\rho)=0$ almost everywhere.

Now that we know that $(\rho_1, \ldots, \rho_{\ell}, p, n)$ satisfies (\ref{eq:incompressible}-\ref{eq:incompressible_nutrients}) we can glean some more information.  Summing (\ref{eq:incompressible}) over the populations, we see that $\rho, p$ are weak solutions of the equation
\begin{equation}\label{eq:hele_shaw}
\partial_t \rho-\nabla \cdot(\rho\nabla p)=\rho G\, \quad p(1-\rho)=0, \quad \rho \leq 1,
\end{equation}
which is the Hele-Shaw equation with a source term.

By Proposition \ref{prop:u_l1}, we know that $\int_{\RR^d\times \{t\}} \rho\log(1+\frac{1}{p})^{\lambda'}$ is bounded for some $\lambda'>0$ and any $t\geq 0$.  Thus, $\{x\in \RR^d: \rho(t,x)>0, p(t,x)=0\}$ must have measure zero for all $t\geq 0$.

To see that $\rho$ is nondecreasing in time, we note that equation (\ref{eq:pme}) gives
\[
\partial_t \rho_{\gamma_k}+\frac{1}{\gamma_k}\rho_{\gamma_k}u_{\gamma_k}=\nabla \rho_{\gamma_k}\cdot\nabla p_{\gamma_k}.
\]
We then have the trivial inequalities
\[\frac{1}{\gamma_k}\rho_{\gamma_k}u_{\gamma_k}\leq \frac{1}{\gamma_k}\rho_{\gamma_k}u_{\gamma_k\, +}\leq  \frac{1}{2\gamma_k^{1/2}}\rho_{\gamma_k}+\frac{1}{2\gamma_k^{1/2}}(\frac{1}{\gamma_k}\rho_{\gamma_k}u_{\gamma_k\, +}^2).
\]
Recalling that $\omega(p_{\gamma_k})=\gi\rho_{\gamma_k}$ is a valid weight satisfying (W1-W3), we know that $\frac{1}{\gamma_k}\rho_{\gamma_k}u_{\gamma_k\, +}^2$ is uniformly bounded with respect to $\gamma_k$ in $L^1(Q_T)$.  Hence, for any nonnegative test function $\psi$, we have 
\[
\int_{Q_T} \psi\partial_t \rho=\lim_{k\to\infty}\int_{Q_T} \psi\partial_t \rho_{\gamma_k}=\lim_{k\to\infty}\int_{Q_T} \psi\nabla \rho_{\gamma_k}\cdot \nabla p_{\gamma_k}\geq 0.
\]
Thus, $\partial_t \rho\geq 0$ almost everywhere in space and time.

Finally, the complementarity condition (\ref{eq:variational_complementarity}) is a consequence of the weak equation (\ref{eq:hele_shaw}) when the pressure has $L^2([0,T];H^1(\RR^d))$ regularity
see for instance \cite{pqv, perthame_david, guillen_kim_mellet, jacobs_2021} (one can also derive the condition from our control on $u$).

\end{proof}

The incompressible analogues of Lemma \ref{lem:flow_cauchy} and Propositions \ref{prop:flow_uniqueness} and \ref{prop:continuity_uniqueness} now all follow from the same proofs used in Section \ref{ssec:compactness_gamma}

 Finally we can prove Theorem \ref{thm:main2}.

\begin{proof}[Proof of Theorem \ref{thm:main2}]
We have already established that $(\rho_1,\ldots, \rho_{\ell}, p, n)$ is a solution to the incompressible system (\ref{eq:incompressible}-\ref{eq:incompressible_nutrients}). The strong convergence of the $X_k, Y_k$ to $X, Y$ on the support of $\rho$ implies that $X$ and $Y$ satisfy all the properties in Definition \ref{def:complete_lagrangian_solution} when restricted to the support of $\rho$.  Thus, $(\rho_1,\ldots, \rho_{\ell}, p, n)$ is a complete Lagrangian solution to the incompressible system (\ref{eq:incompressible}-\ref{eq:incompressible_nutrients}).

The proof of the nonmixing property is identical to the proof of the nonmixing property in Theorem \ref{thm:main1}.

\end{proof}

\bibliographystyle{alpha}
\bibliography{references}
\end{document}